\title[Adapted random perturbations N.U.E. maps]{Adapted random perturbations for non-uniformly expanding maps}
\date{\today}
\author{Vitor Araujo}
\address[V.A.]{Instituto de Matem\'atica,
Universidade Federal da Bahia\\
Av. Adhemar de Barros, S/N , Ondina,
40170-110 - Salvador-BA-Brazil}
\email{vitor.d.araujo@ufba.br}
\author{Maria Jose Pacifico}
\address[M.J.]{Instituto de Matem\'atica,
Universidade Federal do Rio de Janeiro\\
C. P. 68.530, 21.945-970 Rio de Janeiro}
\email{pacifico@im.ufrj.br}
\author{Mariana Pinheiro}
\address[M.P.]{Departamento de Ci\^encias Exatas e da Terra, Universidade Estadual de Santa Cruz, Ilh\'eus, Brazil.}
\email{mariana@im.ufrj.br}
\thanks{Authors were partially supported by CAPES, CNPq, FAPERJ and
  PRONEX/DS from Brazil.}
\subjclass{Primary: 37D25; Secondary: 37H99.}
 \keywords{stochastic stability,
  additive perturbation, non-uniform expansion, slow
  recurrence, adapted random perturbation, stationary
  measures, physical measures, first hyperbolic time map.}
\newtheorem{maintheorem}{Theorem}
\newtheorem{theorem}{Theorem }[section]
\newtheorem{proposition}[theorem]{Proposition}
\newtheorem{lemma}[theorem]{Lemma}
\newtheorem{corollary}[theorem]{Corollary}
\newtheorem{remark}[theorem]{Remark}
\newtheorem{definition}[theorem]{Definition}
\newtheorem{conjecture}{Conjecture}
\newcommand{\be} {\beta}
\newcommand{\ep} {\epsilon}
\newcommand{\vfi}{\varphi}
\newcommand{\Z}{\mathbb{Z}}
\newcommand{\N}{\mathbb{N}}
\newcommand{\sS}{\mathbb{S}}
\newcommand{\R}{\mathbb{R}}
\newcommand{\nat}{\mathbb{N}}
\newcommand{\T}{\mathbb{T}}
\newcommand{\un}{\underline}
\newcommand{\SB}{{\mathcal B}}
\newcommand{\SC}{{\mathcal C}}
\newcommand{\SD}{{\mathcal D}}
\newcommand{\SE}{{\mathcal E}}
\newcommand{\SF}{{\mathcal F}}
\newcommand{\supp}{\operatorname{supp}}
\begin{document}

\begin{abstract}
  We obtain stochastic stability of $C^2$ non-uniformly
  expanding one-dimensional endomorphisms, requiring only
  that the first hyperbolic time map be $L^{p}$-integrable
  for $p>3$. We show that, under this condition (which
  depends only on the unperturbed dynamics), we can
  construct a random perturbation that preserves the
  original hyperbolic times of the unperturbed map and,
  therefore, to obtain non-uniform expansion for random
  orbits.  This ensures that the first hyperbolic time map
  is uniformly integrable for all small enough noise levels,
  which is known to imply stochastic stability. The method
  enables us to obtain stochastic stability for a class of
  maps with infinitely many critical points. For higher
  dimensional endomorphisms, a similar result is obtained,
  but under stronger assumptions.
\end{abstract}

\maketitle
\tableofcontents

\section{Introduction}
\label{sec:introduction}

The main goal of Dynamical systems theory is the description
of the typical behaviour of orbits as time goes to infinity,
and to understand how this behaviour changes under small
perturbations of the system.

Given
a map $f$ from a manifold $M$ into itself, a central
concept is that of \emph{physical measure}, a
$f$-invariant probability measure $\mu$ whose
\emph{ergodic basin}
\begin{align}\label{eq:ergbasin}
  B(\mu)=\big\{x\in M: n^{-1}\sum_{j=0}^{n-1}\phi(f^j(x))
  \xrightarrow[n\to+\infty]{} \int\phi\,d\mu\mbox{ for all
    continuous } \phi: M\to\R\big\}
\end{align}
has positive \emph{volume} or \emph{Lebesgue measure}, which
we write $\lambda$ and take as the measure associated with any
non-vanishing volume form on $M$.

The stability of physical measures under small variations of
the map 
allows for small errors along orbits not to disturb too much
the long term behavior, as measured by 
asymptotic time averages of continuous functions along
orbits.  When considering practical systems we cannot avoid
external noise, so every realistic mathematical model should
exhibit these stability features to be able to deal with
uncertainty about parameter values, observed initial states
and even the specific mathematical formulation of the model
itself.

We investigate, under the probabilistic point of view, which
asymptotic properties of a dynamical system are preserved
under random perturbation.

Random perturbations and their features were first studied
in 1945 by Ulam and von Neumann, in \cite{UvN}.  The focus
of this work are non-uniformly expanding transformations
which were introduced by Alves-Bonatti-Viana in
\cite{ABV00}, and whose ergodic properties are now well
established; see for instance
\cite{Ze03,AA03,AlV13,AlLuPi03}. Here we show that the
asymptotic behavior of these transformations is preserved
when randomly perturbed in an adapted way to their first
times of expansion, under a condition: that the first time
of expansion is $L^p$-integrable with respect to Lebesgue
measure; see next sections for precise definitions and
statements.

The interest in this kind of stochastic stability condition
lies in the fact that \emph{known conditions of stochastic
  stability for non-uniformly expanding maps are expressed
  as conditions on the random perturbations of the map} and
not solely on the original unperturbed dynamics. We mention
the joint works with Alves \cite{AA03} and Vasquez
\cite{alarva}, and also the recent work by Alves and
Vilarinho \cite{AlV13}.

The uniformly hyperbolic case, studied by Kifer in
\cite{Ki88,Ki86} (among others), is much simpler: uniformly
hyperbolic systems are stochastically stable under a broad
range of random perturbations without further
conditions. Other cases with the same features, which we may
say are ``almost uniformly hyperbolic systems'', where
studied in joint works with Tahzibi, in \cite{ArTah,ArTah2}.

Here, we present a sufficient condition for stochastic
stability of non-uniformly expanding transformations that
relies only on the dynamics of the unperturbed map, for a
simple type of random perturbation that is adapted to the
dynamics. This allows us to treat some exceptional cases.

Recently Shen \cite{Shn13} obtained stochastic stability for
unimodal transformations under very weak assumptions, but
does not cover the case of transformations with infinitely
many critical points; and Shen together with van Strien in
\cite{ShvS13} obtained strong stochastic stability for the
Manneville-Pomeaux family of intermittent maps, answering
questions raised in \cite{ArTah}.

Our method allows us to obtain stochastic stability for
non-uniformly expanding endomorphisms having slow recurrence
to the critical set, encompassing the family of
infinite-modal applications presented in \cite{PRV98}.  We
also obtain stochastic stability (in the weak$^*$ sense, see
precise statements in the next sections) for intermittent
maps but in a restricted interval of parameter values; see
Section~\ref{sec:intermitent-maps}.


\subsection{Setting and statement of results}
\label{sec:statement-results}

We consider $M$ to be a $n$-torus, $\mathbb{T}^n=
(\mathbb{S}^{1})^n$, for some $n\ge1$ and $\lambda$ a
normalized volume form in $\mathbb{T}^n$, which we call
Lebesgue measure. This can be identified with the
restriction of Lebesgue measure on $\R^n$ to the unit cube.

We write $d$ for the standard distance function on $\T^n$ in
what follows and $\|\cdot\|$ for the standard Euclidean norm
on $\R^n$ which can be identified with the tangent space at
any point of $\T^n$.

We let $f:\T^n\to\T^n$ be a local $C^2$
diffeomorphism outside a \emph{non-degenerate critical set}
$\SC$, that is, $\SC=\{x\in M:\det Df(x)=0\}$ and $f$
behaves as the power distance to $\SC$: there are constants
$B>1$ and $\beta>0$ satisfying
\begin{itemize}
\item[(S1)]
  $\frac{1}{B}\cdot d(x,\SC)^{\beta}\leq
  \frac{\|Df(x)\cdot v\|}{\|v\|}
  \leq
  B \cdot d(x,\SC)^{-\beta}, \vec0\neq v\in T_x M$;
\item[(S2)]$\big|\log\| Df(x)^{-1}\|-\log\|Df(y)^{-1}\|\big|
 \leq
  B\frac{ d(x,y)}{d(x,\SC)^{\beta}}$;
 \item[(S3)]
$\big|\log|\det Df(x)^{-1}|-
\log|\det Df(y)^{-1}|\:\big|\leq
B\frac{d(x,y)}{d(x,\SC)^{\be}}$;
\end{itemize}
for all $x,y\in M\setminus\SC$ with
$\d(x,y)<\frac12\d(x,\SC)$.

We say that $f$ is \emph{non-uniformly expanding} if there
is a constant $c>0$ such that:
\begin{align}\label{nue}
\limsup_{n\to+\infty}\frac1n\sum_{j=0}^{n-1}
\log\| Df(f^{j}(x))^{-1}\|
\leq-c<0 \quad\text{for}\quad \lambda-\text{a.e.  }\,x\in M.
\end{align}
We need to control the recurrence to the critical set in
order to obtain nice ergodic properties. We say that $f$ has a
\emph{slow recurrence to critical set} if, for any given $\gamma>0$
there exists $\delta>0$ such that
\begin{align}\label{eq:slowrec}
  \limsup_{n\to+\infty}\frac1n
  \sum_{j=0}^{n-1}-\log d_{\delta}(f^{j}(x),\SC)
  \leq \gamma, \quad\text{for}\quad \lambda-\text{a.e.  }\,x\in M,
\end{align}
where $d_\delta$ is the \emph{$\delta$-truncated distance to
$\SC$}, defined as $d_{\delta}(x,\SC)=d(x,\SC)$ if
$d(x,\SC)<\delta$ and $d_{\delta}(x,\SC)=1$ otherwise.

We recall the concept of physical measure.
For any $f$-invariant probability measure $\mu$ we write
$B(\mu)$ for the \emph{basin} of $\mu$ as in~(\ref{eq:ergbasin}).
We say that a $f$-invariant measure $\mu$ is \emph{physical}
if its basin $B(\mu)$ has positive Lebesgue measure:
$\lambda(B(\mu))>0$.  

Roughly speaking, physical measures are those that can be
``seen'' by calculating the time average of the values of a
continuous observable along the orbits the points on a
subset with positive Lebesgue measure.  Clearly Birkhoff's
Ergodic Theorem ensures that $\mu(B(\mu))=1$ whenever $\mu$
is $f$-ergodic. We note that every $f$-invariant ergodic
probability measure $\mu$ which is also absolutely
continuous with respect to Lebesgue measure,
i.e. $\mu\ll\lambda$, is a physical measure.

The previous conditions on $f$ ensure that Lebesgue almost
all points behave according to some physical measure.

\begin{theorem}[Theorem C, \cite{ABV00}]
  \label{thm:abv}
  Let $f$ be $C^2$ diffeomorphism away from a non-degenerate
  critical set, which is also a non-uniformly expanding map
  whose orbits have slow recurrence to the critical
  set. Then there is a finite number of $f$-invariant
  absolutely continuous ergodic (physical) measures
  $\mu_{1}, \ldots,\mu_{p}$ whose basins cover a set of full
  measure, that is
  $\lambda\big(M\setminus(B(\mu_1)\cup\dots\cup
  B(\mu_p)\big)=0$.  Moreover, each $f$-invariant absolutely
  continuous probability measure $\mu$ can be written as a
  convex linear combination the physical measures: there are
  $\alpha_1=\alpha_1(\mu),\dots,\alpha_p=\alpha_p(\mu)\ge0$
  such that $\sum\alpha_i=1$ and $\mu=\sum\alpha_i\mu_i$.
\end{theorem}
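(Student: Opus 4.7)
The plan is to follow the hyperbolic-times approach. Say that $n$ is a $\sigma$-hyperbolic time for $x$ when $\prod_{k=n-j}^{n-1}\|Df(f^k(x))^{-1}\|\le e^{-\sigma j}$ for every $1\le j\le n$, together with the recurrence bound $d_\delta(f^{n-j}(x),\SC)\ge e^{-\gamma j}$ for all $0\le j<n$. Applying Pliss's Lemma to the non-uniform expansion~(\ref{nue}) and to the slow-recurrence condition~(\ref{eq:slowrec}) shows that for $\lambda$-a.e.\ $x$ the set of $\sigma$-hyperbolic times has positive upper density, for suitable $\sigma\in(0,c)$ and $\gamma,\delta>0$ sufficiently small. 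The crucial technical input, drawing on (S1)--(S3), is the \emph{hyperbolic pre-ball} property: at each such hyperbolic time $n$ of $x$ there is a neighbourhood $V_n(x)$ carried by $f^n$ diffeomorphically onto a ball $B(f^n(x),\delta_1)$ of radius $\delta_1>0$ independent of $n$ and $x$, with uniform backward contraction and uniform bounded distortion for $|\det Df^n|$.

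With these tools in hand I would build an absolutely continuous $f$-invariant probability as a weak$^*$ accumulation point $\mu$ of the Ces\`aro averages $\mu_n = \tfrac1n\sum_{j=0}^{n-1} (f^j)_*\lambda$. On a hyperbolic pre-ball, bounded distortion forces the push-forward $(f^n)_*(\lambda|_{V_n(x)})$ to have density uniformly bounded above on $B(f^n(x),\delta_1)$; combined with the positive density of hyperbolic times on a $\lambda$-full-measure set, this allows one to extract from $\mu$ an absolutely continuous component of definite positive mass. Its $f$-invariance together with the Lebesgue decomposition then yield an absolutely continuous $f$-invariant probability measure.

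Ergodic decomposition of this measure produces absolutely continuous ergodic measures $\mu_1,\dots,\mu_p$, each physical by Birkhoff's ergodic theorem. Finiteness of their number follows because each basin $B(\mu_i)$ must contain, up to a $\lambda$-null set, a full ball of the uniform radius $\delta_1/2$ inherited from a hyperbolic pre-ball, and only finitely many disjoint such balls fit inside the compact manifold $\T^n$. The union $\bigcup_i B(\mu_i)$ has full Lebesgue measure because every point with positive density of hyperbolic times falls, modulo a null set, into one of these balls and hence into some basin. The convex-combination claim follows by applying the ergodic decomposition to any absolutely continuous $f$-invariant probability $\mu$: its ergodic components are themselves absolutely continuous, hence must coincide with some of the $\mu_i$. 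I expect the main obstacle to lie in the bounded-distortion control on hyperbolic pre-balls near $\SC$: one must balance, through (S2)--(S3) and the slow-recurrence bound, the exponential backward contraction along hyperbolic pre-orbits against the polynomial blow-up of $Df^{-1}$ as one approaches the critical set, in order to secure uniform distortion on pre-balls of a uniform radius $\delta_1$.
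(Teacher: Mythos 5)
The paper does not give a proof of this statement; it is quoted verbatim as Theorem~C of \cite{ABV00} and used as a black box. Your sketch reproduces the hyperbolic-times strategy of that reference correctly: Pliss-type selection of $(\sigma,\delta)$-hyperbolic times from the non-uniform expansion and slow-recurrence hypotheses, uniform hyperbolic pre-balls with bounded volume distortion via (S1)--(S3), extraction of an absolutely continuous invariant component from weak$^*$ limits of Ces\`aro push-forwards of Lebesgue, ergodic decomposition, and a packing/compactness argument to bound the number of ergodic pieces. The only place where your wording is slightly too strong is the finiteness step: a single hyperbolic time only gives a ball $B(f^n(x),\delta_1)$ covered by the basin up to an arbitrarily small (not null) proportion as $n\to\infty$; to obtain a fixed ball covered mod null you must pass to a convergent subsequence of the moving centres $f^n(x)$. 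With that standard compactness remark inserted, the argument is essentially the one in \cite{ABV00}.
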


\begin{remark}\label{rmk:pinheiro}
  Pinheiro~\cite{Pinheiro05} showed that the same
  conclusions of Theorem \ref{thm:abv} can be obtained by
  replacing the of non-uniform expansion condition
  (\ref{nue}) by the weaker condition
  \begin{eqnarray}\label{liminf}
    \liminf_{n\to+\infty}\frac1n\sum_{j=0}^{n-1}
    \log\| Df(f^{j}(x))^{-1}\|\leq
    -c<0,  \quad\text{for}\quad \lambda-\text{a.e.  }\,x\in M.
  \end{eqnarray} 
  The proof of this fact involves showing that
  (\ref{liminf}) implies (\ref{nue}). Therefore, all the
  arguments used in this paper remain valid in the more
  general setting of condition \eqref{liminf} replacing
  condition~(\ref{nue}).
\end{remark}

\subsection{Random perturbations and stochastic stability}
\label{sec:random-perturb-stoch}

We let $B=B(0,1)$ denote the unitary ball centered at the
origin $0$ in $\R^n$, set $X=\overline{B}$ and $\mathcal{F}=\{
f_{t}:M\to M; t\in X\}$ a parametrized family of maps. We
write $f_t(x)=f(t,x),(t,x)\in X\times M$ and assume in what
follows that $f_{0}=f$ is a map in the setting of
Theorem~\ref{thm:abv}.

We consider also the family of probability measures
$(\theta_{\epsilon})_{\epsilon>0}$ in $X$ given by the
normalized restriction of Lebesgue measure to the
$\epsilon$-ball $B(0,\epsilon)$ centered at $0$ in $\R^n$,
as follows
\begin{align}\label{eq:theta_ep}
  \theta_{\epsilon}=\frac{\lambda\mid_{B(0,\epsilon)}}{\lambda(B(0,\epsilon))}.
\end{align}
This family is such that $\supp(\theta_{\epsilon})_{\epsilon>0}$ is a nested
family of compact and convex sets satisfying
$\supp(\theta_{\epsilon})\xrightarrow[\epsilon\rightarrow0]{}0$.
Setting $\Omega=X^\N$ the space of sequences in $X$,
the random iteration of $\SF$ is defined by
\begin{align*}
  f^{n}_{\underline{t}}(x)=\left(f_{t_{n}}\circ
  f_{t_{n-1}}\circ\ldots\circ f_{t_{1}}\right)(x), \quad
\underline{t}=(t_{1},t_{2},\ldots)\in\Omega, x\in M.
\end{align*}
To define the notions of stationary and ergodic measure we
consider the skew-product
\begin{align}\label{continuidade dupla}
  \begin{array}{cccc}F:&\Omega\times M&\rightarrow& \Omega\times M\\
    &(\underline{t},x)&\mapsto&(\sigma(\underline{t}),f_{t_{1}}(x))
  \end{array}
\end{align}
where $\sigma:\Omega\rightarrow \Omega$ is a standard left
shift, and the infinite product measure $\theta_\epsilon^\N$
on $\Omega$, which is a probability measure on the Borel
subsets of $\Omega$ in the standard product topology.

From now on, for each $\epsilon>0$, we refer to
$(f_{\underline{t}},\theta_{\epsilon}^{\mathbb{N}})$ as a
\emph{random dynamical system} with noise of level $\epsilon$.

\begin{definition} [Stationary measure]
\label{eq:defstationary}
  A measure $\mu^\epsilon$ is a {\it stationary measure} for the
  random system
  $(f_{\underline{t}},\theta_{\epsilon}^{\mathbb{N}})$ if
  \begin{align*}
    \int\phi\,d\mu^\epsilon =
    \int\int\phi(f_{t}(x))\,d\mu^\epsilon(x)\,d\theta_{\epsilon}(t),
    \quad\text{for all}\quad \phi\in C^0(M,\R).
  \end{align*}
\end{definition}

\begin{remark}
  \label{re:accinvariant}
  If $(\mu^\epsilon)_{\epsilon>0}$ is a family of stationary
  measures having $\mu^0$ as a weak$^*$ accumulation point
  when $\ep\searrow0$, then from~(\ref{eq:defstationary})
  and the convergence of $\supp(\theta_\epsilon)$ to $\{0\}$
  it follows that $\mu^0$ must be invariant by $f=f_{0}$;
  see e.g. \cite{AA03}.
\end{remark}

We say that $\mu$ is a \emph{stationary measure} if the
measure $\theta_{\epsilon}^{\mathbb{N}}\times\mu$ is
$F$-invariant.  Moreover, we say that \emph{a stationary
  measure $\mu$ is ergodic} if
$\theta_{\epsilon}^{\mathbb{N}}\times\mu$ is $F$-ergodic. 



\begin{definition}
  We say that $f$, in the setting of
  Theorem~\ref{thm:abv}, is {\em stochastically stable under
  the random perturbation given by} $(f_{\un
    t},\theta_\epsilon^\N)_{\epsilon>0}$ if, for all
  weak$^*$ accumulation points $\mu^0$ of families
  $(\mu^\epsilon)_{\epsilon>0}$ of stationary measures for
  the random dynamical system $(f_{\un
    t},\theta_\epsilon^\N)$ when $\epsilon\searrow 0$, we
  have that $\mu^0$ belongs to the closed convex hull of
  $\{\mu_{1},\ldots,\mu_p\}$. That is, for all such weak$^*$
  accumulation points $\mu^0$ there are
  $\alpha_1=\alpha_1(\mu^0),\dots,\alpha_p=\alpha_p(\mu^0)\ge0$
  such that $\sum\alpha_i=1$ and $\mu^0=\sum\alpha_i\mu_i$.
\end{definition}

In this work we consider additive perturbations given by
families of maps with the following form
\begin{eqnarray}\label{pertaditiva}
  f_{t}(x)=f(x)+t\zeta(x)
\end{eqnarray}
where $\zeta:M\rightarrow\mathbb{R}^{+}$ is Borel measurable
and locally constant at $\lambda$-almost every point.

\begin{remark}\label{rmk:Dft_Df} 
  For such additive perturbations we have $Df_{t}(x)=Df(x)$
  for all $t\in\Omega$ and $\lambda$-a.e. $x\in M$.
\end{remark}


\subsection{First hyperbolic time map and adapted random
  perturbations}
\label{sec:hyperb-times-random}

The following is the fundamental concept in this work.

\begin{definition}[Hyperbolic time]
\label{def:hyptimes}
Given $\sigma<1$ and $\delta>0$, we say that $n$ is a
$(\sigma,\delta)$-hyperbolic time for $x\in
M$ if
\begin{align*}
  \prod_{j=n-k}^{n-1}\| Df(f^{j}(x))^{-1}\| \leq \sigma^{k}
  \quad\mbox{and}\quad d_{\delta}(f^{n-k}(x),\SC)\geq
  \sigma^{bk}\quad\text{for all}\quad 1\leq k\leq n,
\end{align*}
where $b=\min\{1/2, 1/2\beta\}$ and $\beta$ is the constant
given in the non-degenerate conditions (S1)-(S2).
\end{definition}

The notion of hyperbolic times was defined in \cite{ABV00}.
To explain our Main Theorem we cite the following technical
result.

\begin{lemma}[Lemma 5.4 in \cite{ABV00}]\label{le:infhyptimes}
  Let $f$ be a $C^2$ local diffeomorphism away from a
  non-degenerate critical set, which satisfies the
  non-uniform expansion condition $(\ref{nue})$ with
  $c=3\log\sigma$ for some $0<\sigma<1$ and also the slow
  recurrence condition~(\ref{eq:slowrec}). 

  Then there exist $\theta_{0},\delta>0$ depending on
  $\sigma$ and $f$, such that for $\lambda$-a.e. $x$ and
  each big enough $N\geq 1$, there are
  $(\sigma,\delta)$-hyperbolic times $1\leq n_{1}<
  \dots<n_{l}\leq N$ for $x$ with
  $l\geq\theta_{0}N$. Moreover, the hyperbolic times $n_{i}$
  satisfy
\begin{align}\label{eq:hip-times-prop}
\sum_{j=n_{i}-k}^{n_{i}-1}\log d_{\delta}(f^{j}(x),\SC)
\geq bk\log\sigma,
\quad\text{for all}\quad 0\leq k\leq n_{i}, 1\leq i\leq l.
\end{align}
\end{lemma}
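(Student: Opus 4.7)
The plan is to apply the Pliss Lemma twice—once to extract the backward expansion inequality, once the backward recurrence inequality—and intersect the two resulting sets of good times. Fix $\gamma>0$ small compared to both $b|\log\sigma|$ and $|\log\sigma|/\beta$; the precise smallness will emerge from the Pliss bookkeeping. By \eqref{eq:slowrec} there is $\delta>0$ such that the Birkhoff averages of $-\log d_\delta(\cdot,\SC)$ are eventually $\le\gamma$ almost everywhere, while \eqref{nue} forces the averages of $\log\|Df(f^j(x))^{-1}\|$ eventually $\le 2\log\sigma$. Thus, for $\lambda$-a.e.\ $x$ and all sufficiently large $N$, both
\[
\tfrac1N\sum_{j=0}^{N-1}\log\|Df(f^j(x))^{-1}\|\le 2\log\sigma
\quad\text{and}\quad
\tfrac1N\sum_{j=0}^{N-1}\log d_\delta(f^j(x),\SC)\ge -2\gamma
\]
hold simultaneously.

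For the moreover-inequality \eqref{eq:hip-times-prop} we apply Pliss to $a_j=\log d_\delta(f^j(x),\SC)+b|\log\sigma|$, which is bounded above by $H_1=b|\log\sigma|$ and has partial average $\ge H_1-2\gamma$; Pliss (with lower threshold $0$) returns a set $S_1\subset\{1,\dots,N\}$ of density $\ge\theta_1:=1-2\gamma/(b|\log\sigma|)$ on which $\sum_{j=n-k}^{n-1}a_j\ge 0$ for every $1\le k\le n$. This rearranges exactly to \eqref{eq:hip-times-prop}, and since $\log d_\delta\le 0$, the identity $\log d_\delta(f^{n-k}(x),\SC)=\sum_{j=n-k}^{n-1}\log d_\delta(f^j(x),\SC)-\sum_{j=n-k+1}^{n-1}\log d_\delta(f^j(x),\SC)$, together with non-positivity of the subtracted term, immediately yields the pointwise recurrence bound $d_\delta(f^{n-k}(x),\SC)\ge\sigma^{bk}$ required by the definition of $(\sigma,\delta)$-hyperbolic time.

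For the product condition we apply Pliss to $a'_j=-\log\|Df(f^j(x))^{-1}\|+\beta\log d_\delta(f^j(x),\SC)$. The key observation is that by (S1) one has $-\log\|Df(x)^{-1}\|\le\log B-\beta\log d(x,\SC)$, so the near-critical blow-up of the first term is exactly absorbed by the $\beta\log d_\delta$ correction, making $a'_j\le H_2:=\log B+\beta|\log\delta|$ uniformly bounded above. Its partial average exceeds $2|\log\sigma|-2\beta\gamma$, and Pliss with lower threshold $|\log\sigma|$ returns $S_2$ of density $\ge\theta_2:=(|\log\sigma|-2\beta\gamma)/(H_2-|\log\sigma|)$ on which $\sum_{j=n-k}^{n-1}a'_j\ge k|\log\sigma|$; using once more $\log d_\delta\le 0$, this gives $\prod_{j=n-k}^{n-1}\|Df(f^j(x))^{-1}\|\le\sigma^k$.

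Every $n\in S_1\cap S_2$ is therefore a $(\sigma,\delta)$-hyperbolic time satisfying \eqref{eq:hip-times-prop}, and the intersection has density $\ge\theta_0:=\theta_1+\theta_2-1$. The main obstacle is ensuring $\theta_0>0$, i.e.\ $2\gamma/(b|\log\sigma|)<\theta_2$; since $H_2$ depends on $\delta=\delta(\gamma)$ (itself produced by slow recurrence), this is a joint smallness condition coupling $\gamma$ and $\gamma\cdot|\log\delta(\gamma)|$, and its verification is the delicate step. Once satisfied, the resulting $\theta_0$ is a positive constant depending only on $\sigma$ and $f$, producing at least $\theta_0 N$ hyperbolic times with the asserted backward-sum property.
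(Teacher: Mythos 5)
The paper cites this as Lemma 5.4 of \cite{ABV00} without reproducing a proof, so the comparison is with the ABV argument itself. Your strategy --- two applications of the Pliss lemma, one for the backward recurrence sums and one for the backward expansion sums, followed by intersection --- is indeed the route taken in \cite{ABV00} and in the subsequent literature, and your two individual Pliss set-ups are correct: the non-positivity of $\log d_\delta$ is used correctly to pass from the running-sum bound \eqref{eq:hip-times-prop} to the pointwise bound $d_\delta(f^{n-k}(x),\SC)\ge\sigma^{bk}$, and the correction term $\beta\log d_\delta$ in $a'_j$ is exactly what condition (S1) provides to cap the otherwise unbounded sequence $-\log\|Df(f^j(x))^{-1}\|$.

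The genuine gap is precisely the one you flag and then leave unresolved. Your upper bound for the expansion Pliss sequence is $H_2=\log B+\beta|\log\delta|$, and the resulting density is $\theta_2=(|\log\sigma|-2\beta\gamma)/(H_2-|\log\sigma|)$. But $\delta$ is the constant produced by the slow-recurrence hypothesis \emph{for the chosen} $\gamma$, and there is no a priori control on how fast $\delta(\gamma)$ may shrink as $\gamma\to0$; nothing in the hypotheses excludes, say, $|\log\delta(\gamma)|\gg 1/\gamma$, in which case $\gamma H_2(\gamma)$ is unbounded and the required inequality $\theta_1+\theta_2>1$ fails for every choice of $\gamma$. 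In other words, the positivity of $\theta_0$ is not a matter of bookkeeping but requires decoupling the upper bound in the expansion Pliss from the $\delta$ supplied by slow recurrence. The factor $3$ in $c=3\log\sigma^{-1}$ (note the sign typo in the statement: $c$ must be positive, so $c=-3\log\sigma$) is the slack that the original proof exploits for exactly this purpose, but it does not enter your density estimate, and simply invoking it will not close the gap as your computation stands. As written, the proposal establishes the two Pliss sets and the ``moreover'' inequality \eqref{eq:hip-times-prop} on each of them, but not the positive-density intersection; you should either reproduce ABV's decoupling of the constants or supply your own argument that a suitable $\gamma$ exists.
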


\begin{remark}\label{rmk:infhyptimes}
  Let $\mathcal{G}$ the set of points $x\in M$ that have no
  hyperbolic time. Then $\lambda(\mathcal{G})=0$ after Lemma
  $\ref{le:infhyptimes}$. Thus, if $x$ has only finitely
  many hyperbolic times, then some iterate of $x$ belongs to
  $\mathcal{G}$. Hence, the subset of points with finitely
  many hyperbolic times is contained in
  $\cup_{j\ge0}f^{-j}(\mathcal G)$. Moreover,
  $\lambda(f^{-j}(\mathcal G))=0$ because $f$ is a local
  diffeomorphism away from a critical/singular set with zero
  $\lambda$-measure.  Therefore, $\lambda$-a.e. $x\in M$ has
  infinitely many hyperbolic times.
\end{remark}

Hence, in our setting we have that Lebesgue almost every
point has infinitely many $(\sigma,\delta)$-hyperbolic
times. Thus we may define the map $h:M\to\Z^{+}$ such that
for $\lambda$-a.e. point $x$ the positive integer $h(x)$ is
the first hyperbolic time of $x$. We say $h$ is the {\it
  first~hyperbolic~time~map}.

In our main theorem, we will see that is possible to
randomly perturb a non-uniformly expanding map so that
almost all randomly perturbed orbits have infinitely many
hyperbolic times but also the same hyperbolic times as
the non-perturbed map. We start with a one-dimensional version.

\begin{maintheorem}\label{principal}
  Let $f:\T^1\to\T^1$ be a  $C^2$ diffeomorphism away from a
  non-degenerate critical set, which is also a non-uniformly
  expanding map whose orbits have slow recurrence to the
  critical set.  
  Let us assume that $f$ has a dense orbit and that the
  first hyperbolic time map is $L^p$-integrable for some
  $p>3$, that is, $\int h(x)^p\,d\lambda(x)<\infty$. 

  Then $f$ is stochastically stable for a family of adapted
  random perturbations.
  More precisely, there exists $\zeta:\T^1\rightarrow
  \mathbb{R}^{+}$ mensurable and locally constant such that
  the family $(\ref{pertaditiva})$ generates a family of
  random perturbations $(f_{\un
    t},\theta_\epsilon^\N)_{\epsilon>0}$ for which $f$ is
  stochastically stable.
\end{maintheorem}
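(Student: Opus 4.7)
The plan is to construct the noise function $\zeta$ so that, for all small enough $\epsilon>0$ and $\theta_\epsilon^\N$-almost every sequence $\un t$, the randomly iterated orbit $(f^{j}_{\un t}(x))_{j\ge0}$ inherits the $(\sigma,\delta)$-hyperbolic times of the unperturbed orbit $(f^{j}(x))_{j\ge0}$, for Lebesgue-a.e. $x\in\T^1$. Once this adapted property is in place, the first hyperbolic time map $h_\epsilon$ of the random system is pointwise dominated by $h$, so the hypothesis $\int h^p\,d\lambda<\infty$ transfers uniformly in $\epsilon$, and stochastic stability follows from the criterion used in \cite{AA03,alarva,AlV13}: any weak$^*$ accumulation of stationary measures is an absolutely continuous $f$-invariant probability, which by Theorem~\ref{thm:abv} is a convex combination of $\mu_1,\dots,\mu_p$.

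The construction of $\zeta$ is the core technical step. Exploiting the dense $f$-orbit to guarantee that sets of positive Lebesgue measure admit refinements avoiding neighborhoods of $\SC$, I would fix a countable Borel partition $\{P_k\}_{k\ge1}$ of $\T^1\setminus\SC$ (mod~$\lambda$) such that each $P_k$ is an interval on which $f$ is a diffeomorphism, $\dist(P_k,\SC)\ge\delta_k>0$ for some strictly positive sequence $\delta_k$, and the diameter of $P_k$ is small enough that the distortion estimates (S1)--(S3) apply along every backward branch of $f$ ending inside $P_k$. On each $P_k$ I would then set $\zeta\equiv c_k>0$ with $c_k\searrow0$ fast enough that the pointwise displacement $t\cdot c_k$, for $|t|\le1$, stays inside the ``hyperbolic disk'' associated with any later hyperbolic-time visit to $P_k$. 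Since $Df_{t}=Df$ by Remark~\ref{rmk:Dft_Df}, I can then bound the cumulative deviation $|f^{j}_{\un t}(x)-f^{j}(x)|$ at a deterministic hyperbolic time $n$ by a geometric sum of the form $\sum_{j<n}\epsilon\,c_{k(j)}\,\sigma^{(n-j)/2}$ coming from backward contraction at hyperbolic times, and this sum can be made smaller than any preassigned stability radius by shrinking $\epsilon$. With both the expansion inequality and the truncated-distance estimate in Definition~\ref{def:hyptimes} surviving (with slightly worse constants $\sigma'<1$ and $\delta'>0$), I conclude $h_\epsilon\le h$ $\lambda$-a.e.

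The final step is comparatively soft: from the uniform bound $\int h_\epsilon^p\,d\lambda\le\int h^p\,d\lambda<\infty$ with $p>3$, I invoke the bounded-distortion estimate along hyperbolic preballs to obtain a uniform lower bound for the densities of absolutely continuous stationary measures $\mu^\epsilon$; tightness and absolute continuity of any weak$^*$ limit, together with Remark~\ref{re:accinvariant} and Theorem~\ref{thm:abv}, then close the argument in the manner of \cite{AA03,AlV13}. The main obstacle will be the joint calibration in the second paragraph: keeping $\zeta$ genuinely positive $\lambda$-almost everywhere while making it so small that the exponentially shrinking stability tubes at hyperbolic times are never exited. The possibly infinite-modal structure of $\SC$, which is needed to cover the examples from \cite{PRV98}, is what forces both the countable partition $\{P_k\}$ and the sequence $\{c_k\}$ to be tuned simultaneously against the distance to $\SC$ and against the hyperbolic-time geometry.
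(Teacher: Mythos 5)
The core of the paper's construction is the \emph{adapted hyperbolic time}
$H(y)=\sup\{h(z)-l: f^l(z)=y,\ l\ge 0\}$, and the noise amplitude is taken as
$\zeta(y)=\xi\,\omega^{-\eta H(y)^2}$: super-exponentially small in $H(y)$. This is the
quantity that makes the preservation argument close, because when a hyperbolic time $n$
for some ancestor $z$ of $y=f^j(z)$ lies $n-j$ iterates in the future, the hyperbolic
preball radius around $y$ is of order $\sigma^{(n-j)/2}$, and by definition $H(y)\ge n-j$,
so $\zeta(y)$ is automatically dwarfed by that radius. Your proposal instead sets
$\zeta\equiv c_k$ on intervals $P_k$ calibrated against $\dist(P_k,\SC)$. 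That is the wrong
control parameter: points far from $\SC$ can perfectly well have $H$ arbitrarily large (the
orbit of some backward branch can linger in a slowly expanding or recurrent region before
reaching its first hyperbolic time, with $y$ lying several iterates in), so a fixed $c_k$
attached to a block bounded away from $\SC$ will exceed the required preball radius
$\delta_1\sigma^{(n-j)/2}$ for large $n-j$. In fact, for the intermittent maps
$T_\alpha$ of Section~\ref{sec:intermitent-maps} one has $\SC=\emptyset$, so your partition
degenerates to a single block and $\zeta$ becomes a constant, which visibly cannot respect the
arbitrarily long stability windows near the neutral fixed point.

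There is also a sign error in the propagation estimate that masks this gap: you bound the
cumulative deviation at time $n$ by $\sum_{j<n}\epsilon\,c_{k(j)}\,\sigma^{(n-j)/2}$, citing
``backward contraction at hyperbolic times.'' Backward contraction controls how errors
\emph{shrink} going backward from the hyperbolic iterate, but noise injected at step $j$ is
carried \emph{forward} through $n-j$ expanding iterates, giving a factor of order
$\sigma^{-(n-j)/2}$ (see Remark~\ref{rmk:bdderivative}). The paper's
$\omega^{-\eta(H(x)-j)^2}$ decays faster than any such exponential growth, which is exactly
why the induction in Proposition~\ref{pr:rand-hyp-inside} closes; it also crucially uses the
monotonicity $H(f(y))\ge H(y)-1$ of \eqref{eq:adapted-hyp-time} and the local constancy of $H$
(Lemma~\ref{le:Hconst}) to propagate the bound along the random orbit. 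Finiteness of $H$
$\lambda$-a.e.\ is where the hypothesis $h\in L^p$, $p>3$, actually enters, through
Lemma~\ref{le:summa1d} and Lemma~\ref{tempo adaptado finito}. Your final ``soft'' step (uniform
integrability of $\hat h$, absolute continuity and invariance of weak$^*$ limits, then
Theorem~\ref{thm:abv}) does match the paper's use of the \cite{AA03} machinery, but it only
applies once the preservation of hyperbolic times is in place, and for that the noise must be
tuned against $H$, not against the distance to $\SC$.
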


The same proof gives the following result for endomorphisms
of compact manifolds in higher dimension, with a technical
assumption on the rate of decay of the measure of sets of
points with first hyperbolic time.

\begin{maintheorem}\label{mthm:higher}
  Let $f:\T^n\to\T^n$ be a  $C^2$ diffeomorphism away from a
  non-degenerate critical set, which is also a non-uniformly
  expanding map whose orbits have slow recurrence to the
  critical set, where $n>1$.  
  If the first hyperbolic time map satisfies
  \begin{align}\label{eq:summa}
    \sum_{n\geq
      1}\sum_{j=0}^{n-1}\lambda(f^{j}(h^{-1}(n)))<\infty,
  \end{align}
  then $f$ is stochastically stable for a family of adapted
  random perturbations given by (\ref{pertaditiva}).
\end{maintheorem}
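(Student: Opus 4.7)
The plan is to repeat the construction underlying Theorem~\ref{principal} essentially verbatim, replacing the one-dimensional distortion step that converts $L^{p}$-integrability of the first hyperbolic time map into a summable sequence by a direct appeal to the assumed summability (\ref{eq:summa}). Concretely, I would first build a Borel partition $\{P_{k}\}_{k\ge 1}$ of $\T^{n}$ modulo a zero-Lebesgue set (its existence being guaranteed by Remark~\ref{rmk:infhyptimes}), on which $h$ is constant, together with a locally constant function $\zeta:\T^{n}\to\R^{+}$ so small that for each $x\in P_{k}$ and each $0\le j\le h(x)-1$ an additive perturbation of $f^{j}(x)$ of size at most $\zeta(f^{j}(x))$ does not spoil any of the two inequalities defining a $(\sigma,\delta)$-hyperbolic time in Definition~\ref{def:hyptimes} up to time $h(x)$. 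This is feasible because the non-degenerate conditions (S1)--(S3), combined with the quantitative information supplied by Lemma~\ref{le:infhyptimes} (in particular the lower bound (\ref{eq:hip-times-prop})), give explicit room in the derivative and distance-to-$\SC$ factors; the required smallness on the pieces $f^{j}(P_{k})$ determines $\zeta$. Combined with the form (\ref{pertaditiva}) of the perturbation and Remark~\ref{rmk:Dft_Df}, this produces a family $(f_{\underline{t}},\theta_{\epsilon}^{\N})_{\epsilon>0}$ adapted to the first hyperbolic time structure of $f$.

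The second step is to verify that, for every small $\epsilon>0$, the perturbed first hyperbolic time map $h_{\epsilon}(\underline{t},x)$ is defined and finite $\theta_{\epsilon}^{\N}\times\lambda$-almost everywhere and is uniformly integrable as $\epsilon\searrow0$. By construction, an initial condition $x$ with $h(x)=n$ can fail to have $n$ as a $(\sigma,\delta)$-hyperbolic time for the random orbit only if some iterate $f^{j}_{\underline{t}}(x)$ with $0\le j<n$ leaves the safe zone of $\zeta$ after being perturbed by noise of size $\epsilon$; by the locally constant choice of $\zeta$ and by Remark~\ref{rmk:Dft_Df}, the Lebesgue measure of the set of such $x$ is controlled by a constant multiple of $\lambda(f^{j}(h^{-1}(n)))$. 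Summing over $0\le j<n$ and over $n\ge 1$, the hypothesis (\ref{eq:summa}) together with a Borel--Cantelli argument produces a uniform bound on the $L^{1}$-norm of $h_{\epsilon}$ with respect to $\theta_{\epsilon}^{\N}\times\lambda$ independent of $\epsilon$.

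Once this uniform integrability is in hand, the argument closes by invoking the general stochastic stability criterion developed in \cite{AA03,AlV13}: uniform integrability of the perturbed first hyperbolic time map forces every weak$^{*}$ accumulation point $\mu^{0}$ of families of stationary measures to be absolutely continuous with respect to Lebesgue measure; Remark~\ref{re:accinvariant} makes $\mu^{0}$ into an $f$-invariant probability, and Theorem~\ref{thm:abv} then writes it as a convex combination of $\mu_{1},\dots,\mu_{p}$, which is exactly the notion of stochastic stability we want. The hard step is the first one in dimension $n>1$: whereas the one-dimensional proof of Theorem~\ref{principal} exploits the linear order on $\T^{1}$ and classical one-dimensional distortion to size the pieces $P_{k}$, in higher dimension the directions along which the additive noise acts must be coherent with the geometry of the forward images $f^{j}(P_{k})$ relative to $\SC$, and it is precisely the technical condition (\ref{eq:summa}) that absorbs this geometric complication by bounding the total Lebesgue mass of the forward orbits of the level sets of $h$ up to their hyperbolic time.
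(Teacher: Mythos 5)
Your sketch has the right overall shape---build an adapted noise amplitude $\zeta$, show hyperbolic times survive, and invoke \cite{AA03}---but it omits the central technical device and tries to replace it with a Borel--Cantelli argument that does not close. Because a point $y$ typically lies in $f^{j}(h^{-1}(n))$ for many pairs $(j,n)$, the smallness you wish to impose on $\zeta(y)$ is an infimum over all such pairs, and you must show it is positive. That is exactly what the adapted hyperbolic time $H(x)=\sup\{h(z)-l\colon x=f^{l}(z),\ l\ge0\}$ of Definition~\ref{th adapt} records: the paper sets $\zeta(x)=\xi\,\omega^{-\eta H(x)^{2}}$, and the summability hypothesis~\eqref{eq:summa} is used \emph{only} to prove, via~\eqref{controle do tempo adaptado} and Lemma~\ref{tempo adaptado finito}, that $H<\infty$ $\lambda$-a.e.\ so that $\zeta>0$ $\lambda$-a.e. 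The crucial virtue of $H$ is the monotonicity $H(f(x))\ge H(x)-1$ of~\eqref{eq:adapted-hyp-time}, which forces $\zeta(f^{j}(x))\le\xi\,\omega^{-\eta(H(x)-j)^{2}}$, small enough to confine the random orbit inside the nested neighbourhoods $V_{H(x)-j}(f^{j}(x))$ (Proposition~\ref{pr:rand-hyp-inside}). If you index $\zeta$ by $h$ instead, the monotonicity goes the \emph{wrong} way---$h(f(x))\le h(x)-1$, so $h(f^{j}(x))$ may be far smaller than $h(x)-j$ and $\zeta(f^{j}(x))$ correspondingly far too large---and there is no safe zone.

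Consequently, with $H$ in hand the preservation of hyperbolic times is deterministic, not probabilistic: Theorem~\ref{nuero} and Remark~\ref{rmk:hbdd} give $\hat h(\underline{t},x)=h(x)$ for \emph{every} $\underline{t}$ in the support and $\lambda$-a.e.\ $x$, so the uniform $L^{1}$-tail in~\eqref{c.unif} is immediate and no Borel--Cantelli argument is required. Your second step bounds the measure of the set of $x\in h^{-1}(n)$ whose random orbit escapes at step $j$ by ``a constant multiple of $\lambda(f^{j}(h^{-1}(n)))$''; but that set lives inside $h^{-1}(n)$, not inside $f^{j}(h^{-1}(n))$, and comparing the two measures would require exactly the kind of higher-dimensional distortion control that is unavailable here and that introducing $H$ is designed to sidestep.
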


\subsection{Comments and organization of the text}
\label{sec:organization-text}

The method of proof relies on showing that the random
adapted perturbation preserve hyperbolic times in such a way
that the first hyperbolic time map of the random system is
the same as the first hyperbolic time map of the original
system. In this way, we can use the main result of
\cite{AA03} to prove (weak$*$) stochastic stability.

This construction of the adapted random perturbation depends
on an assumption of integrability of the first hyperbolic
time map for one-dimensional non-uniformly expanding
maps. For higher dimensional maps,
condition~(\ref{eq:summa}) is needed and apparently much
difficult to check.

\begin{conjecture}
  \label{sec:higherdim}
  A non-uniformly expanding map having a sufficiently
  fast rate of decay of correlations satisfies the
  summability condition~(\ref{eq:summa}).
\end{conjecture}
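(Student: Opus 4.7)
The plan is to combine two ingredients: a pointwise volume-distortion bound for $f^j$ on the level set $h^{-1}(n)$, derived from the hyperbolic-time definition and from (S1) together with slow recurrence, and an exponential tail estimate for $\lambda(h^{-1}(n))$ extracted from the fast-decay-of-correlations hypothesis via Young-tower machinery.

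First, I would estimate each term of the double sum individually. The area formula gives
$$\lambda\bigl(f^j(h^{-1}(n))\bigr) \leq \int_{h^{-1}(n)} |\det Df^j(x)|\,d\lambda(x).$$
From (S1) one has $|\det Df(x)| \leq B^{\dim M}\, d(x,\SC)^{-\beta\,\dim M}$, so
$$|\det Df^j(x)| \leq B^{j\,\dim M} \prod_{k=0}^{j-1} d(f^k(x),\SC)^{-\beta\,\dim M}.$$
For $x\in h^{-1}(n)$ the hyperbolic-time definition forces $d_\delta(f^k(x),\SC)\geq \sigma^{b(n-k)}$ for $0\leq k\leq n-1$, and combined with the slow recurrence bound applied at $\lambda$-a.e.\ point one obtains a pointwise estimate $|\det Df^j(x)|\leq K^j$ on $h^{-1}(n)$ (away from a $\lambda$-negligible set), where $K=K(f,\gamma)$ depends on the slow recurrence constant, $B$, $\beta$ and $\dim M$. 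Hence $\lambda(f^j(h^{-1}(n)))\leq K^j\lambda(h^{-1}(n))$.

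Second, I would invoke the connection between decay of correlations and tail decay of return times in Young towers for non-uniformly expanding maps, as developed in \cite{AlLuPi03} and by Gou\"ezel, Melbourne and others. Sufficiently fast (say exponential or stretched-exponential) decay of correlations with respect to the SRB measure is known to force an exponential tail $\lambda(\{h>n\})\leq C\tau^n$. Quantitatively, one would select the decay-of-correlations rate fast enough that $K\tau<1$, which is what ``sufficiently fast'' is taken to mean in the conjecture. Combining the two bounds,
$$\sum_{n\geq 1}\sum_{j=0}^{n-1}\lambda\bigl(f^j(h^{-1}(n))\bigr) \leq \sum_{n\geq 1}\lambda(h^{-1}(n))\,\frac{K^n-1}{K-1} \leq \frac{C}{K-1}\sum_{n\geq 1}(K\tau)^n < \infty,$$
as required.

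The main obstacle is the second step: the converse implication ``fast decay of correlations $\Rightarrow$ fast tail of the first hyperbolic time'' is substantially more delicate than the forward direction and is currently known only under additional structural hypotheses (existence of an adequate Markov or Young tower, absence of polynomial bottlenecks, etc.). A secondary difficulty is that the Jacobian bound $|\det Df^j|\leq K^j$ is crude: precisely on $\{h>j\}$ the expansion has not yet been accumulated, so a sharper estimate exploiting this should be possible and would relax the rate of decay of correlations required by the argument, perhaps enough to make ``sufficiently fast'' compatible with the standard polynomial-or-better rates available for the classes of maps treated in this paper.
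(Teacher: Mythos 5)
First, be aware that the statement you are addressing is presented in the paper as an open conjecture: no proof is given there, so there is no argument of the authors to compare yours against, and your own text already concedes that what you wrote is a plan rather than a proof. Taking the plan on its merits, it has two genuine gaps. The one you acknowledge --- that a sufficiently fast rate of decay of correlations forces a tail for $\lambda(h^{-1}(n))$ strong enough to be summed against the Jacobian growth --- is indeed open in the generality of the conjecture. The route the paper itself uses (Theorem~\ref{exemplo 2} together with Remark~\ref{rmk:Gamma_tail}, as in Section~\ref{sec:intermitent-maps}) converts correlation decay into large deviations and hence into tail bounds for $\Gamma_n$ and for $h$, but only at the polynomial rates the correlation decay delivers; it cannot produce an exponential tail with a prescribed rate.

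The unacknowledged gap is in your first step and is the more serious one. Slow recurrence \eqref{eq:slowrec} is a $\limsup$ condition and gives no pointwise control of $\sum_{k<j}-\log d_{\delta}(f^{k}(x),\SC)$ at a fixed finite time $j$; and the hyperbolic-time condition of Definition~\ref{def:hyptimes} gives the lower bound $d_{\delta}(f^{k}(x),\SC)\ge\sigma^{b(n-k)}$, which degrades as $k$ decreases, so the per-iterate bound coming from (S1) is $B^{\dim M}\sigma^{-\beta b\,\dim M\,(n-k)}$ rather than a constant. Even invoking the sharper summed estimate \eqref{eq:hip-times-prop} one only obtains $|\det Df^{j}(x)|\le K_{1}^{j}K_{2}^{n}$ on $h^{-1}(n)$, not $K^{j}$; the inner sum over $j$ then grows like $nK_{1}^{n}K_{2}^{n}$ and the tail requirement becomes $\lambda(h^{-1}(n))\le C\tau^{n}$ with $\tau K_{1}K_{2}<1$, a quantitative condition on $B,\beta,\sigma,b$ that no known consequence of correlation decay supplies. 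This is exactly the obstruction that Lemma~\ref{le:summa1d} circumvents in dimension one through the concatenated collections of \cite{AlCasPin}, using the identity $|\det Df|=\|Df\|$, which is the identity that fails in the higher-dimensional setting the conjecture is aimed at. Since your proposal does not engage with this obstruction, it does not establish the conjecture as it stands.
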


We presented the results using a uniform measure for
$\theta_\epsilon$ but many simple generalizations are
possible assuming only that $\theta_\epsilon\ll\lambda$ and
$\supp(\theta_\epsilon)\xrightarrow[\epsilon\searrow0]{}\{0\}$.

We also avoided technical complexities by considering only
maps on tori, on which it is clear how to make additive
perturbations in the form~(\ref{pertaditiva}). However, it
is possible (although technically more involved) to make
similar perturbations in any compact manifold, arguing along
the lines of \cite[Example 2]{vdaraujo2000}. We focus on
additive perturbations on parellelizable manifolds to
present the ideas in a simple form.

\subsection*{Acknowledgments}
This is M.P. PhD thesis prepared at the Federal University
of Rio de Janeiro, Brazil. All authors are indebted to the
research facilities provided by the Mathematics Institute at
this University.


\section{Examples of Application}
\label{sec:examples-application}

Theorem~\ref{principal} ensures stochastic stability for
any non-uniformly expanding map that has slow recurrence to
the critical set with the first hyperbolic function in
$L^{p}$ for $p>3$. We present natural conditions on
the speed of expansion that imply this integrability
condition and use this to obtain examples where our
results apply.

We note that, from  slow recurrence to the critical set
and non-uniform expansion, Lemma~\ref{le:infhyptimes}
ensures that for $c=-\log\sigma>0$ and small
$\gamma,\delta>0$ the following values are well
defined $\lambda$-a.e.
\begin{align*}
  \SD(x)
  &=
  \min\left\{k\ge1: \frac1n\sum_{j=0}^{n-1} -\log
  d_{\delta}(f^{j}(x),\SC)\leq\gamma\quad\text{for
    all}\quad n\ge k\right\};\quad\text{and}
\\
\SE(x)
&=\min\left\{k\ge1: \frac1n\sum_{j=0}^{n-1}\log
  \|Df(f^j(x))^{-1}\|\ge \frac{c}3\quad\text{for
    all}\quad n\ge k\right\}.
\end{align*}
We combine these two estimates in the set
\begin{align*}
  \Gamma_n=\{x\in M:\SD(x)>n\quad\text{and}\quad\SE(x)>n\}.
\end{align*}
We now observe that, trivially from the definitions,
every point in $\Gamma_n$ has a first
$(\sigma,\delta)$-hyperbolic time of at most $n$, thus
\begin{align*}
  h^{-1}(\{n\})\subset h^{-1}(\{1,2,\dots,n\})\subset\Gamma_n.
\end{align*}

\begin{remark}
  \label{rmk:Gamma_tail}
  If for some constant $\kappa>0$ and $q>4$ we have
  $\lambda(\Gamma_n)\le\kappa n^{-q}$ for all
  sufficiently large $n$, then $h\in L^p(\lambda)$ for
  some $p>3$, since for all small enough $\epsilon>0$
  we have $\sum_{n>m}
  n^{q-1-\epsilon}\lambda(h^{-1}(\{n\}))
  \le
  \kappa\sum_{n>m}n^{-1-\epsilon}<\infty$ for some $m>1$.
\end{remark}


\subsection{Non-uniformly expanding maps with infinitely
  many critical points}
\label{sec:non-uniformly-expand}

We now present the main motivating example of application of
Theorem~\ref{principal}: maps with infinite critical points.
We consider the family $f_t:\sS^1\to\sS^1$ from
the work of Pacifico-Rovella-Viana \cite{PRV98}. This
family is obtained from the map
$\hat{f}:[-\epsilon_1,\epsilon_1]\to[-1,1]$ given by
\begin{equation}
\label{e2.3}
\hat{f}(z)=\left\{
\begin{array}{ll}
az^{\alpha}\sin(\beta\log(1/z)) & \mbox{   if   }z>0\\
-a|z|^{\alpha}\sin(\beta\log(1/|z|)) & \mbox{   if   }z<0,
\end{array}
\right.
\end{equation}
where $a>0$, $0<\alpha<1, \, \beta>0$ and $\ep_1>0$, see
Figure~\ref{Fig4}.

\begin{figure}[h!] \centering
\includegraphics[height=5.5cm]{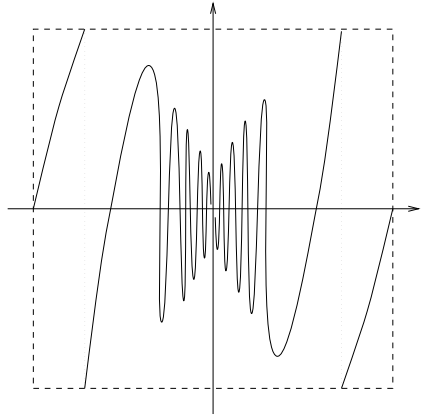}
\caption{\label{Fig4} Graph of the circle map $f$.}
\end{figure}
Maps $\hat{f}$ as above  have infinitely many critical points,
of the form
\begin{equation}
\label{e2.4}
x_{k}=\hat{x}\exp(-k\pi/\beta)
\mbox{    and    }x_{-k}=-x_{k}
\mbox{    for each large   }k>0
\end{equation}
where
$\hat{x}=\exp\big(-\frac1{\beta}\tan^{-1}\frac{\beta}{\alpha}\big)>0$
is independent of $k$.  Let $k_0\ge 1$ be the smallest
integer such that $x_k$ is defined for all $|k|\ge k_0$, and
$x_{k_0}$ is a local minimum.

We extend this expression to the whole circle $\sS^1=I/\{-1\sim
1\}$, where $I=[-1,1]$, in the following way.  Let $\tilde{f}$ be an
orientation-preserving expanding map of $\sS^1$ such that
$\tilde{f}(0)=0$ and $\tilde{f}'>\tilde\sigma$ for some constant
$\tilde\sigma>>1$. We define
$\epsilon=2\cdot x_{k_0}/(1+e^{-\pi/\beta})$,
so that $x_{k_0}$ is the middle point of the interval
$(e^{-\pi/\beta}\epsilon,\epsilon)$ and
fix two points $x_{k_0}<\hat{y}<\tilde{y}<\epsilon$, with
\begin{equation}\label{eq:condhaty}
|\hat{f}'(\hat{y})|>>1\quad\mbox{and also}\quad
2 \frac{1-\epsilon^\tau}{1+e^{-\pi/\beta}} x_{k_0}
> \hat y >  x_{k_0},
\end{equation}
where $\tau$ is a small positive constant and we take
$k_0=k_0(\tau)$ sufficiently big (and $\epsilon$ small
enough) in order that \eqref{eq:condhaty} holds.  Then
we take $f$ to be any smooth map on $S^1$ coinciding
with $\hat{f}$ on $[-\hat{y},\hat{y}]$, with
$\tilde{f}$ on $S^1\setminus[-\tilde{y},\tilde{y}]$,
and monotone on each interval $\pm[\hat{y},\tilde{y}]$.

Finally let $f_t$ be the following one-parameter family of circle
maps unfolding the dynamics of $f=f_0$

\begin{equation}
\label{e2.4,8}
f_{t}(z)=\left\{
\begin{array}{ll}
f(z)+t & \mbox{   for   } z\in (0,\epsilon]\\
f(z)-t & \mbox{   for   } z\in [-\epsilon,0)
\end{array}
\right.
\end{equation}
for $t\in(-\epsilon,\epsilon)$. For
$z\in\sS^1\setminus[-\epsilon,\epsilon]$ we assume
only that $\big|\frac{\partial}{\partial z}
f_t(z)\big|\ge2$.

From the works \cite{PRV98} together with \cite{ArPa04},
it is known that for a positive Lebesgue measure subset $P$
of parameters $t$ the map $f_t$ has a dense orbit, is
non-uniformly expanding with slow recurrence to the
critical set $\SC=\{0\}\cup\{x_k: |k|\ge k_0\}$,
admits a unique absolutely continuous invariant
probability measure $\mu_t$ and the corresponding tail set
$\Gamma_n^t$ satisfies $\lambda(\Gamma_n^t)\le C
e^{-\xi n}$ for some constants $C,\xi>0$; see
\cite[Theorem A]{PRV98} and \cite[Theorems A, B and C]{ArPa04}.

Hence, from Remark~\ref{rmk:Gamma_tail} we can apply
Theorem~\ref{principal} to each of these maps $f_t$.

\begin{corollary}
  \label{cor:infinite-modal}
  Given $t_0\in P$, the map $f=f_{t_0}$ is
  stochastically stable for the adapted family of
  random perturbations $(f_t,\theta_\epsilon^\N)$
  obtained according to Theorem~\ref{principal}.
\end{corollary}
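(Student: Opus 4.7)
The plan is to directly verify each hypothesis of Theorem~\ref{principal} for $f=f_{t_0}$ and then invoke that theorem. The structural hypotheses --- that $f$ is a $C^2$ local diffeomorphism away from the non-degenerate critical set $\SC=\{0\}\cup\{x_k:|k|\ge k_0\}$, is non-uniformly expanding with slow recurrence to $\SC$, and admits a dense orbit --- are furnished for every parameter $t_0\in P$ by the combination of \cite{PRV98} and \cite{ArPa04}, as already recorded in the paragraph preceding the corollary. So the only substantive step is the $L^p$-integrability of the first hyperbolic time map $h$, for some $p>3$.

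For this step I would appeal to Remark~\ref{rmk:Gamma_tail}: it suffices to exhibit constants $\kappa>0$ and $q>4$ with $\lambda(\Gamma_n)\le \kappa n^{-q}$ for all sufficiently large $n$. But \cite[Theorem C]{ArPa04} gives the much stronger exponential estimate
\begin{equation*}
  \lambda(\Gamma_n^{t_0})\le C\,e^{-\xi n}
\end{equation*}
for constants $C,\xi>0$ depending on $t_0\in P$. Since exponential decay dominates every polynomial rate, the bound $\lambda(\Gamma_n)\le \kappa n^{-q}$ holds with any $q>4$ and some $\kappa=\kappa(q,C,\xi)$. Hence Remark~\ref{rmk:Gamma_tail} delivers $h\in L^p(\lambda)$ for some (indeed every) $p>3$.

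With all the hypotheses of Theorem~\ref{principal} in hand, we apply it to $f=f_{t_0}$: this produces a Borel measurable, locally constant function $\zeta\colon\T^1\to\R^+$ such that the additive family~(\ref{pertaditiva}) and the noise measures $\theta_\epsilon^\N$ make $f$ stochastically stable, which is precisely the conclusion of the corollary. There is no genuine obstacle in this argument; the only minor point requiring care is notational, namely keeping the PRV parameter $t$ appearing in~(\ref{e2.4,8}), which merely selects the unperturbed map $f_{t_0}$, separate from the noise parameter $t$ driving the random perturbation furnished by Theorem~\ref{principal}.
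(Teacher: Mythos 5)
Your proposal is correct and follows essentially the same route as the paper: verify the structural hypotheses of Theorem~\ref{principal} from \cite{PRV98} and \cite{ArPa04}, use the exponential tail estimate $\lambda(\Gamma_n^{t_0})\le Ce^{-\xi n}$ together with Remark~\ref{rmk:Gamma_tail} to obtain $h\in L^p(\lambda)$ for $p>3$, and then apply Theorem~\ref{principal}. The remark distinguishing the PRV family parameter from the noise parameter is a sensible clarification but does not change the argument.
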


This is the first result on stochastic stability of
one-dimensional maps with infinitely many critical points.

\subsection{Non-uniformly expanding quadratic maps}
\label{sec:non-uniformly-expand-1}

The quadratic family $f_{a}:[-1,1]\rightarrow[-1,1]$
given by $f_{a}=1-ax^{2}$ for $0<a\leq 2$ provides a
class of maps satisfying the hypothesis of
Theorem~\ref{principal}. Indeed, Jakobson~\cite{Ja81} and
Benedicks-Carleson~\cite{BC85} prove the existence of a
physical measure for a positive Lebesgue measure subset
of parameters $a\in (0,2]$ for which $f_a$ is
non-uniform expanding with slow recurrence to the
critical point; Young~\cite{Yo92} and,
more recently, Freitas~\cite{freitas} obtain exponential
decay of the tail sets $\Gamma_n$. From
Remark~\ref{rmk:Gamma_tail} we can apply
Theorem~\ref{principal} for all the maps in the
positive Lebesgue measure subset of parameters found by
Jacobson and Benedicks-Carleson, obtain stochastic stability
for this class of maps. We note that strong stochastic
stability was obtained for the same class in the work of
Baladi-Viana~\cite{BaV96}.


\subsection{Intermitent Maps}
\label{sec:intermitent-maps}

Our results enables us also to deduce stochastic
stability for a class of intermittent applications
\cite{manneville1980}, where this property was obtained
for maps $C^{1+\alpha}$ but with the condition that
$\alpha\geq 1$; see \cite{ArTah}. Recently Shen, together with van
Strien in \cite{ShvS13}, obtained strong stochastic stability
for the Manneville-Pomeaux family of intermittent maps,
answering the questions raised in \cite{ArTah}.

Consider $\alpha>0$ and the map $T_\alpha: [0,1]\rightarrow [0,1]$ given by:
$$
T_\alpha(x)=\left\{
\begin{array}{ccc}
x+2^{\alpha}x^{1+\alpha}, & \mbox{~if~} & x\in[0,\frac{1}{2})\\
x-2^{\alpha}(1-x)^{1+\alpha}, & \mbox{~if~} & x\in[\frac{1}{2},1].
\end{array}
\right.
$$
This map is a $C^{1+\alpha}$ local diffeomorphism of
$\sS^1:=[0,1]/\{0\sim 1\}$, so there are no critical
points. The unique fixed point is $0$ with
$DT_\alpha(0)=1$. If $\alpha\geq 1$, then the Dirac mass in
zero $\delta_{0}$ is the unique physic probability measure
and so the Lyapunov exponent in Lebesgue almost every point
is zero; see \cite{thaler1983}. But, for $0<\alpha<1$, there
exists a unique absolutely continuous invariant probability
$\mu$ which is physical and whose basin has full Lebesgue
measure.  To deduce stochastic stability for $\alpha$ in a
subinterval of $(0,1)$, we need some definitions and
results.

Given a $T_\alpha$-invariant and ergodic probability measure
$\mu$ and $\epsilon>0$ we define the \emph{large deviation
  in time $n$ of the time average of the observable
  $\varphi$ from its spatial average} as
  \begin{align*}
    \mathrm{LD}_{\mu}(\varphi,\epsilon,n)
    =
    \mu\left\{x : \left|\frac{1}{n}\sum_{i=0}^{n-1}\varphi(
        f^{i}(x))-\int\varphi d\mu\,\right|>\epsilon\right\}
  \end{align*}
We note that  Birkhoff's Ergodic Theorem ensures
$\mathrm{LD}_{\mu}(\varphi,\epsilon,n)\xrightarrow[n\rightarrow\infty]{}0$
and the rate of this convergence is a relevant quantity.

Since $T_\alpha$ is a local diffeomorphism we have
$\Gamma_{n}=\{x\in\sS^1: \SE(x)> n \}$ and this is naturally
a deviation set for the time averages of
$\varphi=\log|DT_\alpha|$: if $\mu_\alpha$ is the unique
absolutely continuous $T_\alpha$-invariant probability, then
the Lyapunov exponent $\lambda=\int\varphi\,d\mu>c$, where
$c>0$ is the constant in the definition of non-uniform
expansion (\ref{nue}), and so for all large enough $n>1$ and
small enough $\epsilon>0$
\begin{align}\label{eq:LD-Gamma}
  \mathrm{LD}_{\mu}(\log|DT_\alpha(x)|,\epsilon,n)\ge\mu(\Gamma_{n}).
\end{align}
To estimate $\mu(\Gamma_n)$ we now relate
$\mathrm{LD}_{\mu}$ with the rate of decay of correlations.
Let \( \mathcal B_{1}, \mathcal B_{2} \) denote Banach
spaces of real valued measurable functions defined on
\( M \).  We denote the \emph{correlation} of non-zero
functions $\varphi\in \mathcal B_{1}$ and \( \psi\in
\mathcal B_{2} \) with respect to a measure $\mu$ as
\begin{align*}
  \mathrm{Cor}_\mu  (\varphi,\psi)
  =
  \frac{1}{\|\varphi\|_{\mathcal
      B_{1}}\|\psi\|_{\mathcal B_{2}}}\left|\int
    \varphi\, \psi\, d\mu-\int \varphi\, d\mu\int
    \psi\, d\mu\right|.
\end{align*}
We say that we have \emph{decay of correlations}, with
respect to the measure $\mu$, for observables in
$\mathcal B_1$ \emph{against} observables in $\mathcal
B_2$ if, for every $\varphi\in\mathcal B_1$ and every
$\psi\in\mathcal B_2$ we have
\begin{align*}
  \mathrm{Cor}_\mu(\varphi,\psi\circ f^n)\xrightarrow[n\rightarrow\infty]{}0.
\end{align*}

The following result from~\cite{Melb09} allows us to relate
decay of correlations with large deviations; see also
\cite{AFLV11}. We say that a measure $\mu$ is
\emph{$f$-non-singular} if for all measurable sets $A$ such
that $\mu(A)=0$, then $\mu(f^{-1}(A))=0$.

\begin{theorem}[\cite{Melb09,AFLV11}]\label{exemplo 2}
  Let \( f: M\to M \) preserve an ergodic probability
  measure \( \mu \) with respect to which $f$ is
  non-singular. Let \( \SB\subset L^{\infty}(\mu) \) be a
  Banach space with norm \( \|\cdot \|_{\SB} \) and
  $\varphi\in \SB$.
 Let $\beta>0$ and suppose that there exists $\kappa>0$
 such that for all $\psi\in
 L^\infty(\mu)$ we have
 \(
 \mathrm{Cor}_\mu(\varphi,\psi\circ f^n) \le \kappa\cdot n^{-\beta}.
 \)
 Then, for every
 \( \epsilon>0 \), there exists  \( C=C(\varphi, \epsilon)>0 \)
 such that \(
 \mathrm{LD}_{\mu}(\varphi,\epsilon,n) \leq  C n^{-\beta}.
 \)
\end{theorem}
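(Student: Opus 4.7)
The plan is to reduce the large deviation estimate to a high-moment bound via Markov's inequality, and then control the moments by iteratively exploiting the polynomial decay of correlations. Throughout I would work with the centered observable $\tilde\varphi = \varphi - \int\varphi\,d\mu$, which lies in $\SB\cap L^\infty(\mu)$ and satisfies $\int\tilde\varphi\,d\mu=0$, with correlations decaying at the same rate as those of $\varphi$. Setting $S_n = \sum_{j=0}^{n-1}\tilde\varphi\circ f^j$ and choosing an even integer $q=2m$, Markov's inequality gives
\[
\mathrm{LD}_\mu(\varphi,\epsilon,n) \;=\; \mu\{|S_n|>n\epsilon\} \;\le\; (n\epsilon)^{-2m}\int S_n^{2m}\,d\mu,
\]
so the task becomes bounding the $2m$-th moment of $S_n$ by $O(n^{2m-\beta})$.

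The next step expands the moment as the multiple sum
\[
\int S_n^{2m}\,d\mu \;=\; \sum_{0\le j_1,\dots,j_{2m}\le n-1}\int\prod_{i=1}^{2m}\tilde\varphi\circ f^{j_i}\,d\mu,
\]
and estimates each summand by singling out the largest gap among the ordered indices $j_{\pi(1)}\le\dots\le j_{\pi(2m)}$. At this gap I would split the product into two pieces, one of which can be written as $\tilde\varphi\circ f^N$ for the appropriate shift $N$ while the other lies in $L^\infty(\mu)$ with norm controlled by $\|\tilde\varphi\|_\infty^{2m-1}$; invariance of $\mu$ together with the decay of correlations hypothesis then produces a bound of order $(j_{\pi(k+1)}-j_{\pi(k)})^{-\beta}$ for each term, while the mean-zero condition on $\tilde\varphi$ kills the residual product-of-integrals correction. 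A combinatorial count over the placement of the indices would then give the desired $O(n^{2m-\beta})$ bound, at which point dividing by $(n\epsilon)^{2m}$ and choosing $m$ large enough that $2m>\beta$ concludes the estimate.

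The main obstacle is precisely the combinatorial step above. The hypothesis provides one factor of $n^{-\beta}$ per application of decay of correlations, but the shifted factors $\tilde\varphi\circ f^{j_i}$ are not in $\SB$, so decay can be extracted only from a single gap at a time rather than from every gap simultaneously. To recover $n^{2m-\beta}$ (and not merely $n^{2m-1}$, which would be useless) one must argue more carefully, iterating the correlation bound on suitable sub-products or, as Melbourne does, constructing a martingale-coboundary type decomposition of $\tilde\varphi$ that is adapted to the polynomial mixing rate. Balancing this decomposition against the cost of placing the remaining $2m-1$ indices is the heart of the argument and is the source of the constant $C=C(\varphi,\epsilon)$ appearing in the conclusion.
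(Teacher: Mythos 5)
The paper does not prove this theorem; it is quoted verbatim from Melbourne~\cite{Melb09} (see also \cite{AFLV11}), so the only question is whether your sketch would stand on its own. As written, it does not, and for the reason you yourself flag, which is worth making precise. After sorting and shifting by $j_1$, the summand becomes $\int \tilde\varphi\cdot\big(\Psi\circ f^{\,j_2-j_1}\big)\,d\mu$ with $\Psi=\prod_{i\ge 2}\tilde\varphi\circ f^{\,j_i-j_2}\in L^\infty$. The hypothesis only controls correlations of the form $\mathrm{Cor}_\mu(\varphi,\psi\circ f^n)$ with $\varphi\in\SB$ and $\psi\in L^\infty$, and the only factor you can place in $\SB$ is $\tilde\varphi$ itself; any larger left block $\tilde\varphi\cdot\tilde\varphi\circ f^{\,j_2-j_1}\cdots$ leaves $\SB$. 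Hence the single decay factor you can extract is $(j_2-j_1)^{-\beta}$, not $(\text{largest gap})^{-\beta}$ as your outline assumes. Summing $(j_2-j_1)^{-\beta}$ over ordered tuples in $[0,n)^{2m}$ gives $O(n^{2m-1})$ when $\beta>1$ (only for $\beta\le1$ do you get $O(n^{2m-\beta})$), so Markov's inequality yields merely $\mathrm{LD}_\mu\lesssim n^{-1}$, independently of $m$. Since the paper's applications (Section~\ref{sec:intermitent-maps}) need $\beta>4$, this is not enough.

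Your closing paragraph correctly identifies both the obstruction and the standard remedy (a martingale--coboundary decomposition of $\tilde\varphi$ adapted to the polynomial mixing rate, together with maximal/Burkholder-type inequalities for the martingale part and $L^p$ control of the coboundary via the $L^1$ contraction $\|L^n\tilde\varphi\|_1\lesssim n^{-\beta}$), which is essentially Melbourne's route. But you do not carry out that step, and without it the moment expansion cannot produce the claimed rate. So this is an accurate diagnosis of where the difficulty lies together with a pointer to the fix, not a proof; to complete it you would need to replace the largest-gap heuristic with the actual decomposition argument from~\cite{Melb09}.
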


We now observe that the absolutely continuous
$T_\alpha$-invariant probability measure $\mu_\alpha$
$f$-non-singular and that the following estimate for the
rate of decay of correlations is known.

\begin{theorem}[Theorem 4.1 in \cite{LSV99}]
  For all $\psi\in L^{\infty}$ and $\varphi\in C^{1}([0,1])$
  such that $\int \varphi d\mu=0$ we have: $\left|\int
    (\psi\circ T_\alpha^{n})\cdot\varphi\, d\mu\right| \le
  A(\|\varphi\|_{C^1})\cdot \|\psi\|_{\infty}\cdot
  n^{1-1/\alpha}(\log n)^{1/\alpha}$, where
  $A:\mathbb{R}\rightarrow\mathbb{R}$ is an affine map.
\end{theorem}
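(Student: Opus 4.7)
The strategy is to replace $T_\alpha$ by an induced first-return map on a region where the dynamics is uniformly expanding, then transfer polynomial correlation-decay estimates back to $T_\alpha$ via the tail of the return time function. The source of the polynomial rate is the neutral fixed point at $0$, where $T_\alpha'(0)=1$ and nearby orbits are pushed away very slowly, producing heavy tails for return times.

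First I would set $Y=(1/2,1]$ and define the first-return time $R(y)=\inf\{n\ge 1: T_\alpha^n(y)\in Y\}$ together with the first-return map $\widehat T_\alpha=T_\alpha^R:Y\to Y$. By construction $\widehat T_\alpha$ is a full-branched Markov map with countably many branches, each a $C^{1+\alpha}$ diffeomorphism onto $Y$, with uniformly bounded distortion and derivative bounded below by some constant greater than $1$. A standard transfer-operator analysis on bounded-variation (or Lipschitz) functions gives an ergodic $\widehat T_\alpha$-invariant probability $\widehat\mu$ absolutely continuous with respect to Lebesgue, with density bounded away from $0$ and $\infty$, and correlations for $\widehat T_\alpha$ that decay at least exponentially fast.

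Next I would estimate $\mathrm{Leb}\{R>n\}$. Near $0$ the recursion $x_{k+1}=x_k+2^\alpha x_k^{1+\alpha}$ is well-approximated by the ODE $\dot x=2^\alpha x^{1+\alpha}$; integrating yields $x_k\asymp (k+c)^{-1/\alpha}$ for large $k$. Consequently, the set of $y\in Y$ whose first iterate lands in $[0,1/2]$ at a point of size $\asymp n^{-1/\alpha}$ and then needs at least $n$ further steps to return to $Y$ has Lebesgue measure $\asymp n^{-1-1/\alpha}$; summing this contribution up to $n$ and pulling back gives $\mathrm{Leb}\{R>n\}\asymp n^{-1/\alpha}$, and hence $\mu_\alpha(R>n)\asymp n^{-1/\alpha}$, where $\mu_\alpha$ is built from $\widehat\mu$ by the usual Kakutani-tower construction.

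Finally, I would invoke an operator-renewal equation relating $\mathcal L_{T_\alpha}^n$ to the transfer operator of $\widehat T_\alpha$ through a decomposition of length-$n$ orbits according to their successive visits to $Y$. A Karamata-type Tauberian argument, or equivalently an invariant-cone argument \emph{\`a la} Liverani on the cone of positive non-increasing functions with a prescribed singular profile at $0$, then converts the tail estimate $\mu_\alpha(R>n)\asymp n^{-1/\alpha}$ into an operator-norm bound of order $n^{1-1/\alpha}(\log n)^{1/\alpha}$ applied to the zero-mean $C^1$ observable $\varphi$; pairing against $\psi\in L^\infty$ by duality yields the stated estimate, with $A(\|\varphi\|_{C^1})$ an affine function of the $C^1$-norm arising from the cone constants. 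The hard part is this last step: extracting the precise logarithmic correction $(\log n)^{1/\alpha}$, rather than a coarser polynomial bound, requires delicate bookkeeping in the Tauberian/renewal calculation and an accurate description of the contraction rate of the transfer operator on the chosen cone, which is where the LSV analysis concentrates its technical weight.
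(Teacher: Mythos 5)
Note first that the paper does not prove this statement; it is quoted verbatim from Liverani--Saussol--Vaienti~\cite{LSV99} (their Theorem~4.1), so there is no internal proof to compare against.

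On its own merits your sketch has the right two ingredients --- the return-time tail $\operatorname{Leb}\{R>n\}\asymp n^{-1/\alpha}$ produced by the neutral fixed point, and a transfer-operator estimate converting this tail into polynomial decay --- but it treats the inducing-plus-operator-renewal route and the LSV cone route as ``equivalent,'' and they are not, and the distinction matters precisely for the bound you are asked to prove. LSV work directly with the transfer operator of $T_\alpha$ (no inducing to a first-return map), show it preserves a cone of non-increasing densities with singular profile $x^{-\alpha}$ at the origin, and bound the Hilbert-projective diameter of the $n$-th image of that cone by splitting the mass lingering near $0$ from the rest; the factor $(\log n)^{1/\alpha}$ is an artefact of this splitting. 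The inducing/renewal (or Tauberian) approach in the style of Young, Sarig and Gou\"ezel instead yields the sharper upper bound $O(n^{1-1/\alpha})$ with no logarithm, i.e.\ a strictly stronger conclusion than the cited theorem, so following that route would not reproduce Theorem~4.1 of~\cite{LSV99} as stated. A small slip in your tail computation as well: $\operatorname{Leb}\{R>n\}\asymp n^{-1/\alpha}$ follows directly as the measure of the pre-image under $T_\alpha|_{(1/2,1]}$ of an interval of length $\asymp n^{-1/\alpha}$ at the origin; ``summing the contribution up to $n$'' of terms of size $\asymp k^{-1-1/\alpha}$ would give a constant, and the correct tail sum runs from $n$ to $\infty$.
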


Hence, since $\log|DT_\alpha(x)|$ is a bounded continuous
function on $[0,1]$, there is a constant $C>0$ such that
$$
\mathrm{Cor}_{\mu}(\log|DT_\alpha(x)|,\psi\circ
T_\alpha^{n}) < Cn^{1-1/\alpha}.\log n^{1/\alpha}.$$ From
Theorem~\ref{exemplo 2} and relation~(\ref{eq:LD-Gamma}) we
deduce that, for every $\delta>0$, we have a constant
$C_1>0$ such that
\begin{eqnarray}\label{eq:mu-gamma}
\mu(\Gamma_{n})<C_{1}\cdot n^{(1-1/(\alpha+\delta))}.
\end{eqnarray}
Since $\mu\ll\lambda$, we have $d\mu=h\,d\lambda$ with a
density function $h$ which, from \cite[Theorem A]{Hu04}, is
bounded, strictly positive and, for a neighborhood $I_0$ of
$0$ there are constants $R>0$ and $\sigma_{0}=
\lim_{x\rightarrow 0}\sum_{x_{1}\in
  T_\alpha^{-1}(x)\setminus
  I_{0}}\frac{h(x_{1})}{DT_\alpha(x_{1})}$ such that
$|x^{\alpha}\cdot h(x)-\sigma_0|\leq R\cdot x^{\alpha}$.
This enables us to find $\kappa>0$ such that
$\lambda(\Gamma_n)\le \kappa \mu(\Gamma_n)$ which, together
with~(\ref{eq:mu-gamma}) provides a constant $C>0$ such that
for all small $\delta>0$ and large $n$
$$\lambda(\Gamma_{n})<C \cdot n^{(1-1/(\alpha+\delta))}.$$
We therefore have for $p>3$, since $\delta>0$ may be take arbitrarily small
\begin{align*}
  \sum_{n=1}^{\infty}n^{p}\cdot\lambda(\Gamma_{n})
  <
  C\sum_{n=1}^{\infty}n^{(p+1-1/(\alpha+\delta))}<\infty
  \quad\text{for all}\quad 0<\alpha\le\frac{1}{p+2}.
\end{align*}
Thus, for any $p>3$, we get for $0<\alpha<\frac{1}{5}$ the
$L^p$ integrability of the first hyperbolic time  map with respect
to $\lambda$ and, from Theorem~\ref{principal} we obtain

\begin{corollary}\label{cor:intermittentstochstab}
  All intermitent maps $T_\alpha$ with parameters
  $0<\alpha<\frac{1}{5}$ are stochastically stable under
  adapted random perturbations.
\end{corollary}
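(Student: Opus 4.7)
The proof essentially assembles the chain of estimates built in Section~\ref{sec:intermitent-maps} and then invokes Theorem~\ref{principal}, so my plan is to verify each link and make sure the parameter ranges line up. First I would recall that $T_\alpha$ is a $C^{1+\alpha}$ local diffeomorphism on $\sS^1$ with empty critical set $\SC=\emp$, so the non-degeneracy conditions (S1)--(S3) hold trivially, and slow recurrence~\eqref{eq:slowrec} is automatic. Moreover, for $0<\alpha<1$ there is a unique absolutely continuous invariant probability measure $\mu_\alpha$ whose basin has full Lebesgue measure and satisfies $\int\log|DT_\alpha|\,d\mu_\alpha>0$, so $T_\alpha$ is non-uniformly expanding with respect to $\mu_\alpha$; since $\mu_\alpha\ll\lambda$, this also gives~\eqref{nue}. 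Thus the hypotheses of Theorem~\ref{principal} except for the $L^p$-integrability of the first hyperbolic time map $h$ are in place.

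Next I would convert the decay of correlations into a tail bound for the hyperbolic time map. The Liverani--Saussol--Vaienti estimate (Theorem 4.1 in \cite{LSV99}) applied to $\varphi=\log|DT_\alpha|\in C^1$ and arbitrary $\psi\in L^\infty$ yields the polynomial decay rate $n^{1-1/\alpha}(\log n)^{1/\alpha}$. Given any $\delta>0$ this is eventually dominated by $n^{1-1/(\alpha+\delta)}$, so Theorem~\ref{exemplo 2} applies with exponent $\beta=1/(\alpha+\delta)-1$, giving $\mathrm{LD}_{\mu_\alpha}(\log|DT_\alpha|,\epsilon,n)\le Cn^{-\beta}$. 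Combining this with inequality~\eqref{eq:LD-Gamma} produces $\mu_\alpha(\Gamma_n)\le C_1 n^{1-1/(\alpha+\delta)}$. To translate this into a Lebesgue estimate I would use Hu's density bound \cite{Hu04}: the density $h=d\mu_\alpha/d\lambda$ is strictly positive and bounded on $\sS^1\setminus I_0$ while near $0$ one has $|x^\alpha h(x)-\sigma_0|\le R x^\alpha$; since $h$ is bounded below by a positive constant, there exists $\kappa>0$ with $\lambda(\Gamma_n)\le\kappa\mu_\alpha(\Gamma_n)$, hence $\lambda(\Gamma_n)\le C n^{1-1/(\alpha+\delta)}$.

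Now I would apply Remark~\ref{rmk:Gamma_tail}. This requires $\lambda(\Gamma_n)\le\kappa n^{-q}$ with $q>4$, which translates into $1/(\alpha+\delta)-1>4$, i.e., $\alpha+\delta<1/5$. Since $\delta>0$ is arbitrary, any $\alpha\in(0,1/5)$ satisfies this strict inequality for sufficiently small $\delta$. Consequently the first hyperbolic time map $h$ lies in $L^p(\lambda)$ for some $p>3$, as noted in the remark via the elementary computation $\sum_n n^{q-1-\vep}\lambda(h^{-1}(\{n\}))\le\kappa\sum_n n^{-1-\vep}<\infty$.

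Finally I would check the mild remaining hypothesis of Theorem~\ref{principal}: $T_\alpha$ has a dense orbit. Since $T_\alpha$ is topologically mixing on $\sS^1$ (the absolutely continuous invariant measure is ergodic with full support and positive entropy), density of orbits follows. All hypotheses of Theorem~\ref{principal} now hold, so there exists an adapted additive perturbation $f_t(x)=T_\alpha(x)+t\zeta(x)$ as in~\eqref{pertaditiva} for which $T_\alpha$ is stochastically stable, yielding the corollary. The main potential obstacle is the fine-tuning between the decay exponent $1/\alpha-1$ coming from \cite{LSV99}, the hypothesis $q>4$ in Remark~\ref{rmk:Gamma_tail}, and the requirement $p>3$ in Theorem~\ref{principal}; this three-way matching is exactly what pins down the threshold $\alpha<1/5$ and leaves no slack for improvement by this route.
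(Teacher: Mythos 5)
Your proposal is correct and follows essentially the same route as the paper: passing from the LSV decay-of-correlations estimate through Theorem~\ref{exemplo 2} and inequality~\eqref{eq:LD-Gamma} to a polynomial tail bound on $\mu_\alpha(\Gamma_n)$, then transferring it to $\lambda(\Gamma_n)$ via the lower bound on Hu's density, and finally invoking Theorem~\ref{principal}. The only cosmetic difference is that you package the last step through Remark~\ref{rmk:Gamma_tail} (requiring $q>4$), whereas the paper sums the series $\sum n^p\lambda(\Gamma_n)$ directly and reads off the constraint $\alpha<1/(p+2)$; both yield the same threshold $\alpha<1/5$, and you add the (correct, and in the paper implicit) verification that $T_\alpha$ has a dense orbit.
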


\section{Adapted random perturbations}
\label{sec:adapted-random-pertu}

Here we construct adapted random perturbations. These
perturbations are constructed by an adequate choice of
hyperbolic times along almost all orbits. Then we show that
these specially chosen hyperbolic times are preserved under
the adapted random perturbations in such a way that the
random map is non-uniformly expanding and has
slow-recurrence for random orbits. In addition, the
hyperbolic times for a point $(\underline t,x)\in\Omega$
under the adapted random perturbations are the same as the
hyperbolic times of $x$ for the unperturbed dynamics.

The only assumption is that the original unperturbed map
admits a pair $(\sigma,\delta)$, with $0<\delta,\sigma<1$,
satisfying: the first $(\sigma,\delta)$-hyperbolic time map
$h$ is defined $\lambda$-almost everywhere and $h$ is
$L^p$-integrable for some $p>3$, i.e.,
$\sum_{n\ge1}n^p\lambda(h^{-1}(n))<\infty$.

In what follows we fix $(\sigma,\delta)$ as above and write
hyperbolic time to mean $(\sigma,\delta)$-hyperbolic time.

\begin{definition}\label{th adapt} The adapted hyperbolic time of
  $x\in M\setminus\mathcal{C}$ is the number
$$H(x):=\sup\{h(z)-l;
x=f^l(z), z\in M ~\mbox{and}~l\geq 0\}$$ where
$h:M\rightarrow\mathbb{Z}^{+}$ is the first hyperbolic time
function.
\end{definition}

Note that $H(x)$ is a hyperbolic time for $x$. In fact, if
$x=f^l(z)$ for $l\ge1$ and some point $z$, and $h(z)$ is the
first hyperbolic time of $z\in M$, then $h(z)-l$ is a
hyperbolic time for $f^{l}(z)=x$. Moreover, it is clear that
$H(x)\ge h(x)$ if $h(x)$ is finite.

To check that $H$ is finite almost everywhere, we note that
\begin{eqnarray}\label{controle do tempo adaptado}
H(x)\leq\sup\left\{n\in\mathbb{Z}^{+}:
x\in\bigcup_{i=0}^{n-1} f^{i}(h^{-1}(n))\right\}.
\end{eqnarray}

  Since for a one-dimensional map $f$ we have $|\det
  Df|=\|Df\|=|Df|$, then the assumption $h\in L^{p}(\lambda)$
  with $p>3$ implies (\ref{eq:summa}) in the one-dimensional
  setting.

\begin{lemma}
  \label{le:summa1d}
  Let $f$ be a non-uniformly expanding one-dimensional map
  having slow recurrence to the non-degenerate critical set.
  Let us assume that the first hyperbolic time map satisfies
  $h\in L^p(\lambda)$ for some $p>3$. Then $ \sum_{n\geq
    1}\sum_{j=0}^{n-1}\lambda(f^{j}(h^{-1}(n)))<\infty.$
\end{lemma}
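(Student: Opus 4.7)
The plan is to reduce the double sum in (\ref{eq:summa}) to a single integral by combining the one-dimensional change of variables with the hyperbolic-time condition, and then to bound that integral using the $L^{p}$-integrability of $h$ together with the bounded distortion on hyperbolic pre-balls.

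First I would invoke the one-dimensional area formula, which gives $\lambda(f^{j}(A))\le \int_{A}|Df^{j}(x)|\,d\lambda(x)$ for every measurable $A\subset\T^{1}$ and every $j\ge 0$, since both quantities equal $\int\#((f^{j})^{-1}(y)\cap A)\,d\lambda(y)$ when $f^{j}$ is injective on $A$ and the inequality persists otherwise. Applying this to $A=h^{-1}(n)$ and combining with Definition~\ref{def:hyptimes} for $k=n-j$, which in one dimension reads $|Df^{n-j}(f^{j}(x))|\ge\sigma^{-(n-j)}$, the chain rule gives for every $x\in h^{-1}(n)$ and $0\le j\le n-1$,
$$|Df^{j}(x)|=\frac{|Df^{n}(x)|}{|Df^{n-j}(f^{j}(x))|}\le\sigma^{n-j}|Df^{n}(x)|.$$
Summing the geometric series in $j$ yields $\sum_{j=0}^{n-1}|Df^{j}(x)|\le \frac{1}{1-\sigma}|Df^{n}(x)|$, and integrating over $h^{-1}(n)$ and summing over $n\ge 1$ (noting that $\{h^{-1}(n)\}_{n\ge 1}$ partitions $M$ up to a $\lambda$-null set by Remark~\ref{rmk:infhyptimes}) collapses the double sum to
$$\sum_{n\ge 1}\sum_{j=0}^{n-1}\lambda(f^{j}(h^{-1}(n)))\le\frac{1}{1-\sigma}\int_{M}|Df^{h(x)}(x)|\,d\lambda(x).$$

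Next, I would bound the remaining integral using the standard distortion control on hyperbolic pre-balls: for every $x\in h^{-1}(n)$ there is a neighbourhood $V_{n}(x)$ such that $f^{n}\colon V_{n}(x)\to B(f^{n}(x),\delta_{0})$ is a diffeomorphism with distortion at most some universal $K_{0}>1$, giving the pointwise estimate $|Df^{n}(x)|\le 2K_{0}\delta_{0}/\lambda(V_{n}(x))$. Combined with the contraction estimate $\lambda(V_{n}(x))\le 2K_{0}\delta_{0}\sigma^{n}$, extracting a disjoint (or bounded-multiplicity) Vitali subcover of $h^{-1}(n)$ by such pre-balls allows one to estimate $\int_{h^{-1}(n)}|Df^{n}|\,d\lambda$ by a power-of-$n$ factor times $\lambda(h^{-1}(n))$. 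Since $h\in L^{p}(\lambda)$ is equivalent to $\sum_{n\ge 1} n^{p}\lambda(h^{-1}(n))<\infty$, if the polynomial factor produced is of degree $\le p-1-\epsilon$ for some $\epsilon>0$, the summability in $n$ follows. This is precisely where the threshold $p>3$ enters.

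The principal obstacle lies in the last step: a naive use of the upper bound $\lambda(V_{n}(x))\le 2K_{0}\delta_{0}\sigma^{n}$ only produces exponential-in-$n$ control on the number of pre-balls required to cover $h^{-1}(n)$, while a uniform \emph{lower} bound on pre-ball sizes is unavailable because $|Df^{n}(x)|$ has no pointwise upper bound on $h^{-1}(n)$. The delicate balancing between the exponential contraction furnished by hyperbolic times and the polynomial tail estimate encoded in $h\in L^{p}$ — which converts exponential size bounds into only polynomial multiplicative factors after summation over the one-dimensional Vitali cover — is the technical heart of the one-dimensional simplification encapsulated by $|\det Df|=|Df|$, and explains why the hypothesis $p>3$ suffices in dimension one while higher-dimensional endomorphisms require the stronger condition (\ref{eq:summa}) as an explicit hypothesis in Theorem~\ref{mthm:higher}.
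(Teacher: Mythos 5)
Your reduction has a structural flaw that cannot be repaired by sharpening the Vitali argument: the intermediate quantity you reduce to is typically infinite under the stated hypothesis. Your first two steps (one-dimensional area formula plus the chain rule with $|Df^{n-j}(f^{j}(x))|\ge\sigma^{-(n-j)}$ from the hyperbolic-time definition) are correct and give
\begin{align*}
  \sum_{n\ge 1}\sum_{j=0}^{n-1}\lambda(f^{j}(h^{-1}(n)))
  \le
  \frac{\sigma}{1-\sigma}\int_{M}|Df^{h(x)}(x)|\,d\lambda(x)
  =
  \frac{\sigma}{1-\sigma}\sum_{n\ge 1}\int_{h^{-1}(n)}|Df^{n}|\,d\lambda.
\end{align*}
But the very same hyperbolic-time inequality you used gives the \emph{lower} bound $|Df^{n}(x)|\ge\sigma^{-n}$ on $h^{-1}(n)$, so
\begin{align*}
  \int_{M}|Df^{h(x)}(x)|\,d\lambda(x)\ge\sum_{n\ge 1}\sigma^{-n}\lambda(h^{-1}(n)),
\end{align*}
and this series converges only if $\lambda(h^{-1}(n))$ decays \emph{exponentially}. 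The hypothesis $h\in L^{p}(\lambda)$ gives only polynomial decay $\lambda(h^{-1}(n))=O(n^{-p})$, so your bound overshoots by an exponential factor and the finiteness you need is simply not available at this level of the reduction. The distortion/Vitali argument you gesture at in the last two paragraphs cannot rescue this: each pre-ball $V_{n}(x)$ contributes $\approx 2\delta_{1}$ to $\int_{h^{-1}(n)}|Df^{n}|\,d\lambda$, the available lower bound on $\lambda(V_{n}(x))$ is exponentially small in $n$ (cf.\ Lemma~\ref{le:innerball}), and there is no mechanism to cap the number of disjoint pre-balls needed to cover $h^{-1}(n)$ short of the trivial $\omega^{n}$-type estimate, which is exactly the exponential you must avoid. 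In short, the "polynomial factor of degree $\le p-1-\epsilon$" you hope for is actually an exponential factor, and no sub-exponential version of it is obtainable from the hyperbolic-time structure alone.

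The paper circumvents precisely this obstruction by \emph{not} working with the exponential sequence $a_{n}=\sigma^{-n}$. Following Alves--Castro--Pinheiro, it introduces the slower sequence $b_{n}=n^{\beta}$ with $\beta$ calibrated so that $b_{n}\le\min\{a_{n},\lambda(W_{n})^{-\epsilon}\}$ for a suitable $0<\epsilon<\frac{p-3}{p-1}$, considers the concatenated collection $U_{n}=\{x:|\det Df^{n}(x)|\ge b_{n}\}$, and runs the summability argument for the associated stopping time $\hat q(x)=\min\{n:x\in U_{n}\}$. The concatenation property ($x\in U_{n}$ and $f^{n}(x)\in U_{m}\Rightarrow x\in U_{n+m}$) is what makes the ACP telescoping estimate close, and it is exactly the replacement of the exponential threshold by a polynomial one that turns the polynomial tail furnished by $h\in L^{p}$, $p>3$, into the required summability. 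Your direct area-formula route bypasses this entire mechanism and therefore lands in a dead end. If you want to salvage the spirit of your computation, the correct move is to perform the same chain-rule estimate but with $n^{\beta}$ in place of $\sigma^{-(n-j)}$ (i.e.\ work with $\hat q$ and $U_{n}$ rather than $h$ and $\sigma^{-n}$), which is precisely what the ACP framework does.
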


\begin{proof}
  We follow~\cite[Section 3]{AlCasPin}. We note that if $n\ge1$
  is a $(\sigma,\delta)$-hyperbolic time, then $|\det
  Df^n(x)|\ge a_n=\sigma^{-n}$. Let $q(x)=\min\{k\ge1:|\det
  Df^k(x)|\ge a_k\}$. Then $q(x)\le h(x)$ and so $q\in
  L^p(\lambda)$ if $h\in L^p(\lambda)$.

  Let $W_n=\{x\in M:q(x)>n\}$. Then
  $W_n\subset\cup_{m>n}h^{-1}(m)$ and so we can find
  constants $\kappa,C>0$ such that
  \begin{align*}
    \lambda(W_n) \le \sum_{m>n}\lambda(h^{-1}(m)) \le
    \sum_{m>n}\frac{\kappa}{m^{p}} \le \frac{C}{n^{p-1}}.
  \end{align*}
  Hence there exists $\beta>0$ and $N\in\N$ such that
  $b_n=n^\beta$ satisfies
  $b_n\le\min\{a_n,\lambda(W_n)^{-\epsilon}\}$ for all $n\ge
  N$ and some $0<\epsilon<\frac{p-3}{p-1}$. In addition, we
  clearly have $b_nb_k\ge b_{k+n}$ for all big enough
  $k,n\in\N$.
  In this setting, $U_n=\{x\in M:|\det Df^n(x)|\ge b_n\}$
  is such that
  \begin{itemize}
  \item $\cup_{n\ge1} U_n$ has full Lebesgue measure, since
    $T_n=\{x\in M: n$ is a $(\sigma,\delta)$-hyperbolic
    time$\}$ satisfies $h^{-1}(n)\subset T_n\subset U_n$; and
  \item  if
    $x\in U_n$ and $f^n(x)\in U_m$, then $x\in U_{n+m}$
  \end{itemize}
(i.e., $(U_n)_{n\ge1}$ is a \emph{concatenated collection} as
  defined in \cite{AlCasPin}). In addition, letting
  $\hat q(x)=\min\{n\ge1:x\in U_n\}$, we have again $\hat
    q(x)\le h(x)$ in general. However, if $f$ is one-dimensional,
    then we obtain equality $\hat q(x)=h(x)$.

    The choices of $U_n$ and the sequence $b_n$ ensure that
    $\sum_{n\ge n_0}\sum_{j=0}^{n-1}\lambda\big(f^j(\hat
    q^{-1}(n))\big)<\infty$; see \cite[Section 3]{AlCasPin}.
    Moreover, in the one-dimensional setting, this series
    coincides with the one in the statement of the lemma.
\end{proof}

Under this summability condition we obtain the following.

\begin{lemma}[Lemma 2.1 in \cite{AlCasPin}]
\label{tempo adaptado finito}
If (\ref{eq:summa}) is true, then $H(x)<\infty$ to
$\lambda$-almost every $x\in M$.
\end{lemma}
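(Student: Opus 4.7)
My plan is to combine the a priori bound (\ref{controle do tempo adaptado}) with the Borel--Cantelli lemma, so that the summability hypothesis (\ref{eq:summa}) enters exactly once. Setting $A_n := \bigcup_{i=0}^{n-1} f^{i}(h^{-1}(n))$, the bound (\ref{controle do tempo adaptado}) reads $H(x) \le \sup\{n\ge 1: x\in A_n\}$. Consequently, if $x$ belongs to only finitely many $A_n$ then $H(x)$ is a finite positive integer; thus $\{H=\infty\}\subset \limsup_{n\to\infty} A_n$, and it suffices to prove that $\lambda(\limsup_n A_n)=0$.

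Next, I would estimate $\lambda(A_n)$ by countable subadditivity,
\begin{align*}
\lambda(A_n) \le \sum_{i=0}^{n-1}\lambda\bigl(f^{i}(h^{-1}(n))\bigr),
\end{align*}
and sum over $n\ge 1$. The hypothesis (\ref{eq:summa}) is exactly the statement that $\sum_{n\ge 1}\lambda(A_n) < \infty$. A direct application of the Borel--Cantelli lemma then yields $\lambda(\limsup_n A_n) = 0$, so $H(x)<\infty$ for $\lambda$-a.e. $x\in M$, as required.

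There is essentially no obstacle in this argument: the hypothesis (\ref{eq:summa}) was designed precisely to make this Borel--Cantelli step work, and the serious analytic content has already been absorbed into the preceding Lemma \ref{le:summa1d}, which showed that in the one-dimensional setting the $L^p$-integrability of $h$ with $p>3$ implies (\ref{eq:summa}). In higher dimensions (\ref{eq:summa}) is simply assumed, precisely so that the present lemma can be invoked in the same uniform way as in dimension one.
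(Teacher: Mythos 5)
Your proof is correct and is essentially the paper's argument in streamlined form: both reduce the statement to showing $\lambda(\limsup_n A_n)=0$ for $A_n=\bigcup_{i=0}^{n-1}f^i(h^{-1}(n))$, and both conclude via (\ref{eq:summa}) and Borel--Cantelli. The paper phrases this through ``chains'' and includes an auxiliary claim that the preimage depths $r_j$ tend to infinity, but that claim is not actually needed — as your argument shows, $s_j\to\infty$ alone already places $z$ in $\limsup_n A_n$ — so your version is the cleaner formulation of the same idea.
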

\begin{proof}
  For $\lambda$-almost every $x\in M$ we consider the set
  $\mathcal{K}(x)=\{f^{j}(x)\}_{j=0}^{h(x)-1}$, which we
  call a \emph{chain}. Suppose that for some $z\in M$ we
  have that $z$ belongs to infinitely many chains
  $\mathcal{K}_{j}(x_{j})=\{x_{j},f(x_{j}),\ldots,f^{s_{j}-1}(x_{j})\}$
  for $j\geq 1$ where $s_{j}=h(x_{j})$ is the first
  hyperbolic time for $x_{j}$ and $s_{j}\rightarrow \infty$.
  
  Now for each $j\geq 1$ we take $1\leq r_{j}<s_{j}$
  such that $z=f^{r_{j}}(x_{j})$ and claim that $\lim
  r_{j}=\infty$. Indeed, otherwise, taking a subsequence of
  $r_{j}$, we can assume that there is $N>0$ such that
  $r_{j_{k}}<N$, $\forall k\geq 1$. But this implies that
  $x_{j}\in\cup_{i=1}^{N}f^{-i}(z)$, $\forall j\geq 1$ and
  so the number of elements of $\cup_{i=1}^{N}f^{-i}(z)$ is finite:
  $\#(\cup_{i=1}^{N}f^{-i}(z))<\infty$. However we are
  assuming that the number of chains is infinite. This
  contradiction proves the claim.

Hence $r_{j}\rightarrow\infty$ and $z=f^{r_{j}}(x_{j})\subset
f^{r_{j}}(h^{-1}(s_{j}))$ and so we get
$$z\in\displaystyle\cup_{n\geq k} \cup_{j=0}^{n-1}f^{j}(h^{-1}(s_{j})),~~
\forall k\geq 0.$$ Since $\sum_{n\geq
  1}\sum_{j=0}^{n-1}\lambda(f^{j}(h^{-1}(n)))<\infty$, we
obtain $\lambda(\cup_{n\geq
  k}\cup_{j=0}^{n-1}f^{j}(h^{-1}(n)))\xrightarrow[k\rightarrow\infty]{}
0$. Then the set of points belonging to infinitely many
chains has null Lebesgue measure. Finally, from relation
$(\ref{controle do tempo adaptado})$ the proof of the lemma
is complete.
\end{proof}

Note that it is not possible ensure that $H(f(x))=H(x)-1$ in
general, because $x$ and $f(x)$ can be in orbits of
different points, namely $z\neq w$ whose first hyperbolic
times do not satisfy the relation $h(w)=h(z)-1$. Then the
adapted hyperbolic time for $f(x)$ can be bigger than
$H(x)-1$.  However, note that $H(f(x))$ can not be smaller
than $H(x)-1$ because $x$ already has $H(x)$ as hyperbolic
time. In any case we have the following important
\emph{monotonicity property}
of our choice of adapted hyperbolic time
\begin{align}\label{eq:adapted-hyp-time}
  H(f(x))\geq H(x)-1.
\end{align}
Similarly we obtain $H(f^{j}(x))\geq H(x)-j$ for $0\le j <
H(x)$ as long as $H(x)$ is finite.

\begin{lemma}[Lemm 5.2 in \cite{ABV00}]\label{lema 5.2}
  Given $\sigma<1$ and $\delta>0$, there is $\delta_{1}>0$
  such that if $n$ is a $(\sigma,\delta)$-hyperbolic time for
  $x\in M\setminus\mathcal{C}$ then there exits a neighborhood
  $V_{n}(x)$ of $x$ such that:
\begin{enumerate}
\item[1.] $f^{n}$ maps $V_{n}$ diffeomorphically into the
  ball of radius $\delta_{1}$ centered at $f^{n}(x)$.
\item[2.] For all $1\leq k<n$ and $y,z\in V_{n}(x)$
$$\mbox{dist}(f^{n-k}(y),f^{n-k}(z))\leq\sigma^{k/2}.\mbox{dist}(f^{n}(y),f^{n}(z)).$$
\end{enumerate}
\end{lemma}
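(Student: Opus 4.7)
My plan is a backward induction on $k$ from $0$ to $n$, constructing nested preimage neighborhoods $U_k$ of $f^{n-k}(x)$ with $U_0 = B(f^n(x),\delta_1)$ and $U_{k+1}=g_{k+1}(U_k)$, where $g_{k+1}$ is the local inverse branch of $f$ defined on a neighborhood of $f^{n-k-1}(x)$. The hyperbolic-time condition $d_\delta(f^{n-k}(x),\mathcal{C})\ge\sigma^{bk}$ keeps every $f^{n-k}(x)$ at positive distance from $\mathcal{C}$, so each $g_j$ is a smooth diffeomorphism onto its image. I then set $V_n(x):=U_n$, and the first conclusion $f^n(V_n)=B(f^n(x),\delta_1)$ is immediate. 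The two inductive hypotheses to maintain are: (i) $U_k\subset B(f^{n-k}(x),\tfrac12 d(f^{n-k}(x),\mathcal{C}))$, so that (S2) applies along orbits starting in $U_k$; and (ii) for $y,z\in U_k$, $d(y,z)\le\sigma^{k/2}\,d(f^k(y),f^k(z))$.

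The inductive step hinges on a distortion bound for $f^k$ along $U_k$. Telescoping and applying (S2),
\begin{align*}
  \big|\log\|Df^k(y)^{-1}\|-\log\|Df^k(f^{n-k}(x))^{-1}\|\big|
  \le
  B\sum_{j=0}^{k-1}\frac{d(f^j(y),f^{n-k+j}(x))}{d(f^{n-k+j}(x),\mathcal{C})^\beta}
\end{align*}
for $y\in U_k$. By (ii) at the earlier level $k-j$, the numerator is bounded by the diameter of $U_{k-j}$, at most $2\delta_1\sigma^{(k-j)/2}$; the hyperbolic-time condition gives $d(f^{n-k+j}(x),\mathcal{C})^\beta\ge\sigma^{b\beta(k-j)}$; and since $b\beta\le\tfrac12$, each summand is bounded by $2B\delta_1\sigma^{(k-j)(1/2-b\beta)}\le 2B\delta_1$. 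The sum converges to a constant independent of $k$ when $b\beta<\tfrac12$, and is at most $2Bk\delta_1$ in the borderline case $b\beta=\tfrac12$.

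Combining with $\prod_{j=n-k}^{n-1}\|Df(f^j(x))^{-1}\|\le\sigma^k$, I obtain $\|Df^k(y)^{-1}\|\le\sigma^k e^{2Bk\delta_1}$; choosing $\delta_1$ so small that $e^{2B\delta_1}\le\sigma^{-1/2}$ forces $\|Df^k(y)^{-1}\|\le\sigma^{k/2}$ uniformly on $U_k$. Integrating along a path in $U_k$ yields the contraction $d(y,z)\le\sigma^{k/2}d(f^k(y),f^k(z))$, closing (ii) at level $k+1$. Hypothesis (i) then follows because, for $z\in U_{k+1}$, $d(z,f^{n-k-1}(x))\le 2\delta_1\sigma^{(k+1)/2}$ while $d(f^{n-k-1}(x),\mathcal{C})\ge\sigma^{b(k+1)}$, and since $b\le\tfrac12$ the first quantity is dominated by half the second once $\delta_1$ is sufficiently small, uniformly in $k$. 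The main obstacle is exactly this uniform-in-$k$ calibration of $\delta_1$: it must simultaneously tame the singularity $1/d(\cdot,\mathcal{C})^\beta$ in (S2), which is controlled only by the slow-recurrence estimate at hyperbolic times, and leave enough room for the geometric contraction rate $\sigma^{k/2}$. The definition $b=\min\{1/2,1/(2\beta)\}$ is precisely tuned so that $b\beta\le\tfrac12$, which is what makes a single choice of $\delta_1$ work for all $k$ and $n$.
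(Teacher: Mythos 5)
Your proof is correct in substance and follows the standard approach of \cite{ABV00} (the paper does not reprove this lemma, only cites it, but uses an analogous backward-preimage induction with the same (S2)-based distortion estimate in the proof of Lemma~\ref{le:innerball}). Two small points worth being careful about. First, (S2) bounds single-step distortions, so the telescoped estimate should read
$\big|\sum_{j=0}^{k-1}\log\|Df(f^j(y))^{-1}\|-\sum_{j=0}^{k-1}\log\|Df(f^{n-k+j}(x))^{-1}\|\big|$
rather than a direct comparison of $\log\|Df^k(y)^{-1}\|$; the conclusion $\|Df^k(y)^{-1}\|\le\sigma^{k/2}$ still follows since, by submultiplicativity, $\|Df^k(y)^{-1}\|\le\prod_j\|Df(f^j(y))^{-1}\|$ (with equality in dimension one). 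Second, your inductive step is logically self-referential as written: establishing the distortion bound on $U_{k+1}$ (hence (ii) at level $k+1$) already requires (i) at level $k+1$ so that (S2) applies at the step $j=0$. This is resolved by the usual open-closed bootstrapping: take the connected component of $f^{-1}(U_k)$ through $f^{n-k-1}(x)$, note that the set of its points at distance $<\tfrac12 d(f^{n-k-1}(x),\mathcal C)$ from $f^{n-k-1}(x)$ is nonempty, open, and (by the self-improving contraction estimate) closed in this component, hence equals it, which simultaneously yields (i) and (ii) at level $k+1$. With these two clarifications the argument is complete and matches the standard proof.
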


By the definition of hyperbolic time, if $n$ is a
$\sigma$-hyperbolic time for a point $x\in M$, then there are
neighborhoods $V_{n-j}\subset B_{\delta_{1}\sigma^{j}}(f^{j}(x))$ of
$f^{j}(x)$ which are sent in time  $j$ diffeomorphically into the ball
$B_{\delta_{1}}(f^{n}(x))$ for all $0\leq j\leq n$.

\begin{lemma}\label{le:Hconst}
  In our setting, for $\lambda$-almost every $x$, there
  exists an open neighborhood $V_H(x)$ of $x$ such that
  $H\mid V_H(x)$ is constant.
\end{lemma}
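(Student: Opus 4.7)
My plan is to show that $H$ is locally constant on a full-measure set, building on two ingredients: the first hyperbolic time map $h$ is locally constant on a full-measure open set, and long chains do not accumulate at typical points. First I would show the local constancy of $h$: the inequalities defining a $(\sigma,\delta)$-hyperbolic time in Definition \ref{def:hyptimes} depend continuously on $z$ off preimages of $\SC$, and at any $z$ where all the inequalities needed to certify $h(z)$ as the first hyperbolic time are strict (both those asserting that $h(z)$ is a hyperbolic time and those ruling out the smaller candidates), they persist in a neighborhood. The exceptional set is a countable union of zero-sets of smooth real-valued functions and hence $\lambda$-null, so $h$ is locally constant on an open full-measure set $G_0$. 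Because $f$ is a local diffeomorphism off the null set $\SC$, preimages of null sets are null, and so the set $G$ of points whose $f^k$-preimages ($k\ge0$) all lie in $G_0$ has full Lebesgue measure.

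Next, set $A_k := \bigcup_{n\ge k}\bigcup_{i=0}^{n-1} f^i(h^{-1}(n))$. By \eqref{eq:summa} (or Lemma \ref{le:summa1d} in dimension one), $\lambda(A_k)\to0$. Since $h^{-1}(n)$ agrees up to a null set with the open set $h^{-1}(n)\cap G_0$ and each $f^i$ is a local diffeomorphism on $M\setminus\SC$, each $A_k$ is essentially open, so $\lambda(\overline{A_k})=\lambda(A_k)\to0$ and $\bigcap_k\overline{A_k}$ is null. Now pick $x$ in the full-measure set where $H(x)<\infty$, $x\in G$, and $x\notin\bigcap_k\overline{A_k}$, and choose $k_0$ and an open neighborhood $V_0$ of $x$ with $V_0\cap A_{k_0}=\emptyset$; then \eqref{controle do tempo adaptado} gives $H(y)<k_0$ for every $y\in V_0$. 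By the chain-finiteness argument of Lemma \ref{tempo adaptado finito}, the supremum defining $H(x)$ is attained by finitely many pairs $(z_1,l_1),\dots,(z_m,l_m)$ with $f^{l_i}(z_i)=x$; each $z_i\in G_0$, and Lemma \ref{lema 5.2} yields local inverses $g_i$ of $f^{l_i}$ near $x$ with $g_i(x)=z_i$. Shrinking $V_0$ to a smaller neighborhood $V$ on which every $g_i$ together with every local inverse of each $f^k$ at each preimage of $x$ (for $k<k_0$) is defined and lands in $G_0$, we conclude: for $y\in V$, the pairs $(g_i(y),l_i)$ yield $H(y)\ge H(x)$; conversely, any pair $(w,k)$ realizing $H(y)$ has $h(w)<k_0$, hence $k<k_0$, and $w$ equals $g(y)$ for some local inverse $g$ of $f^k$ with $g(x)=w_x\in G_0$, so $h(w)=h(w_x)$ by local constancy and $(w_x,k)$ realizes the value $H(y)$ for $x$, giving $H(y)\le H(x)$.

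The hard part is establishing $\lambda(\overline{A_k})\to0$: this requires $A_k$ to be essentially open and so rests entirely on the a.e.\ local constancy of $h$ proved in the first step; without it, long-chain images could in principle accumulate at typical points and no bounded-$H$ neighborhood $V_0$ of $x$ would exist. The remaining bookkeeping — arranging all needed local inverses simultaneously in one small neighborhood, and matching the bijection between pairs for $y$ and pairs for $x$ — is then routine because the bound $k<k_0$ reduces everything to finitely many cases.
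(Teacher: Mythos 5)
Your argument has a genuine gap in the step where you pass from $\lambda(A_k)\to 0$ to $\lambda(\overline{A_k})\to 0$. Being (essentially) open does not give $\lambda(\overline{A_k})=\lambda(A_k)$: an open set can be dense with arbitrarily small measure while its closure has full measure (e.g.\ a small open neighborhood of the rationals). Since the $A_k$ are countable unions of images under $f^i$ of the sets $h^{-1}(n)$, and these images could a priori be dense, nothing forces $\bigcap_k\overline{A_k}$ to be $\lambda$-null. This is precisely the crux of your argument --- without it, you cannot find the neighborhood $V_0$ on which $H$ is bounded by $k_0$, and the rest of the bookkeeping with local inverses has nothing to stand on. Your first step, local constancy of $h$ on a full-measure open set, is also not fully justified: the defining inequalities in Definition \ref{def:hyptimes} involve $\prod\|Df(f^j(\cdot))^{-1}\|$ and $d_\delta(f^{n-k}(\cdot),\SC)$, which are only continuous (not smooth), and even so the set where some inequality is an equality is a level set of a continuous function, which need not be $\lambda$-null; this requires a separate argument.

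The paper's proof takes a different and more robust route. It exploits the hyperbolic-time neighborhoods $V_n(\cdot)$ from Lemma \ref{lema 5.2}: for each preimage $y$ of $x$ with $f^k(y)=x$, the set $f^k(V_{h(y)}(y))$ covers $V_{H(x)}(x)$, and the pulled-back point $y'$ of any $x'\in V_{H(x)}(x)$ lies in $V_{h(y)}(y)$, where $h(y)$ is automatically a hyperbolic time, so $h(y')\le h(y)$ and hence $h(y')-k\le H(x)$. This gives $H\le H(x)$ on $V_{H(x)}(x)$ directly, with the reverse inequality for free, and never needs $h$ to be locally constant nor any control of closures of the tail sets $A_k$. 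If you want to repair your argument, you would need either to replace the closure-measure claim by an actual topological control of $A_k$ near a.e.\ point, or to switch to the neighborhood-pullback mechanism the paper uses.
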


\begin{proof}
  The subset $Y$ of $M$ of points having some hyperbolic
  time is such that $\lambda(Y)=1$. Hence $f^{-1}(Y)$ also
  has full $\lambda$-measure since $f$ is a local
  diffeomorphism away from a critical/singular set with zero
  $\lambda$-measure. Therefore $\lambda(\cap_{n\ge1}(Y\cap
  f^{-n}(Y)))=1$ and we conclude that every point in the
  pre-orbit $\cup_{n\ge1}f^{-n}(\{x\})$ of Lebesgue almost
  every point $x$ has some hyperbolic time.

  Let $X$ be the subset of $M$ such that $H(x)<\infty$ for
  all $x\in X$. We know that $\lambda(X)=1$.

  Let us now fix $x\in Y\cap X$. Hence we have $h(y)<\infty$
  for every point $y$ in the pre-orbit of $x$ and,
  moreover, if $x=f^{k}(y)$ then $h(y)-k\le H(x)$ by
  definition of $H(x)$.

  It follows that the neighborhood $V_{h(y)}(y)$ of $y$
  associated to the hyperbolic time $h(y)$ is such that
  $f^k(V_{h(y)}(y))\supset V_{H(x)}(x)$, since $h(y)-k\le
  H(x)$.

  Therefore, for $x'\in V_{H(x)}(x)\subset f^k(V_{h(y)}(y))$
  the inverse map $\vfi$ of $f^k\mid V_{h(y)}(y)$ is such
  that $\vfi(x')=y'\in V_{h(y)}(y)$. Thus $h(y')\le h(y)$
  (recall that $h(y')$ is the first hyperbolic time of $y'$
  and $h(y)$ is already a hyperbolic time for $y'$). It
  follows that $h(y')-k\le h(y)-k\le H(x)$.

  This argument is true of any element $y$ of the pre-orbit of
  $x$, whose neighborhood $V_{h(y)}(y)$ is sent by $f^k$ to
  a set covering $V_{H(x)}(x)$. Hence all pre-images of
  points $x'\in V_{H(x)}(x)$ respect the same inequality,
  that is, $H(x')\le H(x)$. But the reverse inequality is
  also true by definition of $H$, since $x'\in V_{H(x)}(x)$
  has $H(x)$ as an hyperbolic time. This completes the proof.
\end{proof}

\begin{remark}
  \label{rmk:thadapt}
  We make the convention that $H(x)=1$ wherever the supremum
  in Definition~\ref{th adapt} is not finite.
\end{remark}

\begin{remark}\label{rmk:HnearC}
  Besides the obvious relation $H(x)\ge h(x)$ almost
  everywhere, we can say more in certain regions. Let us
  assume that $V$ is the largest open neighborhood of the
  critical set $\SC$ such that $|Df\mid (M\setminus
  V)|>\sigma^{-1}$ and $V\cap f^{-1}(V)=\emptyset$. Then
  $H=h$ in $V$, since $h(x)\ge2$ for almost all points $x\in
  V$ and all pre-orbits of $x$ have $1$ as a first
  $\sigma$-hyperbolic time, which is smaller that $h(x)-1$.

  The above conditions on a neighborhood of the critical set
  are easily checked for non-uniformly expanding quadratic
  maps and, by \cite[Section 4]{PRV98}, this is also true
  for the infinite-modal family $f_\mu$ at every parameter
  of the positive Lebesgue measure subset $P$; see
  Section~\ref{sec:examples-application}.
\end{remark}


\subsection{Preservation of hyperbolic times}
\label{sec:preserv-hyperb-times}

Now we show that hyperbolic times are preserved if we define
a random perturbation adapted to the structure of hyperbolic
times using $H$, as in (\ref{pertaditiva}) with
$\zeta(x)=\xi e^{-\eta H(x)^2}$ for suitably chosen
constants $\xi,\eta>0$. We first define the notions of
hyperbolic times and slow recurrence in our random setting.

\subsubsection{Random non-uniformly expanding maps and random
  slow recurrence}
\label{sec:random-non-uniformly}

We now define the analogous notions of  non-uniform
expansion and slow recurrence for random dynamical systems
in our setting.

\begin{definition}\label{nue para o caso aleatorio}
  We say that a map is non-uniformly expanding map for
  random orbits if there exists a constant $c>0$ such that
  for $\epsilon>0$ sufficiently small and
  $\theta^{\mathbb{N}}_{\epsilon}\times
  \lambda$-a.e. $(\underline{t},x)$ we
  have
$\limsup_{n\to+\infty}\frac1n
\sum_{j=0}^{n-1}\log\| Df(f^{j}_{\un t}(x))^{-1}\|\leq -c<0.$
\end{definition}

\begin{definition}\label{rec sist alea}
  We say that a random dynamical system
  $(f_{\underline{t}},\theta_{\epsilon})$ has slow
  recurrence to the critical set for random orbits if, for
  all small enough $\gamma>0$,  there exists
  $\delta>0$ such that $\theta^{\mathbb{N}}_{\epsilon}\times
  \lambda$-a.e. $(\underline{t},x)$ we have
$\limsup_{n\to+\infty}\frac{1}{n}\sum_{j=0}^{n-1}-\log
d_{\delta}(f_{\un t}^{j}(x),\SC)\leq \gamma.$
\end{definition}

\subsubsection{Random hyperbolic times}
\label{sec:random-hyperb-times}

An definition of hyperbolic analogous to \ref{def:hyptimes}
can be made for the random system
$(f_{t},\theta_{\epsilon})$.

\begin{definition}[Random Hyperbolic Time]
\label{def:hyptimesrand}
Given $\sigma\in(0,1)$ and $\delta>0$, we say that n is a
$(\sigma,\delta)$-hyperbolic time for a point
$(\underline{t},x)\in \Omega\times M$ if:
\begin{align*}
  \prod_{j=n-k}^{n-1}\|
  Df_{t_{j+1}}(f_{\underline{t}}^{j}(x))^{-1}\|
  \leq\sigma^{k}
  \quad\mbox{and}\quad
  d_{\delta}(f_{\underline{t}}^{n-k}(x),\SC)\geq\sigma^{bk},
  \quad\text{for all}\quad 1\leq k\leq n.
\end{align*}
\end{definition}

\begin{theorem}\label{nuero}
  If $f$ is non-uniformly expanding with slow recurrence to
  the critical set in the interval or the circle, then for
  each $\delta>0$ there is
  $\zeta:M\rightarrow\mathbb{R}^{+}$ mensurable and locally
  constant such that the adapted random perturbation
  $(\ref{pertaditiva})$ satisfies: there exists
  $0<\sigma<\hat\sigma <1$ such that for
  $\lambda$-almost every point $x$ and all
  $\underline t\in[-1/2,1/2]^\nat$ has $H(x)$ as
  $(\hat\sigma,\delta)$-hyperbolic time.
\end{theorem}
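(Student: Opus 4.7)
My plan is to take the noise profile $\zeta(x):=\xi e^{-\eta H(x)^2}$ with positive constants $\xi,\eta$ to be calibrated at the end of the argument. By Lemma~\ref{tempo adaptado finito} the map $H$ is $\lambda$-a.e.\ finite, and by Lemma~\ref{le:Hconst} it is locally constant, so $\zeta$ is measurable and locally constant. The key consequence of local constancy is that, once the random iterate $x_j:=f^j_{\un t}(x)$ lies inside the neighborhood of $y_j:=f^j(x)$ on which $H$ is constant, we have $\zeta(x_j)=\zeta(y_j)$; together with the monotonicity property~\eqref{eq:adapted-hyp-time} this gives the pointwise estimate $\zeta(x_j)\le \xi e^{-\eta(n-j)^2}$ with $n:=H(x)$, which controls the size of the noise uniformly along the relevant piece of orbit.

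The technical heart of the argument is a discrete Gronwall-type bound on the deviation $e_j:=|x_j-y_j|$ in dimension one. Applying the mean value theorem to $x_{j+1}=f(x_j)+t_{j+1}\zeta(x_j)$ with $|t_{j+1}|\le 1/2$, then (S1) together with the hyperbolic-time distance lower bound $d_\delta(y_j,\SC)\ge \sigma^{b(n-j)}$ from Definition~\ref{def:hyptimes} and $b\beta\le \tfrac12$, one obtains $|Df(y_j)|\le B'\sigma^{-(n-j)/2}$; (S2) propagates this bound to nearby points, giving $|f(x_j)-f(y_j)|\le C\sigma^{-(n-j)/2}\,e_j$ whenever $e_j \ll d(y_j,\SC)$. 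The resulting recursion $e_{j+1}\le C\sigma^{-(n-j)/2}\,e_j + \tfrac{\xi}{2}e^{-\eta(n-j)^2}$ with $e_0=0$ yields
\[
e_n\;\le\; \tfrac{\xi}{2}\sum_{m=1}^{n} C^{m-1}\,e^{-\eta m^2}\,\sigma^{-(m-1)m/4},
\]
a series that converges uniformly in $n$ as soon as $\eta>\tfrac14|\log\sigma|$, because the Gaussian decay $e^{-\eta m^2}$ dominates the quadratic-exponential growth $\sigma^{-m^2/4}$. Choosing first $\eta$ large enough and then $\xi$ small enough, $e_j$ remains both below $d(y_j,\SC)/2$ (so the derivative comparison stays valid) and inside the neighborhood on which $H$ is locally constant (so $\zeta(x_j)=\zeta(y_j)$ stays valid); this closes the induction $j\mapsto j+1$ for all $0\le j\le n$.

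With the random orbit trapped in a super-exponentially shrinking tube around $(y_j)$, the two conditions in Definition~\ref{def:hyptimesrand} for $n$ to be a $(\hat\sigma,\delta)$-hyperbolic time for $(\un t,x)$ are a direct perturbation. Remark~\ref{rmk:Dft_Df} gives $Df_{t_{j+1}}=Df$, and then (S2) yields
\[
\Big|\sum_{j=n-k}^{n-1}\bigl(\log\|Df(x_j)^{-1}\|-\log\|Df(y_j)^{-1}\|\bigr)\Big|
\le B\sum_{j=n-k}^{n-1}\frac{e_j}{d(x_j,\SC)^\beta},
\]
which is arbitrarily small once $\xi$ is reduced, by~\eqref{eq:hip-times-prop} and the super-exponential decay of $e_j$; hence $\prod_{j=n-k}^{n-1}\|Df(x_j)^{-1}\|\le \hat\sigma^k$ for any prescribed $\hat\sigma\in(\sigma,1)$. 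Likewise, $d_\delta(x_{n-k},\SC)\ge d_\delta(y_{n-k},\SC)-e_{n-k}\ge \sigma^{bk}-O(\xi e^{-\eta k^2})\ge \hat\sigma^{bk}$. The principal technical obstacle is the Gronwall closure itself: the factor $\sigma^{-(n-j)/2}$ in $|Df(y_j)|$ can grow rapidly along passages of $(y_j)$ near $\SC$, and only a noise amplitude with super-exponential decay in $H$---such as the Gaussian $e^{-\eta H^2}$---can absorb these factors uniformly in the hyperbolic-time horizon $n$; a polynomial or merely exponential $\zeta$ would fail to close the induction, which is what forces the precise shape of $\zeta$.
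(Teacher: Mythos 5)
Your approach is essentially the one the paper takes: you pick the same noise profile $\zeta(x)=\xi e^{-\eta H(x)^2}$ (the paper uses $\xi\,\omega^{-\eta H(x)^2}$ with $\omega>\sigma^{-1/2}$, an immaterial change of base), you track the deviation $e_j=|f^j_{\un t}(x)-f^j(x)|$ by an induction equivalent to your discrete Gronwall recursion, you use the same derivative bound $|Df|\le C\sigma^{-(n-j)/2}$ on the hyperbolic-time neighbourhoods, and you close with the same Gaussian-versus-quadratic-exponential comparison and the same perturbative check of the two conditions in Definition~\ref{def:hyptimesrand}.

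There is, however, a gap in the step that bounds the noise amplitude along the random orbit. You argue: once $x_j$ lies inside the neighbourhood of $y_j:=f^j(x)$ on which $H$ is constant, one has $\zeta(x_j)=\zeta(y_j)$, and then monotonicity~\eqref{eq:adapted-hyp-time} gives $\zeta(x_j)\le\xi e^{-\eta(n-j)^2}$. The difficulty is that, by the proof of Lemma~\ref{le:Hconst}, the neighbourhood of constancy of $H$ at $y_j$ is the hyperbolic-time neighbourhood $V_{H(y_j)}(y_j)$, whose radius is of order $\delta_1\omega^{-H(y_j)}$. Monotonicity gives only the lower bound $H(y_j)\ge H(x)-j$; $H(y_j)$ can be arbitrarily much larger than $H(x)-j$, so this neighbourhood can be far smaller than what your inductive bound $e_j\lesssim\xi e^{-\eta(H(x)-j)^2}$ (or, equivalently, membership in $V_{H(x)-j}(y_j)$) guarantees. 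Hence the claimed equality $\zeta(x_j)=\zeta(y_j)$ cannot be closed inductively. The paper sidesteps this by pulling back through the branch of $f^{-j}$ at $x$: since the induction places $x_j$ in $V_{H(x)-j}(y_j)=f^j\big(V_{H(x)}(x)\big)$, there is $z\in V_{H(x)}(x)$ with $f^j(z)=x_j$; local constancy of $H$ is invoked only at $x$ (giving $H(z)=H(x)$), and then~\eqref{eq:adapted-hyp-time} applied along the unperturbed orbit of $z$ yields $H(x_j)=H(f^j(z))\ge H(z)-j=H(x)-j$. This one-sided bound, $\zeta(x_j)\le\xi e^{-\eta(H(x)-j)^2}$, is exactly what your Gronwall needs, and it is all you can or should hope to obtain. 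Replace the $\zeta(x_j)=\zeta(y_j)$ claim by this inverse-branch argument and the rest of your proof goes through and coincides with the paper's.
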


We assume that $f$ has a non-degenerate critical set
$\mathcal C$. We also assume without loss of generality in
what follows that $B\delta^{1-\beta}\le\log\sigma^{-1/2}$
and $\delta_1=\frac12\delta\le\frac12$, where $B,\beta>0$
are given in the non-degeneracy conditions of $\mathcal
C$.

\begin{remark}\label{rmk:localdiffeo}
  The same arguments and constructions presented in
  this section enable us to trivially obtain a version
  of Theorem~\ref{nuero} for the local diffeomorphism
  case, that is, the case where there are no critical
  (or singular) points: $\mathcal C=\emptyset$.
\end{remark}

\begin{remark}
  \label{rmk:hbdd}
  Since by construction $h(x)\le H(x)$, whenever $h(x)$ is
  finite, then we have for $\lambda$-a.e. $x$ that
  $V_H(x)\subset V_n(x)$ for all hyperbolic times $n$ of $x$
  such that $h(x)\le n\le H(x)$. 

  Moreover, we have that the random orbit of $(\un t,x)$ has
  the same hyperbolic times $n$ of the unperturbed orbit of
  $x$ as long as $h(x)\le n\le H(x)$. In particular, the
  first hyperbolic time of $(\un t,x)$ is given by $h(x)$.
\end{remark}

\begin{lemma}
  \label{le:bdd-der-above}
  There exists $\omega>\sigma^{-1/2}$ such that, if $n$ is a
  $(\sigma,\delta)$-hyperbolic time for $x$, then
  $\|Df^n(x)\|\le\omega^n$. 
\end{lemma}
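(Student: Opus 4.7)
The plan is to bound $\|Df^n(x)\|=|Df^n(x)|=\prod_{j=0}^{n-1}|Df(f^j(x))|$ directly, applying condition (S1), $|Df(y)|\le B\,d(y,\SC)^{-\beta}$, to each factor. Summing gives
\[
\log|Df^n(x)|\le n\log B+\beta\sum_{j=0}^{n-1}(-\log d(f^j(x),\SC)).
\]
A pointwise case split using $\delta<1$ shows that $d(y,\SC)\ge \delta\cdot d_\delta(y,\SC)$ (cases $d<\delta$ and $d\ge \delta$), so $-\log d(y,\SC)\le -\log\delta-\log d_\delta(y,\SC)$. For the $d_\delta$ sum I would invoke the summed slow-recurrence estimate~(\ref{eq:hip-times-prop}) from Lemma~\ref{le:infhyptimes}: taking $k=n$ yields $\sum_{j=0}^{n-1}(-\log d_\delta(f^j(x),\SC))\le bn\log\sigma^{-1}$. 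Combining these estimates gives
\[
\log|Df^n(x)|\le n\bigl(\log B-\beta\log\delta+b\beta\log\sigma^{-1}\bigr),
\]
so $\omega:=B\,\delta^{-\beta}\sigma^{-b\beta}$ produces $|Df^n(x)|\le\omega^n$. Since $b\beta\le 1/2$, $B>1$ and $\delta^{-\beta}\ge 1$, this $\omega$ is finite; replacing it by $\max\{\omega,2\sigma^{-1/2}\}$ if necessary makes $\omega>\sigma^{-1/2}$ as required, without affecting the upper bound.

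The main obstacle is producing a linear-in-$n$ bound for $\sum(-\log d_\delta(f^j(x),\SC))$. The bare pointwise condition $d_\delta(f^{n-k}(x),\SC)\ge\sigma^{bk}$ appearing in Definition~\ref{def:hyptimes}, when summed naively over $k=1,\dots,n$, yields only the quadratic bound $\tfrac{1}{2}b\,n(n+1)\log\sigma^{-1}$, which would force $|Df^n(x)|$ to be exponential in $n^2$ rather than in $n$ and would be useless here. The linear bound is available precisely because the hyperbolic times under consideration are those produced by the ABV construction of Lemma~\ref{le:infhyptimes}, where the stronger summed estimate~(\ref{eq:hip-times-prop}) holds by construction; it is worth flagging this at the start of the proof, and the calculation above then goes through exactly as indicated.
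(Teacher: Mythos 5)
Your proof is correct and follows essentially the same route as the paper: bound each factor $|Df(f^j(x))|$ via (S1), pass from $d$ to the truncated distance $d_\delta$ at a linear cost $-n\log\delta$, and invoke the summed slow-recurrence estimate~(\ref{eq:hip-times-prop}) with $k=n$ to obtain a linear-in-$n$ bound on $\sum_{j<n}(-\log d_\delta(f^j(x),\SC))$, giving $\omega=B\delta^{-\beta}\sigma^{-b\beta}$. Your closing remark---that the pointwise condition in Definition~\ref{def:hyptimes} alone would only yield a quadratic-in-$n$ bound, so the summed estimate~(\ref{eq:hip-times-prop}) supplied by the ABV construction is essential---is exactly the point the paper handles tersely with ``we have from their construction that they satisfy~(\ref{eq:hip-times-prop}),'' and flagging it explicitly is a good idea.
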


\begin{proof}
  Using the non-degenerate condition (S1) we get
  $\log\|Df(x)\|\le\log B-\beta\log d(x,\SC)$. Hence,
  since $n$ is a hyperbolic time, we have from their
 construction that they satisfy
 (\ref{eq:hip-times-prop}) which implies
 \begin{align*}
   \log\|Df^n(x)\|
   &\le
   \sum_{j=0}^{n-1}\log\|Df(f^j(x))\|
   \le
   n\log B-\beta\sum_{j=0}^{n-1} \log d(f^j(x),\SC)
   \\
   &\le
   \log B^n +\beta\sum_{j=0}^{n-1} -\log
   d_\delta(f^j(x),\SC)
   +\beta\sum_{d(f^j(x),\SC)\ge\delta} -\log d(x,\SC)
   \\
   &\le
   \log B^n +\beta\epsilon n -\beta n \log\delta
   =
   n(\log B +\beta(\epsilon-\log\delta))
 \end{align*}
 and so $\|Df^n(x)\|\le\omega^n$, where $\omega=\max\{\log B
 +\beta(\epsilon-\log\delta), \sigma^{-1/2}\}$. 
\end{proof}

\begin{lemma}\label{le:innerball}
  If $n$ is a $(\sigma,\delta)$-hyperbolic time for $x$, then
  $B_{\delta_{1}\omega^{-(n-j)}}(f^{j}(x))\subset
  V_{n-j}(f^j(x))\subset
  B_{\delta_1\sigma^{(n-j)/2}}(f^{j}(x))$ for each $0\leq
  j\leq n$.
\end{lemma}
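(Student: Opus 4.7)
My plan is to prove the two inclusions separately. A preliminary observation is that if $n$ is a $(\sigma,\delta)$-hyperbolic time for $x$, then $n-j$ is a $(\sigma,\delta)$-hyperbolic time for $f^j(x)$ for every $0\le j\le n$, by a direct re-indexing in Definition \ref{def:hyptimes}. Hence both Lemma \ref{lema 5.2} and Lemma \ref{le:bdd-der-above} apply at $f^j(x)$ with exponent $n-j$, providing the diffeomorphic neighborhood $V_{n-j}(f^j(x))$ mapped onto $B_{\delta_1}(f^n(x))$ by $f^{n-j}$, and the derivative bound $\|Df^{n-j}(f^j(x))\|\le\omega^{n-j}$, both of which we will need.

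The outer inclusion $V_{n-j}(f^j(x))\subset B_{\delta_1\sigma^{(n-j)/2}}(f^j(x))$ follows immediately from Lemma \ref{lema 5.2}(2) applied with $y\in V_{n-j}(f^j(x))$, $z=f^j(x)$, and $k=n-j$: one obtains $d(y,f^j(x))\le\sigma^{(n-j)/2}\cdot d(f^{n-j}(y),f^n(x))\le\sigma^{(n-j)/2}\delta_1$, since $f^{n-j}(y)\in B_{\delta_1}(f^n(x))$.

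For the inner inclusion $B_{\delta_1\omega^{-(n-j)}}(f^j(x))\subset V_{n-j}(f^j(x))$, my strategy is: first use Lemma \ref{le:bdd-der-above} to get $\|Df^{n-j}(f^j(x))\|\le\omega^{n-j}$, then extend this to $\|Df^{n-j}(w)\|\le K\omega^{n-j}$ on a neighborhood of $f^j(x)$ via bounded distortion (absorbing the constant $K$ into $\omega$ by enlarging it if necessary, while preserving $\omega>\sigma^{-1/2}$), and finally invoke the mean value inequality to conclude that $f^{n-j}$ sends the ball $B_{\delta_1\omega^{-(n-j)}}(f^j(x))$ into $B_{\delta_1}(f^n(x))$. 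Since this ball is connected and contains $f^j(x)$, it must then lie inside the connected component $V_{n-j}(f^j(x))$ of $f^{-(n-j)}(B_{\delta_1}(f^n(x)))$ through $f^j(x)$.

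The main obstacle is the bounded distortion step. To carry it out, I would run a bootstrap along the orbit: inductively show that $f^i(w)$ stays within a distance of order $\sigma^{(n-j-i)/2}\delta_1$ of $f^{i+j}(x)$, so that condition (S2), combined with the hyperbolic-time lower bound $d_\delta(f^{i+j}(x),\SC)\ge\sigma^{b(n-j-i)}$, yields a telescoping summable estimate for $\sum_i\bigl|\log\|Df(f^i(w))\|-\log\|Df(f^{i+j}(x))\|\bigr|$. The choice $b=\min\{1/2,1/(2\beta)\}$ from Definition \ref{def:hyptimes} is precisely what ensures convergence of the relevant geometric series against the growth $d(f^{i+j}(x),\SC)^{-\beta}$ allowed by (S1)--(S2). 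This is the standard bounded distortion argument along hyperbolic orbits used throughout \cite{ABV00}, which both closes the induction on the iterates and produces the required uniform derivative bound, completing the proof.
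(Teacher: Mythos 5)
Your overall strategy—reduce to $f^j(x)$ via the re-indexing observation, get the outer inclusion from the contraction property of hyperbolic times, and get the inner inclusion from the upper derivative bound of Lemma~\ref{le:bdd-der-above} plus distortion—is in the same spirit as the paper. Structurally you differ: the paper proceeds by induction on $n$ and works with curve lengths, deducing the outer inclusion by tracking $\|Df^{n+1}(y)^{-1}\|$ along pulled-back curves inside $V_{n+1}(x)$ using (\ref{eq:df-y}), whereas you invoke Lemma~\ref{lema 5.2}(2) directly (note that as stated it only covers $1\le k<n$, so $k=n-j$ is a boundary case one has to extend to) and run a direct mean-value argument for the inner inclusion. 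Your re-indexing observation that $n-j$ is a $(\sigma,\delta)$-hyperbolic time for $f^j(x)$ is correct and cleanly replaces the paper's step of passing to $f(x)$ in the induction.

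The genuine gap is in your distortion step. You claim that (S2) together with $d_\delta(f^{i+j}(x),\SC)\ge\sigma^{b(n-j-i)}$ gives a \emph{convergent} geometric series for $\sum_i\big|\log\|Df(f^i(w))\|-\log\|Df(f^{i+j}(x))\|\big|$, hence a fixed distortion constant $K$. This is only true when $\beta b<1/2$ strictly, i.e.\ $\beta<1$. When $\beta\ge1$ the choice $b=1/(2\beta)$ makes $\beta b=1/2$ exactly, the exponent $1/2-\beta b$ vanishes, and each term of the sum is bounded only by the same constant (roughly $B\delta_1$, tuned so as not to exceed $\log\sigma^{-1/2}$); the sum then grows \emph{linearly} in $n-j$, giving a distortion factor of the form $\sigma^{-(n-j)/2}$, not a constant. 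The paper's estimate (\ref{eq:df-y}) encodes exactly this: a per-iterate distortion factor of $\sigma^{-1/2}$. Your ``absorb into $\omega$'' move still rescues the argument—one should replace $\omega$ by $\omega'=\sigma^{-1/2}\omega$ rather than $K\omega$, which is consistent with the existential form of Lemma~\ref{le:bdd-der-above} and the requirement $\omega>\sigma^{-1/2}$—but the justification you give (summability of the distortion series) is incorrect in the case $\beta\ge1$, so the step as written does not close.
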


This result is essential to show that to keep the hyperbolic
time under perturbation all that we need is to maintain the
random orbits within a certain distance to the unperturbed
orbit during the iterated of the adapted hyperbolic time.

\begin{proof}[Proof of Lemma~\ref{le:innerball}]
  We have
  $d_{\delta}(f^{j}(x),\mathcal{C})\geq\sigma^{b(n-j)}$
  for all $0\leq j\leq n$ and so either
  $d(f^{j}(x),\mathcal{C})\geq\sigma^{b(n-j)}$ with
  $f^j(x)\in B_\delta(\mathcal C)$, or
  $d(f^{j}(x),\mathcal{C})\geq\delta$.

  Hence for $y\in B_{\delta_{1}\sigma^{(n-j)/2}}(f^{j}(x))$
  we have either
  $\frac{d(y,f^{j}(x))}{d(f^{j}(x),\mathcal{C})}\le
  \delta_1\sigma^{(1/2-b)(n-j)}\le\frac12$ or
  $\frac{d(y,f^{j}(x))}{d(f^{j}(x),\mathcal{C})}\le
  \frac{\delta_1}{\delta}\sigma^{(n-j)/2}\le\frac12$ for
  $0\leq j\leq n$ (recall that $0<b\le1/2$ from the
  definition of non-degenerate critical set).  This enables
  us to use non-degeneracy conditions (S1) and (S2).

For $y\in B_{\delta_{1}\sigma^{(n-j)/2}}(f^{j}(x))$
since $b\beta\le1/2$, the value of
$B\frac{d(y,f^{j}(x))}{d(f^{j}(x),\mathcal{C})^{\beta}}$ is
bounded above by either
$B\delta_1\sigma^{(1/2-b\beta)(n-j)/2}$ or
$B\delta_1\delta^{-\beta}\sigma^{(n-j)/2}=\frac{B}2\delta^{1-\beta}\sigma^{(n-j)/2}$,
and both are smaller than $\log\sigma^{-1/2}$.  Thus from
(S2) for all $y\in B_{\delta_{1}\sigma^{(n-j)/2}}(f^{j}(x))$
\begin{align}\label{eq:df-y}
   \sigma^{1/2}\|Df(f^{j}(x))^{-1}\| \le\| Df(y)^{-1}\|\le\sigma^{-1/2}\|
Df(f^{j}(x))^{-1}\|.
\end{align}
For $j= n-1$ above, we get for every $y\in
B_{\delta_{1}\sigma^{1/2}}(f^{n-1}(x))$
$$\sigma^{1/2}=\sigma^{-1/2}\|Df(f^{n-1}(x))^{-1}\|
\ge\|Df(y)^{-1}\|\ge\sigma^{1/2}\|Df(f^{n-1}(x))^{-1}\|\ge\sigma^{3/2}.$$
Hence, a smooth curve $\gamma$ from $f^n(x)$ to the boundary
of $B_{\delta_1}(f^n(x))$ and inside this ball must be such
that the unique curve $\tilde\gamma$ contained in
$V_{1}(f^{n-1}(x))$ such that $f^{n-1}(x)\in\tilde\gamma$
and $f(\tilde\gamma)=\gamma$ satisfies
$\sigma^{3/2}\delta_1=\sigma^{3/2}\ell(\gamma)\le\ell(\tilde\gamma)\le
\sigma^{1/2}\ell(\gamma)=\delta_1\sigma^{1/2}$, where
$\ell(\cdot)$ denotes the length of any smooth curve on $M$
and, recall, $f^{n-j}\mid
V_{n-j}(f^{j}(x)):V_{n-j}(f^{j}(x))\to B_{\delta_1}(f^n(x))$
is a diffeomorphism for all $j=0,\dots,n-1$. Thus
$B_{\delta_1\sigma^{1/2}}(f^{n-1}(x))\supset
V_{1}(f^{n-1}(x))\supset
B_{\delta_1\sigma^{3/2}}(f^{n-1}(x))$. In particular this
shows that the statement of the Lemma is true for
$n=1$, since $\omega>\sigma^{-1/2}$.

Now we argue by induction assuming the Lemma to be true for
all hyperbolic times up to some $n\ge1$ and consider $x$
having $n+1$ as a hyperbolic time. Then for each $1\leq j <n$
\begin{align*}
  B_{\delta_{1}\omega^{-{n-j}}}(f^{j}(x))\subset
  V_{n-j}(f^j(x))\subset
  B_{\delta_1\sigma^{(n-j)/2}}(f^{j}(x))
\end{align*}
since $f(x)$ has $n$ as a hyperbolic time.  For all $y\in
V_{n+1}(x)\cap B_{\delta_1\sigma^{(n+1)/2}}(x)$ we have $f(y)\in
V_1(f(x))$ and so by the induction assumption together with (\ref{eq:df-y})
\begin{align*}
  \|Df^{n+1}(y)^{-1}\|
  \le\prod_{i=0}^{n}\|Df(f^i(y))^{-1}\|
  \le\prod_{i=0}^{n}(\sigma^{-1/2}\|Df(f^i(x))^{-1}\|)
  \le \sigma^{(n+1)/2}
\end{align*}
Therefore, for any smooth curve $\gamma$ from $f^{n+1}(x)$
to the boundary of $B_{\delta_1}(f^{n+1}(x))$ and inside
this ball we have that the unique curve $\tilde\gamma$ contained in
$V_{n}(x)\cap B_{\delta_1\sigma^{(n+1)/2}}(x)$ such that $x\in\tilde\gamma$
and $f^{n+1}(\tilde\gamma)=\gamma$ satisfies
$\ell(\tilde\gamma)\le
\sigma^{(n+1)/2}\ell(\gamma)=\delta_1\sigma^{(n+1)/2}$. Hence
$V_{n+1}(x)\subset B_{\delta_1\sigma^{(n+1)/2}}(x)$. 

Finally, from Lemma~\ref{le:bdd-der-above}, we obtain
for the same curves $\gamma,\tilde\gamma$ as above
$\ell(\gamma)=\ell(f^{n+1}\circ\tilde\gamma)\le\omega^{n+1}\ell(\tilde\gamma)$,
or
$\ell(\tilde\gamma)\ge\omega^{-(n+1)}\ell(\gamma)$. Since
this holds for any smooth curve $\gamma$ from
$f^{n+1}(x)$ to the boundary of
$B_{\delta_1}(f^{n+1}(x))$ and inside this ball, we
conclude that $V_{n+1}(x)$ contains
$B(x,\delta_1\omega^{-(n+1)})$. This completes the
inductive step and concludes the proof.
\end{proof}

\begin{remark}
  \label{rmk:bdderivative}
  From condition (S1) we obtain using the estimate (\ref{eq:df-y})
  \begin{align*}
    |Df(y)^{-1}|\ge\sigma^{1/2}|Df(f^j(x))^{-1}|\ge\frac{\sigma^{1/2}}B
    d(f^j(x),\mathcal C)^{\beta}
    \ge \frac{\sigma^{1/2}}B \sigma^{\beta b (n-j)}
    \ge \frac{\sigma^{1/2}}B \sigma^{(n-j)/2}
  \end{align*}
because $b\beta\le1/2$. Then we arrive at
\begin{align*}
  |Df(y)|\le C\sigma^{-(n-j)/2}, \quad y\in V_{n-j}(f^j(x))
\end{align*}
where $C=B\sigma^{-1/2}$, whenever $x$ has $n\ge1$ as an
hyperbolic time and $0\le j < n$.
\end{remark}

\begin{proposition}\label{pr:rand-hyp-inside}
  Let $f$ is a $C^2$ non-uniformly expanding
  endomorphism having slow recurrence to the critical
  set. There exist constants $\xi,\eta>0$ such that for
  $\zeta(x)=\xi \omega^{-\eta H(x)^2}$ and the family
  $f_t(x)=f(x)+t\cdot\zeta(x)$, if $x$ is such that
  $H(x)$ is a hyperbolic time, then we have
  $f_{\underline{t}}^{j}(x)\in V_{H(x)-j}(x)$ for all
  $0\leq j\leq H(x)$ and each
  $\underline{t}\in\Omega\subset[-1/2,1/2]^{\mathbb
    N}$.

  In particular, $H(x)$ is a
  $(\hat\sigma,\delta)$-hyperbolic time for
  $(\underline t,x)\in\Omega\times M$ whenever
  $H(x)<\infty$, for a constant
  $0<\sigma<\hat\sigma<1$.

  Moreover, if
  $\Omega\subset[-\epsilon_0,\epsilon_0]^\nat$ for some
  $0<\epsilon_0<1/2$ and $H(x)<\infty$, then
  $f_{\underline{t}}^{j}(x)\in
  B_{\epsilon_0\delta_1\omega^{-\eta(H(x)-j)}}(f^j(x))$
  for each $0\leq j\leq H(x)$ and for all $\underline
  t\in\Omega$.
\end{proposition}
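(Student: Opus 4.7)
The plan is to run a forward induction on $j = 0, 1, \ldots, H(x)$ to show
\[
|y_j - f^j(x)| \le \epsilon_0\delta_1\omega^{-\eta(H(x)-j)},
\]
where $y_j := f_{\underline t}^j(x)$. Once this holds, the inclusion $y_j \in V_{H(x)-j}(f^j(x))$ follows from Lemma~\ref{le:innerball} as long as $\eta \ge 1$. The base case $j=0$ is immediate since $y_0=x$.

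For the inductive step, decompose
\[
|y_{j+1} - f^{j+1}(x)| \le |Df(y_j)|\cdot|y_j - f^j(x)| + |t_{j+1}|\cdot\zeta(y_j).
\]
The inductive hypothesis together with Remark~\ref{rmk:bdderivative} bounds the derivative by $C\sigma^{-(H(x)-j)/2}$ on $V_{H(x)-j}(f^j(x))$. For the perturbation term I would invoke Lemma~\ref{le:Hconst}: $\zeta$ is locally constant around $f^j(x)$, so under the inductive bound $\zeta(y_j) = \zeta(f^j(x))$, and the monotonicity (\ref{eq:adapted-hyp-time}) gives $H(f^j(x)) \ge H(x)-j$, hence $\zeta(y_j) \le \xi\omega^{-\eta(H(x)-j)^2}$. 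This \emph{quadratic} decay of $\zeta$ in the remaining hyperbolic time is the crux: unrolling the recursion from $y_0$ produces
\[
|y_j - f^j(x)| \le \epsilon_0\xi\sum_{i=0}^{j-1}\omega^{-\eta(H(x)-i)^2}\prod_{k=i+1}^{j-1}C\sigma^{-(H(x)-k)/2},
\]
and since $\omega>\sigma^{-1/2}$ by Lemma~\ref{le:bdd-der-above}, one can pick $\eta$ large enough for the quadratic factor to dominate the product of expansion factors; $\xi$ is then chosen small so that the whole sum is bounded by $\delta_1\omega^{-\eta(H(x)-j)^2}\le \delta_1\omega^{-\eta(H(x)-j)}$.

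Once $y_j \in V_{H(x)-j}(f^j(x))$ for all $0\le j\le H(x)$, the assertion that $H(x)$ is a $(\hat\sigma,\delta)$-hyperbolic time for $(\underline t,x)$ is a short computation: by Remark~\ref{rmk:Dft_Df}, $Df_{t_{j+1}}(y_j)=Df(y_j)$, and (\ref{eq:df-y}) gives $\|Df(y_j)^{-1}\|\le\sigma^{-1/2}\|Df(f^j(x))^{-1}\|$, hence
\[
\prod_{j=H(x)-k}^{H(x)-1}\|Df(y_j)^{-1}\| \le \sigma^{-k/2}\cdot\sigma^k = \sigma^{k/2},
\]
so $\hat\sigma = \sigma^{1/2}$ works for the expansion requirement. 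The slow recurrence condition at $y_{H(x)-k}$ follows because the inductive error bound is negligible compared to $d(f^{H(x)-k}(x),\SC)\ge\sigma^{bk}$, so $d(y_{H(x)-k},\SC)\ge\frac12\sigma^{bk}$ after a harmless adjustment of $\hat\sigma$.

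The hardest part will be the careful calibration of $\xi$ and $\eta$ in the telescoping estimate, together with verifying that $y_j$ genuinely remains inside the locally constant neighborhood of $\zeta$ around $f^j(x)$ throughout the induction, so that the replacement $\zeta(y_j)=\zeta(f^j(x))$ is legitimate at every step. The $H(x)^2$ exponent in the definition of $\zeta$ is exactly what is required: it produces perturbations small enough to be absorbed by as many as $H(x)$ iterations of a derivative whose worst-case size on $V_{H(x)-j}(f^j(x))$ can grow like $\sigma^{-(H(x)-j)/2}$.
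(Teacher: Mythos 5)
Your overall strategy is the same as the paper's: a forward induction on $j$, with the quadratic decay $\omega^{-\eta(H(x)-j)^2}$ of the perturbation defeating the at most $C\sigma^{-(H(x)-j)/2}$ growth of the derivative on $V_{H(x)-j}(f^j(x))$, followed by the use of~(\ref{eq:df-y}), Remark~\ref{rmk:Dft_Df} and a triangle inequality at the critical set to conclude that $H(x)$ is a $(\hat\sigma,\delta)$-hyperbolic time for the random orbit. The unrolled sum you write down is just the closed-form of the paper's two-term recursive estimate, and the final part of your argument matches~(\ref{eq:pert-prod})--(\ref{eq:dist-crit-ratio}).

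There is, however, a genuine gap in your justification of the bound $\zeta(y_j)\le\xi\omega^{-\eta(H(x)-j)^2}$, and you yourself flag it as the hard point without closing it. You propose to use Lemma~\ref{le:Hconst} to claim $\zeta(y_j)=\zeta(f^j(x))$, i.e. that $y_j$ lies in the locally constant neighbourhood $V_H(f^j(x))$ of $\zeta$ around $f^j(x)$. But from the proof of Lemma~\ref{le:Hconst} this neighbourhood is $V_{H(f^j(x))}(f^j(x))$, and since $H(f^j(x))$ can be strictly larger (indeed, arbitrarily larger) than $H(x)-j$, the inclusion $V_{H(f^j(x))}(f^j(x))\subset V_{H(x)-j}(f^j(x))$ forces the constancy neighbourhood to be \emph{smaller} than the set you control by the inductive bound. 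So the replacement $\zeta(y_j)=\zeta(f^j(x))$ does not follow and cannot in general be salvaged along that route. The paper closes the gap differently: it does not try to compare $\zeta$ at $y_j$ and at $f^j(x)$, but instead bounds $H(y_j)$ directly. Since $y_{j}\in V_{H(x)-j}(f^{j}(x))=f^{j}\bigl(V_{H(x)}(x)\bigr)$, there is a unique $z\in V_{H(x)}(x)$ with $f^{j}(z)=y_{j}$; Lemma~\ref{le:Hconst} applied to $z$ (which lies in $V_{H(x)}(x)=V_H(x)$, where constancy \emph{is} known) gives $H(z)=H(x)$, and iterating the monotonicity property~(\ref{eq:adapted-hyp-time}) yields $H(y_{j})=H(f^{j}(z))\ge H(z)-j=H(x)-j$, hence $\zeta(y_j)\le\xi\omega^{-\eta(H(x)-j)^2}$. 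This makes the second component of the induction a statement about $H(y_j)$ itself (proved by pullback along the hyperbolic branch), not about $\zeta$ being literally constant across the perturbation; it is this pullback step that is missing from your proposal.
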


\begin{proof}
  Let $\eta>3/2$ be big enough such that
  $\max\{C\sigma^{2\eta-1/2},\sigma^{2\eta}\}<1/2$, choose
  $\xi=\min\{\delta_1/2,1/2\}$ and fix $\underline
  t=(t_1,t_2,\dots)\in\Omega$. Then
  \begin{align*}
    |f_{t_{1}}(x)-f(x)|\leq|t_{1}\zeta(x)|<\xi
    \omega^{-\eta H(x)^2}<\delta_1\omega^{-(H(x)-1)} 
  \end{align*}
  and so $f_{t_1}(x)\in
  B_{\delta_1\omega^{-(H(x)-1)}}(f^{H(x)-1}(x))\subset
  V_{H(x)-1}(f(x))$. Observe that there is $z\in
  V_{H(x)}(x)$ such that $f_{t_1}(x)=f(z)$ and so
  $H(f_{t_1}(x))=H(f(z))\ge H(z)-1=H(x)-1$.

  Now we argue by induction on $k$ and assume that for $1\le
  j\le k<n-1$ we have 
  \begin{enumerate}
  \item $f^j_{\underline t}(x)\in
    B_{\xi\omega^{-\eta(H(x)-j)^2}}(f^j(x))\subset
    V_{H(x)-j}(f^j(x))$, and
  \item $H(f^j_{\underline t}(x))\ge H(x)-j$.
  \end{enumerate}
  It is easy to see that this is true for $k=1$.  For
  $j=k+1$ we get, for some $w\in
  B_{\delta_1\omega^{-\eta(H(x)-k)}}(f^k(x))$ in a segment
  between $f^k_{\underline t}(x)$ and $f^k(x)$, according to
  Remark~\ref{rmk:bdderivative}
  \begin{align*}
    |f^{k+1}_{\underline t}(x)-f^{k+1}(x)|
    &\le
    |f_{t_{k+1}}(f^k_{\underline t}(x))-f(f^k_{\underline
      t}(x))|+|f(f^k_{\underline t}(x)) -f(f^k(x))|
    \\
    &\le
    |t_{k+1}\zeta(f^k_{\underline t}(x))|+
    |Df(w)|\cdot|f^k_{\underline t}(x)-f^k(x)|
    \\
    &<
    \xi\omega^{-\eta H(f^k_{\underline t}(x))^2}
    + C\sigma^{-(H(x)-k)/2}\cdot\xi\omega^{-\eta(H(x)-k)^2}
    \\
    &\le
    \xi\omega^{-\eta(H(x)-k)^2}(1+C\sigma^{-(H(x)-k)/2})
    \\
    &=
    \xi\omega^{-\eta(H(x)-k-1)^2}\omega^{-\eta(2(H(x)-k)+1)}
    (1+C\sigma^{-(H(x)-k)/2})
    \\
    &\le
    \xi\omega^{-\eta (H(x)-k-1)^2}
    (\omega^{-\eta(2(H(x)-k)+1)}+C\sigma^{(2\eta-1/2)(H(x)-k)})
    \\
    &\le\xi\omega^{-\eta (H(x)-k-1)^2}.
  \end{align*}
  The last inequality comes from the choice of $\eta$
  and because $H(x)-k\ge1$ and
  $\omega>\sigma^{-1/2}>\sigma^{-1}$. This proves that
  part (1) of the inductive step. Then there exists
  $z\in V_{H(x)}(x)$ such that
  $f^{k+1}(z)=f^{k+1}_{\underline t}(x)$ and so
  $H(f^{k+1}_{\underline
    t}(x))=H(f^{k+1}(z))=H(z)-(k+1)=H(x)-(k+1)$,
  completing the proof of the inductive step.

Now we check that $H(x)$ is still a hyperbolic time for
$(\underline t,x)$. This follows easily from the statement
of Proposition~\ref{pr:rand-hyp-inside} together with the
estimate~(\ref{eq:df-y}) and
Remark~\ref{rmk:Dft_Df}. However we have to relax the
constants: for $1\le k <H(x)$
\begin{align}\label{eq:pert-prod}
  \prod_{j=n-k}^{H(x)-1} |Df_{t_{j+1}}(f^j_{\underline
    t}(x))^{-1}|
  =
  \prod_{j=n-k}^{H(x)-1}|Df(f^j_{\underline t}(x))^{-1}|
  \le
  \prod_{j=n-k}^{H(x)-1}(\sigma^{-1/2}|Df(f^j(x))^{-1}|)
  \le
  \sigma^{k/2}
\end{align}
and
\begin{align}\label{eq:dist-crit-ratio}
  d(f^{H(x)-j}_{\underline t}(x),\mathcal C)
  &\ge
  d(f^{H(x)-j}(x),\mathcal C)-d(f^{H(x)-j}_{\underline
    t}(x),f^{H(x)-j}(x))\ge\sigma^{bj}-\delta_1\sigma^{j/2}
  \\
  &=\sigma^{bj}(1-\delta_1\sigma^{(b-1/2)j})
  \ge
  (1-\delta_1)\sigma^{bj}\nonumber
\end{align}
whenever $d(f^{H(x)-j}_{\underline t}(x),\mathcal C)<\delta$.
Hence $H(x)$ is a $(\hat\sigma,\delta)$-hyperbolic
time, for some $\sigma<\hat\sigma<1$ for all $x$ such
that $H(x)$ is finite.

Up until now, the proof of was done with a fixed
maximum size $1/2$ for the perturbation. If we consider
$\Omega\subset[-\epsilon_0,\epsilon_0]^\nat$ with
$0<\epsilon_0<1/2$, then the size of $t\cdot\zeta(x)$ is
reduced proportionally in all the previous estimates,
so that we obtain the last part of the statement.
\end{proof}

This concludes the proof of Theorem~\ref{nuero}.

\subsection{Asymptotic rates of expansion and recurrence on
  random orbits}
\label{sec:asympt-rates-expans}

As a consequence of preservation of hyperbolic times,
we have the following uniform estimates for the
asymptotic rate of non-uniform expansion and slow
recurrence for random orbits, i.e., the estimates we
obtain do not depend on the perturbation as long as the
perturbation is small enough.

\begin{proposition} \label{pr:RSlow} If $f$ is a
  non-uniformly expanding map with slow recurrence to
  the critical set having a first hyperbolic time map
  $L^p$-integrable for some $p>3$ then there is
  $\epsilon_{0}\in(0,1/2)$ such that, for all
  $0<r<\epsilon_{0}$, for $\lambda$-almost every point
  $x$ and for all $\underline{t}\in[-r,r]^\nat$, we
  have the bound $\liminf_{n\to+\infty} \frac{1}{n}
  \sum_{j=0}^{n-1} -\log
  d_{\delta}(f_{\underline{t}}^{j}(x),\mathcal{C})<2\epsilon$
  and also $\liminf_{n\to+\infty}\frac1n
  \sum_{j=0}^{n-1} \log \|Df(f^j_{\underline
    t}(x))^{-1}\| \le\frac12\log\sigma$.
\end{proposition}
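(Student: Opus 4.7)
The plan is to exploit Proposition~\ref{pr:rand-hyp-inside}: the random orbit of $(\underline t,x)$ decomposes into successive \emph{tracking windows} $[S_k,S_{k+1})$ where $S_0=0$, $S_{k+1}=S_k+H(x_k)$, and $x_k=f^{S_k}_{\underline t}(x)$; on each window the random orbit shadows the unperturbed orbit of $x_k$ within error $\delta_1\sigma^{(H(x_k)-(j-S_k))/2}$. Each endpoint $S_{k+1}$ is a $(\hat\sigma,\delta)$-hyperbolic time for $(\underline t,x)$ with $\hat\sigma=\sigma^{1/2}$, and by Remark~\ref{rmk:hbdd} every hyperbolic time of $x_k$ inside $[0,H(x_k)]$ lifts to a random hyperbolic time for $(\underline t,x)$. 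Applying Lemma~\ref{le:infhyptimes} to each $x_k$ (a full-measure statement used countably often) produces an infinite sequence $\tilde n_i\to\infty$ of random hyperbolic times of positive asymptotic density.

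For the derivative bound, at each $\tilde n_i$, the contraction estimate~\eqref{eq:pert-prod} combined with Remark~\ref{rmk:Dft_Df} gives
\begin{align*}
  \prod_{j=0}^{\tilde n_i-1}\|Df(f^j_{\underline t}(x))^{-1}\|\le \hat\sigma^{\tilde n_i},
\end{align*}
so $\tilde n_i^{-1}\sum_{j=0}^{\tilde n_i-1}\log\|Df(f^j_{\underline t}(x))^{-1}\|\le \tfrac12\log\sigma$, and passing to $\liminf$ along this subsequence yields the second bound of the proposition uniformly in $\underline t\in[-r,r]^{\mathbb N}$.

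For the recurrence bound, I would use \eqref{eq:dist-crit-ratio} within each window to obtain the pointwise comparison $d(f^j_{\underline t}(x),\mathcal C)\ge(1-\delta_1)\,d(f^{j-S_k}(x_k),\mathcal C)$, then sum over $j\in[S_k,S_{k+1})$ and apply the hyperbolic-time sum property~\eqref{eq:hip-times-prop} to each $x_k$ at the hyperbolic time $H(x_k)$, obtaining $\sum_{\ell=0}^{H(x_k)-1}-\log d_\delta(f^\ell(x_k),\mathcal C)\le -bH(x_k)\log\sigma$. Summing over $k=0,\ldots,K-1$ and dividing by $n=S_K=\sum_k H(x_k)$ produces a Ces\`aro bound whose dominant term is controlled by the choice of $\sigma$ together with a correction $|\log(1-\delta_1)|$ that is forced arbitrarily small by shrinking $\epsilon_0$ (and hence $\delta_1$). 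Taking $\liminf$ along $n=S_K$ delivers the required first bound.

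The main obstacle is the truncation mismatch between the bare distance $d(\cdot,\mathcal C)$ (which appears naturally in~\eqref{eq:dist-crit-ratio}) and the truncated $d_\delta$ of the statement: when $f^{j-S_k}(x_k)$ lies slightly beyond $\delta$ from $\mathcal C$ while the random orbit has drifted slightly inside the $\delta$-neighbourhood, the left-hand-side term is positive while the natural right-hand-side reference term is zero. I would resolve this by introducing an auxiliary truncation radius $\delta'>\delta$ for the unperturbed orbit, chosen with $\delta'-\delta$ strictly larger than the worst-case tracking error $\epsilon_0\delta_1$, so that the far-from-$\mathcal C$ region of $f^{j-S_k}(x_k)$ at level $\delta'$ maps into the far-from-$\mathcal C$ region of the random orbit at level $\delta$; the residual constant $|\log(1-\delta_1)|$ is then absorbed into the $2\epsilon$ budget by shrinking $\epsilon_0$ further.
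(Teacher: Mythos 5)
Your proposal follows essentially the same route as the paper's proof: the derivative bound comes from concatenating random hyperbolic times at the window endpoints $S_k$ and applying~\eqref{eq:pert-prod}, and the recurrence bound comes from comparing perturbed and unperturbed distances to $\SC$ via~\eqref{eq:dist-crit-ratio} on each window of length $H(x_k)$ and then summing the resulting geometric correction. You are in fact more careful than the paper on the $d_\delta$ truncation mismatch (the paper's displayed inequality, introduced ``from the definition of $d_\delta$,'' silently ignores the case $d(f^j(x),\SC)\ge\delta$ while $d(f^j_{\underline t}(x),\SC)<\delta$), and your dilation to an auxiliary radius $\delta'>\delta$, with the residual $|\log(1-\delta_1)|$ absorbed by shrinking $\epsilon_0$, is a standard and correct remedy for that gap.
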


\begin{proof}
  The last limit inferior is clear: since we have
  infinitely many hyperbolic times $H(x)$ for
  $\lambda$-almost every $x$, we also have infinitely
  many hyperbolic times $H(x)$ for $\lambda$-almost
  every $x$ and every $\underline
  t\in[r,r]^\nat$. Hence
  from~(\ref{eq:pert-prod}) we obtain infinitely many
  hyperbolic times $n_1=H(x),
  n_2=n_1+H(f^{n_1}_{\underline t}(x)),
  n_3=n_2+H(f^{n_2}_{\underline t}(x)),\dots$ along the
  random orbit of $x$ with the average rate
  $\frac12\log\sigma$, which implies the stated bound
  for the limit inferior.

  For the limit inferior of slow approximation, we use
  (\ref{eq:dist-crit-ratio}) to write for all $0\le j<H(x)$
\begin{align}\label{distcritica}
\frac{d(f_{\underline{t}}^{j}(x),\SC)}{d(f^{j}(x),\SC)}
&\geq 
1-\frac{d(f_{\underline{t}}^{j}(x),f^{j}(x))}{d(f^{j}(x),
  \SC)}
\ge 1-r\sigma^{(1/2-b)(H(x)-j)}.
\end{align}
From the definition of $d_\delta$ we can write, since
$0<r<1/2$ and $H(x)$ is a hyperbolic time
\begin{align*}
  \sum_{j=0}^{H(x)-1}
  -\log d_\delta(f_{\underline{t}}^{j}(x),\SC)
  &\le
  \sum_{j=0}^{H(x)-1}
  -\log (1-r\sigma^{(1/2-b)(H(x)-j)})
  +
  \sum_{j=0}^{H(x)-1}
  -\log d_\delta(f^j(x),\SC)
  \\
  &\le
  \sum_{j=0}^{H(x)-1} 2r\sigma^{(1/2-b)(H(x)-j)}
  +
  \epsilon n
  =
  \frac{2r\sigma^{1/2-b}}{1-\sigma^{1/2-b}}+\epsilon n
  \le 2\epsilon n
\end{align*}
if we take $0<r<\epsilon_0<1/2$ small enough.

The bound on the limit inferior follows again from the
existence of infinitely many hyperbolic times along the
orbit of $(\underline t,x)$ for $\lambda$-almost every
$x$ and all $\underline t\in[-r,r]^\nat$.
\end{proof}


\section{Uniqueness of absolutely continuous
  stationary measure}
\label{sec:uniquen-absolut-cont}

As a consequence of the choice of adapted perturbations from
Theorem~\ref{nuero} and the family
$(\theta_\epsilon)_{\epsilon>0}$ of probability measures in
(\ref{eq:theta_ep}), we obtain the following.

\begin{theorem}\label{thm:uniquestationary}
  For each sufficiently small $\epsilon>0$ in the choice of
  $\zeta$ in the construction of an adapted random perturbation
  from (\ref{pertaditiva}) as in Theorem~\ref{nuero}, there
  exists a unique absolutely continuous and ergodic
  stationary measure for the random dynamical system
  $(f_{\underline{t}},\theta_\epsilon^{\mathbb{N}})$.
\end{theorem}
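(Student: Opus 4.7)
The plan is to combine the preservation of hyperbolic times under the adapted perturbation (Proposition~\ref{pr:rand-hyp-inside}) with the uniform expansion and slow recurrence estimates for random orbits (Proposition~\ref{pr:RSlow}) to run a random analogue of the construction underlying Theorem~\ref{thm:abv}, and then to use the dense orbit hypothesis to upgrade the finiteness of ergodic components to uniqueness.

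First I would establish existence. By Proposition~\ref{pr:RSlow}, for all $\un t\in[-\epsilon,\epsilon]^\nat$ and $\lambda$-a.e. $x$ the random orbit of $(\un t,x)$ is non-uniformly expanding with slow recurrence to $\SC$, with constants independent of $\epsilon<\epsilon_0$. Moreover, by Remark~\ref{rmk:hbdd} the random orbit of $(\un t,x)$ inherits the hyperbolic time $H(x)$ of the unperturbed point $x$, and by the monotonicity property $H(f^j(x))\ge H(x)-j$ the subsequent hyperbolic times can be concatenated to yield infinitely many random hyperbolic times with positive density. At each such random hyperbolic time $n$, Lemma~\ref{lema 5.2} applied to $f_{\un t}^n$ gives an inverse branch on the ball $B(f^n_{\un t}(x),\delta_1)$ with bounded distortion. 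Following the standard construction in \cite[\S5]{ABV00} adapted to the skew-product $F$, I would average the pushforwards $\frac1N\sum_{n=0}^{N-1}F_*^n(\theta_\epsilon^\nat\times\lambda)$ and extract a weak$^*$ limit $\tilde\mu$. The bounded distortion at hyperbolic times, together with the uniform integrability of the first hyperbolic time map (which does not deteriorate with $\epsilon$ by Remark~\ref{rmk:hbdd}), forces the second marginal of $\tilde\mu$ to be absolutely continuous with respect to $\lambda$, producing an absolutely continuous stationary measure $\mu^\epsilon$.

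Next I would prove finiteness of the ergodic decomposition. Any ergodic absolutely continuous stationary measure $\mu$ has the property that $\lambda$-a.e. point of its basin belongs to $\supp\mu$; furthermore, at any random hyperbolic time of a generic point of the basin, the inverse branch provides a ball of radius $\delta_1/2$ (say) essentially contained in $\supp\mu$ up to a $\lambda$-null set. Since $M$ is compact, only finitely many such ergodic components $\mu^\epsilon_1,\dots,\mu^\epsilon_q$ can exist, and their basins $B_1,\dots,B_q$ cover $\lambda$-a.e. point of $M$. This step is the direct analogue of the conclusion of Theorem~\ref{thm:abv} in the random setting.

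The crux of the proof, and the main obstacle, is uniqueness ($q=1$). Here I would use the dense orbit hypothesis on $f=f_0$. Fix $x_0\in M$ whose forward orbit is dense and consider the open sets $\interior(B_i)$ in $M$ (which are non-empty up to $\lambda$-null sets, by the bounded distortion argument above). There is some $n_0$ with $f^{n_0}(x_0)\in\interior(B_1)$; by density again, for any $B_j$ there is $n_1>n_0$ with $f^{n_1}(x_0)\in\interior(B_j)$. Since $\zeta$ is locally constant with $\zeta>0$ on a set of full $\lambda$-measure, the noise distribution $\theta_\epsilon$ produces an absolutely continuous push-forward $(f_{t})_*\lambda$ on a neighbourhood of $f(x)$, so the $(n_1-n_0)$-step transition kernel of the random system from $f^{n_0}(x_0)$ assigns positive mass to every open subset of $\interior(B_j)$; this contradicts the $F$-invariance of $B_1$ up to $\lambda$-null sets unless $j=1$. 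Hence $q=1$, and the unique absolutely continuous stationary measure is automatically ergodic since any non-trivial ergodic decomposition would produce distinct ergodic absolutely continuous stationary measures, contradicting uniqueness. The delicate point is the accessibility step: one must argue that the random noise, applied along an orbit that stays close to the $f$-orbit of $x_0$ (by Proposition~\ref{pr:rand-hyp-inside}), diffuses enough to reach any prescribed open set with positive probability under $\theta_\epsilon^\nat$.
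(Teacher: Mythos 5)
Your overall outline (existence, absolute continuity, finiteness of ergodic components, uniqueness via the dense orbit) matches the paper in spirit, but the technical route diverges at two critical points, one of which contains a genuine gap.

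For existence and absolute continuity, the paper's argument is substantially simpler and stronger than yours. Rather than running a random version of the ABV00 bounded-distortion construction, the paper observes directly (Proposition~\ref{pr:abscontpullback}) that the \emph{one-step transition kernel} $(f_x)_*\theta_\epsilon^{\mathbb N}$ is absolutely continuous with respect to $\lambda$ for every $x$: since $f_t(x)=f(x)+t\zeta(x)$ with $\zeta(x)\neq 0$, the push-forward of the uniform measure $\theta_\epsilon$ on the $t$-variable is a multiple of Lebesgue on a ball about $f(x)$. From stationarity this immediately forces \emph{every} stationary measure to be absolutely continuous, not merely the one you construct. Existence itself then comes from a routine Krylov--Bogolyubov average $\frac1n\sum_{j\le n}(f_x^j)_*\theta_\epsilon^{\mathbb N}$ (Lemma~\ref{le:existencestationary}); no hyperbolic-time machinery is needed for this part. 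Your construction via $\frac1N\sum F_*^n(\theta_\epsilon^\nat\times\lambda)$ is not wrong, but it does not establish that \emph{all} stationary measures are a.c., which is precisely the input the paper uses to collapse the ergodic decomposition.

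The genuine gap lies in your uniqueness step. You argue from the single point $y=f^{n_0}(x_0)$ and claim the $(n_1-n_0)$-step kernel from $y$ charges every open subset of $\interior(B_j)$; this is false for the adapted perturbation, since $\zeta(x)=\xi\omega^{-\eta H(x)^2}$ can be exponentially small, so the kernel from a single point is supported in an arbitrarily thin neighbourhood of $f^{n_1}(x_0)$. More importantly, random positive invariance of $B_1$ is an \emph{a.e.}\ property, so reaching $B_j$ from a single ($\lambda$-null) starting point contradicts nothing; you would need a positive-$\lambda$-measure set of initial points whose random orbits reach $B_j$, and the proposal does not argue this. The paper avoids this diffusion/accessibility issue altogether. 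It shows instead that (i) $\supp\mu^\epsilon$ is positively $f$-invariant, because $0\in\supp\theta_\epsilon$ means the deterministic map $f$ is embedded in the random system; (ii) $\supp\mu^\epsilon$ has nonempty interior (again directly from the additive noise structure); hence (iii) the dense $f$-orbit forces $\supp\mu^\epsilon=M$ (Lemma~\ref{le:fullinv}); and (iv) ergodicity follows because Proposition~\ref{pr:nucleus} (from \cite{AlV13}) guarantees that any random positively invariant set of positive measure essentially contains a ball of radius $\delta_1/4$, and two disjoint such balls would each be visited by the dense $f$-orbit, a contradiction. The use of Proposition~\ref{pr:nucleus} is the ingredient your proposal is missing; without it, the accessibility argument as you have sketched it does not go through.
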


Consider the measure
$(f_{x})_{*}\theta_{\epsilon}^{\mathbb{N}}$ which is the
{\it push-foward} of the measure
$\theta_{\epsilon}^{\mathbb{N}}$ by $f_{t}:M\rightarrow M$
for a fixed $t\in\mbox{supp} \theta_{\epsilon}$, where we
write $f_{x}(\underline{t})$ for $f_{\underline{t}}(x)$.  We
first mention a simple way to ensure the existence of a
stationary measure for
$(f_{\underline{t}},\theta_\epsilon^{\mathbb{N}})$.

\begin{lemma}\label{le:existencestationary}
  For each sufficiently small $\epsilon>0$ in the choice of
  $\zeta$ in the construction of an adapted random
  perturbation from (\ref{pertaditiva}) as in
  Theorem~\ref{nuero} and for $x\in M$ fixed, each weak$^*$
  accumulation point of the sequence
  $\mu_{n}^{\epsilon}(x)
  =\frac{1}{n}\sum_{j=1}^{n}(f_{x}^{j})_{*}\theta_{\epsilon}^{\mathbb{N}}$
  is a stationary measure.
\end{lemma}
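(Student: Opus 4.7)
The plan is to adapt the classical Krylov--Bogolyubov construction to the random setting. I define the transfer operator $L$ on bounded Borel functions $\phi: M \to \R$ by $L\phi(y) = \int \phi(f_t(y))\, d\theta_\epsilon(t)$, and note by Fubini and the product/independence structure of $\theta_\epsilon^{\mathbb N}$ that for every $j \ge 1$
\begin{align*}
  L^j\phi(x) = \int \phi(f^j_{\underline t}(x))\, d\theta_\epsilon^{\mathbb N}(\underline t)
             = \int \phi\, d(f_x^j)_*\theta_\epsilon^{\mathbb N},
\end{align*}
so that $\int \phi\, d\mu_n^\epsilon(x) = \frac{1}{n}\sum_{j=1}^n L^j\phi(x)$. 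This identifies our Cesaro average with the Birkhoff average of $\phi$ under the deterministic action of the Markov operator $L$ on bounded functions, and the existence of weak$^*$ accumulation points then follows from compactness of the space of probability measures on the compact manifold $M$.

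A telescoping computation then yields
\begin{align*}
  \int L\phi\, d\mu_n^\epsilon(x) - \int \phi\, d\mu_n^\epsilon(x)
  &= \frac{1}{n}\sum_{j=1}^n\bigl(L^{j+1}\phi(x) - L^j\phi(x)\bigr)
  = \frac{L^{n+1}\phi(x) - L\phi(x)}{n},
\end{align*}
which tends to $0$ as $n\to\infty$ because $L$ is a positive Markov operator and hence a contraction on $L^\infty$, so $\|L^j\phi\|_\infty \le \|\phi\|_\infty$ uniformly in $j$. Choosing a subsequence $n_k$ along which $\mu_{n_k}^\epsilon(x)\to \mu^\epsilon$ weak$^*$, the second integral converges to $\int \phi\, d\mu^\epsilon$. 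If the first integral also converges to $\int L\phi\, d\mu^\epsilon$, then $\mu^\epsilon$ satisfies the defining identity of Definition~\ref{eq:defstationary}, namely $\int \phi\, d\mu^\epsilon = \iint \phi(f_t(y))\,d\mu^\epsilon(y)\,d\theta_\epsilon(t)$, and we are done.

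The main obstacle is therefore to justify the convergence $\int L\phi\, d\mu_{n_k}^\epsilon(x) \to \int L\phi\, d\mu^\epsilon$ for continuous $\phi$. The change of variables $s = t\zeta(x)$ gives the explicit formula
\begin{align*}
  L\phi(x) = \frac{1}{\lambda\bigl(B(f(x),\epsilon\zeta(x))\bigr)}\int_{B(f(x),\epsilon\zeta(x))}\phi(y)\, d\lambda(y),
\end{align*}
from which it is clear that $L\phi$ is continuous at every point where both $f$ and $\zeta$ are continuous. Since $\zeta$ is locally constant at $\lambda$-a.e.\ point and $f \in C^2$ away from $\SC$, the discontinuity set $D$ of $L\phi$ satisfies $\lambda(D)=0$. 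To upgrade $\lambda$-a.e.\ continuity into a statement usable under weak$^*$ convergence, I would exploit that each $\mu_n^\epsilon(x)$ for $n\ge 1$ is absolutely continuous with respect to $\lambda$ (each single random step convolves with the uniform measure on a ball of positive radius around $f_{\underline t}^{j-1}(x)$, so $(f_x^j)_*\theta_\epsilon^{\mathbb N}\ll\lambda$ for all $j\ge1$), combine this with outer regularity of $\mu^\epsilon$ on open neighborhoods of $D$ and the Portmanteau theorem to conclude $\mu^\epsilon(D)=0$, and then apply the Portmanteau theorem once more to the bounded function $L\phi$ which is continuous $\mu^\epsilon$-a.e. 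This last density/regularity step is the delicate part of the argument.
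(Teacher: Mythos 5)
Your argument mirrors the paper's own proof quite closely: both are Krylov--Bogolyubov constructions that combine the Cesaro/telescoping identity $\int L\phi\,d\mu_n^\epsilon(x)-\int\phi\,d\mu_n^\epsilon(x)=O(1/n)$ with a weak$^*$ passage to the limit. You correctly isolate the delicate point that the paper invokes only implicitly (under the heading ``by the Dominated Convergence Theorem''): one must commute the weak$^*$ limit with $L$, and this is not automatic because $L\phi$ is discontinuous on the discontinuity set $D$ of $\zeta$, which is Lebesgue-null but nonempty. Your telescoping reduction, the explicit convolution formula for $L\phi$, and the observation $\lambda(D)=0$ are all sound, and the reduction to ``show $\mu^\epsilon(D)=0$ and apply Portmanteau'' is the right reformulation of the difficulty.

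The gap is in exactly the step you flag as delicate. From $\lambda(D)=0$ and absolute continuity of each $\mu_n^\epsilon(x)$, $n\ge 1$, you cannot conclude $\mu^\epsilon(D)=0$: a weak$^*$ limit of absolutely continuous probability measures can charge a Lebesgue-null set (e.g.\ $n\,\lambda\!\mid_{[0,1/n]}\to\delta_0$). Outer regularity plus the Portmanteau theorem run the wrong way here: for an open $U\supset D$ one only gets $\mu^\epsilon(U)\le\liminf_k\mu_{n_k}^\epsilon(U)$, which is useless unless one has a bound on $\mu_{n_k}^\epsilon(U)$ that is uniform in $k$ and small in $U$; that would require uniform control of the densities of $\mu_{n_k}^\epsilon(x)$ near $D$, which is not available at this stage (the paper obtains density bounds only much later, through bounded distortion at hyperbolic times). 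Establishing $\mu^\epsilon\ll\lambda$ first is circular, because the paper's proof of absolute continuity of stationary measures already uses stationarity. So the sentence beginning ``combine this with outer regularity$\dots$ to conclude $\mu^\epsilon(D)=0$'' does not follow from what precedes it, and the argument does not close without an additional idea (for instance, a uniform density estimate near $D$, or a modification of $\zeta$ making $L$ genuinely Feller).
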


\begin{proof}
  Let $\mu^{\epsilon}$ be a weak$^*$ accumulation point of
  the sequence $(\mu_{n}^{\epsilon}(x))_{n}$. For each
  continuous $\phi: M\rightarrow\mathbb{R}$, we have by the
  Dominated Convergence Theorem
\begin{eqnarray}\label{eq:weak-conv}
\int\int\phi(f_{t}(y))d\mu^{\epsilon}(y)\theta_{\epsilon}(t)
&=&
\lim_{k\rightarrow
  +\infty}\int\int\phi(f_{t}(y))d\left(\frac{1}{n_{k}}\sum_{j=1}^{n_{k}}(f_{y}^{j})_{*}\theta_{\epsilon}^{\mathbb{N}}\right)d\theta_{\epsilon}(t)\nonumber
\\
&=&
\lim_{k\rightarrow+\infty}\frac{1}{n_{k}}\sum_{j=1}^{n_{k}}\int\int\phi(f_{t}(f_{\underline{t}}^{j}(x)))d\theta_{\epsilon}^{\mathbb{N}}(\underline{t})d\theta_{\epsilon}(t).
\end{eqnarray}
By definition of the perturbed iteration and of the infinite
product $\theta_\epsilon^{\mathbb{N}}$, and because
$\mu_{n_{k}}^{\epsilon}(x)\xrightarrow[n_{k}\to
+\infty]{}\mu^{\epsilon}$ in the weak$^{*}$ topology, the
limit in (\ref{eq:weak-conv}) equals
\begin{align*}
  \lim_{k\rightarrow
+\infty}\frac{1}{n_{k}}\sum_{j=1}^{n_{k}}
\int\phi(f_{\underline{t}}^{j+1}(x))\,
d\theta_{\epsilon}^{\mathbb{N}}(\underline{t})
=\int\phi \,d\mu^{\epsilon}.
\end{align*}
Hence
$\int\int
\phi(f_{t}(y))\,d\mu^{\epsilon}(y)d\theta_{\epsilon}(t)
=\int\phi \, d\mu^{\epsilon}$ and $\mu^{\epsilon}$ is a
stationary measure.
\end{proof}


\subsection{Absolutely continuity and support with nonempty
  interior}
\label{sec:absolut-contin-suppo}

We now show that each stationary measure is absolutely
continuous with respect to Lebesgue measure $\lambda$ (a
volume form) in $M$.

\begin{proposition}\label{pr:abscontpullback} We have
  $(f_{x})_{*}\theta_{\epsilon}^{\mathbb{N}}<<\lambda$ for
  all $x\in M$.
\end{proposition}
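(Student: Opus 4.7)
The proof should be essentially immediate from the very simple structure of a one-step random iteration. The key observation is that the map $f_x:\Omega\to M$ given by $f_x(\underline{t})=f_{\underline{t}}^1(x)=f_{t_1}(x)=f(x)+t_1\,\zeta(x)$ depends only on the first coordinate $t_1$ of the noise sequence. Hence, writing $\pi_1:\Omega\to X$ for the projection onto the first coordinate, we get $f_x=g\circ\pi_1$ with $g:B(0,\epsilon)\to M$ the affine map $g(t)=f(x)+t\,\zeta(x)$, and by Fubini
\begin{align*}
(f_x)_*\theta_\epsilon^{\mathbb{N}} = g_*\bigl((\pi_1)_*\theta_\epsilon^{\mathbb{N}}\bigr) = g_*\theta_\epsilon.
\end{align*}

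Next I would invoke the construction of $\zeta$ from Proposition~\ref{pr:rand-hyp-inside}, namely $\zeta(x)=\xi\,\omega^{-\eta H(x)^2}$, together with Remark~\ref{rmk:thadapt} (which sets $H(x)=1$ on the exceptional null set where the supremum in Definition~\ref{th adapt} is not finite). This ensures $\zeta(x)>0$ at \emph{every} $x\in M$, so $g$ has constant nonzero Jacobian determinant $\zeta(x)^n$ and is therefore a diffeomorphism from $B(0,\epsilon)$ onto the ball $B(f(x),\epsilon\,\zeta(x))\subset M$ (read on the universal cover of $\T^n$ if necessary, which is legitimate because $\epsilon$ is small).

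Since $\theta_\epsilon$ is by definition the normalized restriction of $\lambda$ to $B(0,\epsilon)$, so in particular $\theta_\epsilon\ll\lambda$, the change-of-variables formula gives that $g_*\theta_\epsilon$ has the explicit Radon--Nikodym derivative
\begin{align*}
  \frac{d(f_x)_*\theta_\epsilon^{\mathbb{N}}}{d\lambda}(y)=\frac{1}{\lambda(B(0,\epsilon))\,\zeta(x)^n}\,\mathbf{1}_{B(f(x),\,\epsilon\,\zeta(x))}(y),
\end{align*}
which is bounded and supported in the ball of radius $\epsilon\,\zeta(x)$ around $f(x)$. In particular $(f_x)_*\theta_\epsilon^{\mathbb{N}}\ll\lambda$, as claimed.

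There is essentially no obstacle here; the only subtlety worth flagging is the well-definedness and positivity of $\zeta$ at every point (including on the measure-zero set where $H$ was not a priori defined), which is handled by the convention in Remark~\ref{rmk:thadapt}. The argument is robust enough that it works for any perturbation family of the form~(\ref{pertaditiva}) with $\zeta>0$ and $\theta_\epsilon\ll\lambda$, which matches the generality claimed in the Comments subsection.
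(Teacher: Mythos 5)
Your proof is correct and follows essentially the same route as the paper: the one-step iterate $f_x$ depends only on $t_1$, is affine in $t_1$ with slope $\zeta(x)>0$ (guaranteed by the convention $H(x)=1$ on the exceptional set), and therefore pushes $\theta_\epsilon\ll\lambda$ forward to an absolutely continuous measure. Your version is in fact the more careful one — the explicit density with the $\zeta(x)^n$ Jacobian and the image ball $B(f(x),\epsilon\,\zeta(x))$ is what a clean change of variables produces, whereas the paper's last displayed equality has a harmless scaling slip (it writes $\zeta(x)^{-1}$ and $B_\epsilon(0)$ where $\zeta(x)^{-n}$ and $B_{\epsilon\zeta(x)}(0)$ should appear), which does not affect the conclusion that $\lambda(A)=0$ forces $(f_x)_*\theta_\epsilon^{\mathbb N}(A)=0$.
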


We recall that from~\ref{rmk:thadapt} we have that $H$ is
never zero on $M$, and so $\zeta(x)\neq0$ for all $x\in M$.

\begin{proof}In fact, consider $A\subset M$ some ball in $M$
  which (we assume is a parallelizable manifold, e.g. an
  interval, the circle or a $n$-torus). We have
$$
\begin{array}{cll}
(f_{x})_{*}\theta_{\epsilon}^{\mathbb{N}}(A)
&=&
\theta_{\epsilon}^{\mathbb{N}}\{ \underline{t}: 
f_{\underline{t}}(x)\in A\}
\\
&=&
\theta_{\epsilon}^{\mathbb{N}}\{\underline{t}; f(x)+t_1
\cdot\zeta(x)\in A\}
\\
&=&
\theta_{\epsilon}\Big\{ t_1; t_1\in
\frac{A-f(x)}{\zeta(x)}\Big\}
\\
&=&
\frac1{\lambda(B_{\epsilon}(0))}\cdot
\lambda\left(\frac{A-f(x)}{\zeta(x)}\cap B_{\epsilon}(0)\right)
\\
&=&
\frac{1}{\zeta(x)}
\cdot
\frac1{\lambda(B_{\epsilon}(0))}
\cdot
\lambda\left((A-f(x))\cap B_{\epsilon}(0)\right)
  \end{array}
$$
which shows that, if $\lambda(A)=0$, then
$(f_x)_*(\theta_\epsilon^{\mathbb N})(A)=0$.
\end{proof}

We observe that $B_\epsilon(0)\ni t\mapsto f_{t}(x)\in M$ is
continuous for each fixed $x\in M$. We also note that, since
the space $C^{0}(M,\mathbb{R})$ of continuous functions is
dense in the space $L^1(\mu^\epsilon)$ of Borel integrable
functions with respect to $\mu^\epsilon$, with the
$L^1$-norm, then the stationary condition in
Definition~\ref{eq:defstationary} holds also for any
$\mu$-integrable $\phi: M\rightarrow\mathbb{R}$.

\begin{lemma} Every stationary probability measure
  $\mu^{\epsilon}$ is absolutely continuous with respect to
  $\lambda$.
\end{lemma}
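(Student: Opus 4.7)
The plan is to reduce absolute continuity of $\mu^{\epsilon}$ to the pullback statement of Proposition~\ref{pr:abscontpullback} by applying the stationary identity to indicator functions of Lebesgue null sets. Fix a Borel set $A\subset M$ with $\lambda(A)=0$; the goal is to show that $\mu^{\epsilon}(A)=0$.

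First I would extend the stationary identity from continuous test functions to the indicator $\phi=\mathbf{1}_A$. As the excerpt already notes, $C^0(M,\R)$ is dense in $L^1(\mu^{\epsilon})$, and the right-hand side operator $P_{\epsilon}\phi(x)=\int\phi(f_t(x))\,d\theta_{\epsilon}(t)$ maps bounded Borel functions into bounded Borel functions with $\|P_{\epsilon}\phi\|_\infty\le\|\phi\|_\infty$. Hence the identity $\int\phi\,d\mu^{\epsilon}=\int P_{\epsilon}\phi\,d\mu^{\epsilon}$ extends from $C^0(M,\R)$ to all bounded Borel functions by a standard monotone class argument (or by approximating $\mathbf{1}_A$ in $L^1(\mu^{\epsilon})$ by uniformly bounded continuous functions and using dominated convergence on both sides).

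Next, for $\phi=\mathbf{1}_A$, I would apply Fubini's theorem, which is justified because the integrand is bounded on the product space $M\times\supp(\theta_{\epsilon})$ with finite product measure. This yields
\begin{align*}
\mu^{\epsilon}(A)
= \int\!\!\int \mathbf{1}_A(f_t(x))\,d\theta_{\epsilon}(t)\,d\mu^{\epsilon}(x)
= \int (f_x)_{*}\theta_{\epsilon}^{\mathbb{N}}(A)\,d\mu^{\epsilon}(x),
\end{align*}
where in the last equality I use the fact that $f_{\underline t}(x)=f(x)+t_1\zeta(x)$ depends only on the first coordinate $t_1$, so the inner integral is precisely $(f_x)_{*}\theta_{\epsilon}^{\mathbb{N}}(A)$.

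Finally, by Proposition~\ref{pr:abscontpullback} the measure $(f_x)_{*}\theta_{\epsilon}^{\mathbb{N}}$ is absolutely continuous with respect to $\lambda$ for every $x\in M$ (this is where $\zeta(x)>0$ everywhere, guaranteed by Remark~\ref{rmk:thadapt}, is crucial), so $(f_x)_{*}\theta_{\epsilon}^{\mathbb{N}}(A)=0$ for every $x$. Integrating against $\mu^{\epsilon}$ gives $\mu^{\epsilon}(A)=0$, which proves $\mu^{\epsilon}\ll\lambda$. The main technical point is the extension of the stationary identity to indicators, but this is routine measure theory; the genuine content of the lemma was already absorbed into Proposition~\ref{pr:abscontpullback}.
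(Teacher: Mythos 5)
Your proof is correct and follows essentially the same route as the paper: extend the stationarity identity from continuous test functions to indicators (the paper does this via density of $C^0$ in $L^1(\mu^\epsilon)$, you via a monotone class argument — either works), interchange the order of integration, and conclude from Proposition~\ref{pr:abscontpullback} applied to each $x$. Your write-up is just slightly more explicit about the measure-theoretic justifications.
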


\begin{proof}
  From the above observation that the relation in Definition
  \ref{eq:defstationary} is true for all integrable
  functions, we have that for any Borel measurable subset
  $B\subset M$
\begin{align*}
  \mu^{\epsilon}(B) 
  = 
  \int\chi_{B}\,d\mu^{\epsilon}
  =
  \int\int \chi_{B}\circ f_{t}(y)\,d\mu^{\epsilon}(y)d\theta_{\epsilon}(t)
  =
  \int(f_{y})_{*}\theta^{\mathbb{N}}_{\epsilon}(B)\,d\mu^{\epsilon}(y)
\end{align*}
and if $\lambda(B)=0$, then we obtain $\mu^\epsilon(B)=0$
from Proposition~\ref{pr:abscontpullback}.
\end{proof}

From this we are able to show that the support of any
stationary measure has non-empty interior. Let
$\mu^{\epsilon}$ be a stationary measure and let us write
$S=\mathrm{supp}(\mu^{\epsilon})$.  Using again that the
relation in Definition~\ref{eq:defstationary} holds for
$\mu^\epsilon$-integrable functions
\begin{align*}
1
=
\int \chi_{S}(y)\,d\mu^{\epsilon}(y)
&=
\int\int \chi_{S}(f_{t}(y))\,
d\mu^{\epsilon}(y) d\theta_{\epsilon}(t)
\\
&=
\int\int\chi_{S}(f_{t}(y))\, 
d\theta_{\epsilon}(t)d\mu^{\epsilon}(y)
\end{align*}
we conclude (since $0\le\chi_{S}\le1$) that
$\int\chi_{S}(f_{t}(y))\, d\theta_{\epsilon}(t)=1$ for
$\mu^\epsilon$-a.e. $y$. Therefore we get
$\chi_{S}(f_{t}(y))=1$, that is, $f_{t}(y)\in S$ for
$\theta_\epsilon$-a.e. $t$ and $\mu^\epsilon$-a.e. $y$.

In particular, $f_{t}(y)\in S$ for $t$ is a dense subset $D$
of $B_\epsilon(0)=\mathrm{supp}(\theta_\epsilon)$ by
definition of $\theta_\epsilon$.  In addition, since
$B_\epsilon(0)\ni t\mapsto f_{t}(y)\in M$ is continuous, we
also have $f_y(D)$ is dense in $f_y(B_\epsilon(0))$ and so
the closed set $S$ contains $B_{\zeta(y)}(f(y))$, the
closure of $f_y(D)$.  We obtain that \emph{$f_t(y)\in S$ for
  all $t\in B_\epsilon(0)$ and $\mu^\epsilon$-a.e. $y$.}

From the definition of $f_t(y)$ in (\ref{pertaditiva}), we
see that the image of $f_y(B_\epsilon(0))$ is the ball
around $f(y)$ with radius $\zeta(y)\neq0$.  Hence $S$ has
non-empty interior, as claimed.


\subsection{Every stationary measure is ergodic with full
  support}
\label{sec:every-station-measur}

Now we use that the  unperturbed transformation $f$ has a
dense orbit. Let $\mu^\epsilon$ be a stationary probability
measure. We have already shown that the support $S$ of
$\mu^\epsilon$ has non-empty interior and that $S$ is
\emph{almost invariant}. 

\begin{lemma}
  \label{le:fullinv}
  Let $(f_{\underline{t}},\theta_\epsilon^{\mathbb N})$ be a
  random dynamical system such that the unperturbed map
  $f=f_0$ is a local diffeomorphism outside a
  $\lambda$-measure zero set, has a dense positive orbit
  and the parameter $0$ belongs to the support of
  $\theta_\epsilon$.
  Then $\mu^\epsilon$ has full support:
  $S=\mathrm{supp}(\mu^\epsilon)=M$.
\end{lemma}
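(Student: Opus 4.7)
The plan is to promote the almost-invariance already obtained in Section~\ref{sec:absolut-contin-suppo} into honest topological forward invariance of $S$ under $f$, and then to exploit the dense orbit hypothesis. As a preliminary observation, $\theta_\epsilon$ in~\eqref{eq:theta_ep} has $\mathrm{supp}(\theta_\epsilon)=\overline{B(0,\epsilon)}\ni 0$, and the previous subsection established that $f_t(y)\in S$ for every $t\in B_\epsilon(0)$ and $\mu^\epsilon$-a.e.\ $y$. Specialising to $t=0$, this gives $f(y)=f_0(y)\in S$ for $\mu^\epsilon$-a.e.\ $y$.

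The first real step I would carry out is the upgrade from this measure-theoretic statement to the topological inclusion $f(S)\subset S$. Fix $y\in S$ and any neighbourhood $V$ of $y$; by definition of support $\mu^\epsilon(V)>0$, so $V$ contains a point $y'$ with $f(y')\in S$. Shrinking $V$ produces a sequence $y_n\to y$ with $f(y_n)\in S$. Choosing each $y_n$ off the critical set $\SC$, which is $\lambda$-null and hence (by absolute continuity proved in Section~\ref{sec:absolut-contin-suppo}) also $\mu^\epsilon$-null, one has continuity of $f$ at every $y_n$; closedness of $S$ then forces $f(y)\in S$, so $f(S)\subset S$.

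For the second step, recall from Section~\ref{sec:absolut-contin-suppo} that $S$ has non-empty interior, so I can fix an open ball $U\subset S$. Let $z\in M$ be a point whose forward $f$-orbit is dense in $M$ (existence by hypothesis). Density gives some $n_0\ge 0$ with $f^{n_0}(z)\in U\subset S$, and iterating $f(S)\subset S$ yields $f^m(z)\in S$ for all $m\ge n_0$. The tail $\{f^m(z):m\ge n_0\}$ is still dense in $M$ (discarding finitely many terms preserves density), so $S$ contains a dense subset of $M$; being closed, $S=M$, which is full support.

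The main obstacle is Step 2: the almost-invariance only holds $\mu^\epsilon$-a.e., and one must rule out a pathological $\mu^\epsilon$-null subset of $S$ on which $f$ exits $S$ or fails to be continuous. The two ingredients that make this bridge work are exactly those flagged in the hypothesis, namely $\mu^\epsilon\ll\lambda$ (proved in Section~\ref{sec:absolut-contin-suppo}) together with $f$ being a local diffeomorphism off a $\lambda$-null set; once these are combined, the rest reduces to a clean topological dynamics argument driven by the dense orbit.
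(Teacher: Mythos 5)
Your proposal follows essentially the same route as the paper: first promote the $\mu^\epsilon$-a.e.\ forward invariance (specialized to $t=0\in\supp\theta_\epsilon$) to the topological inclusion $f(S)\subset S$, and then combine the dense positive $f$-orbit with the nonempty interior of $S$ to force $\overline{S}=M=S$. The paper phrases the first step by taking a full-$\mu^\epsilon$-measure set $S_0\subset S$ with $f(S_0)\subset S$, noting $\overline{S_0}=S$, and then using continuity to get $f(S)=f(\overline{S_0})\subset\overline{f(S_0)}\subset S$; your density-plus-closedness argument is that same step unfolded. One small wrinkle worth tidying: to pass from $f(y_n)\in S$, $y_n\to y$, $S$ closed, to $f(y)\in S$, what you need is continuity of $f$ at the \emph{limit} point $y$ (so that $f(y_n)\to f(y)$), not at the approximants $y_n$; since in this setting $f$ is a continuous endomorphism of $M$ (it is $C^2$ off $\SC$ and globally defined and continuous), this holds automatically and the detour of choosing $y_n\notin\SC$ is unnecessary.
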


\begin{proof}
  Let $S_0\subset S$ be such that $\mu^\epsilon(S\setminus
  S_0)=0$ and $f_t(S_0)\subset S$ for all $t\in
  B_\epsilon(0)$ -- this was proved in the previous
  subsection. Hence we also have $\lambda(S\setminus S_0)=0$
  and so $\overline{S_0}=S$.

  We have that $f$ is locally a diffeomorphism outside a critical
  set $\mathcal{C}$ with $\lambda$-measure zero. Then
  $\lambda(f(S\setminus S_0))=0$ and, because
  $f(S)\setminus f(S_0)\subset f(S\setminus S_0)$, we get
  $\lambda(f(S)\setminus f(S_0))=0$.

  Thus
  $f(S)=f(\overline{S_0})\subseteq\overline{f(S_0)}\subseteq\overline
  S=S$, and \emph{$S$ is a positively $f$-invariant subset}.

We also know that the interior of $S$ is non-empty. Let
$w\in M$ have a positive dense $f$-orbit. Then there exists
$n>1$ such that $f^n(w)$ interior to $S$ and so
$M=\omega_f(x)\subset \overline{S}=S\subset M$.
\end{proof}

To show ergodicity of any stationary measure, we need some
known auxiliary results already obtained for maps with
hyperbolic times for random orbits, as stated below.

The first result gives properties of random hyperbolic times
similar to those of Lemma~\ref{lema 5.2}.

\begin{proposition}[Proposition 2.6 and Corollary 2.7 in
  \cite{AA03}]\label{pr:randhyptimes}
  There exist $\delta_{1},C_1>0$ such that, if $n$ is a
  $(\sigma,\delta)$-hyperbolic time for
  $(\underline{t},x)\in \Omega\times M$, then there exists a
  neighborhood $V_{n}(\underline{t},x)$ of $x$ in $M$ such
  that:
\begin{enumerate}
\item $f_{\underline{t}}^{n}$ maps $V_{n}(\underline{t},x)$
  diffeomorphically onto the ball of radius $\delta_{1}$
  centered at $f_{\underline{t}}^{n}(x)$;
\item
  $d(f_{\underline{t}}^{n-k}(y),f_{\underline{t}}^{n-k}(z))
  \leq
  \sigma^{k/2}\cdot d(f_{\underline{t}}^{n}(y),f_{\underline{t}}^{n}(z))$
  for all $1\leq k\leq n$ and $y,z\in
  V_{k}(\underline{t},x)$;
\item $C_1^{-1}\leq\frac{|\det
    Df_{\underline{t}}^{n}(y)|}{|\det
    Df_{\underline{t}}^{n}(z)|}\leq C_{1}$ for all $y,z\in
  V_{n}(\underline{t},x)$.
\end{enumerate}
\end{proposition}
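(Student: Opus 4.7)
The plan is to mimic closely the proof of the deterministic Lemma~\ref{lema 5.2} (Lemma 5.2 of \cite{ABV00}), exploiting the crucial fact that for our additive perturbations $Df_t = Df$ $\lambda$-almost everywhere (Remark~\ref{rmk:Dft_Df}), so that all derivative estimates from the non-degeneracy conditions (S1)--(S3) for $f$ transfer directly to $f_{t_j}$ at the random orbit points $f^{j-1}_{\underline t}(x)$. The construction proceeds by backward induction: starting from the ball $B_{\delta_1}(f^n_{\underline t}(x))$, I would inductively define $V_{n-k}(\underline t, x)$ as the connected component containing $f^{n-k}_{\underline t}(x)$ of the preimage $(f_{t_{n-k+1}} \circ \cdots \circ f_{t_n})^{-1}\big(B_{\delta_1}(f^n_{\underline t}(x))\big)$, then verify at each step that the pullback is a diffeomorphism onto a ball of radius bounded by $\delta_1 \sigma^{k/2}$.

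First I would fix $\delta_1>0$ small enough (depending on $\sigma, \delta, B, \beta$) so that any ball of radius $\delta_1\sigma^{k/2}$ around $f^{n-k}_{\underline t}(x)$ is contained in the region where (S2) applies, i.e., lies within half the distance from $f^{n-k}_{\underline t}(x)$ to $\mathcal C$. Here the random-hyperbolic-time lower bound $d_\delta(f^{n-k}_{\underline t}(x),\mathcal C) \ge \sigma^{bk}$ together with $b\beta \le 1/2$ is exactly what makes $B\, d(y,f^{n-k}_{\underline t}(x))/d(f^{n-k}_{\underline t}(x),\mathcal C)^\beta$ uniformly small, so that (S2) yields the key pointwise estimate $\|Df(y)^{-1}\| \le \sigma^{-1/2} \|Df(f^{n-k}_{\underline t}(x))^{-1}\|$ for all $y$ in the relevant neighborhood --- exactly as in our estimate~(\ref{eq:df-y}).

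With this estimate in hand, the product bound from the hyperbolic time definition,
\[
\prod_{j=n-k}^{n-1}\|Df_{t_{j+1}}(f^j_{\underline t}(x))^{-1}\| \le \sigma^k,
\]
is inflated only by the harmless factor $\sigma^{-k/2}$, giving total contraction by $\sigma^{k/2}$ for inverse branches on $V_{n-k}(\underline t, x)$. This is the mechanism for item (2), and it also ensures the inductive step closes: the pullback of $B_{\delta_1}(f^n_{\underline t}(x))$ through $f^k_{\underline t}$ has diameter at most $2\delta_1 \sigma^{k/2}$, so it remains inside the ``good'' region at step $k$ and the pullback is a genuine diffeomorphism, proving item (1). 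Item (3) follows along the same lines by replacing (S2) with (S3): the bounded distortion telescopes in a convergent geometric series $\sum_{k\ge 1} \sigma^{k/2}$, giving the absolute bound $C_1$.

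The main obstacle, as in the deterministic case, is to guarantee that during the \emph{entire} backward pullback the orbit neighborhoods stay in the regime where (S2) and (S3) can be applied at every step --- this is exactly the role of the choice $b = \min\{1/2, 1/(2\beta)\}$ in the definition of hyperbolic time, coupled with the upper bound $d_\delta(f^{n-k}_{\underline t}(x),\mathcal C)^{-\beta} \le \sigma^{-bk\beta} \le \sigma^{-k/2}$. Once this is ensured, nothing in the proof uses that $f_t = f$ beyond the equality of derivatives, so the reasoning is essentially identical to \cite[Lemma 5.2]{ABV00} applied step by step along the randomly perturbed orbit, and the proof from \cite{AA03} can be followed verbatim.
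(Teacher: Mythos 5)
Your proposal is correct and follows essentially the same route as the cited references \cite{AA03} (Proposition 2.6 and Corollary 2.7) and \cite[Lemma 5.2]{ABV00}, which is appropriate since the paper does not reproduce a proof and imports the statement directly from \cite{AA03}. The one wrinkle worth flagging: your argument leans on the exact identity $Df_t = Df$ from Remark~\ref{rmk:Dft_Df}, which is a feature of the additive perturbations of this paper, whereas \cite{AA03} proves the proposition for general random families by requiring that the non-degeneracy bounds (S1)--(S3) hold uniformly for all $f_t$ in a $C^2$-neighbourhood of $f$; in the present setting the additive simplification is legitimate and streamlines the estimate $\|Df_{t_{j+1}}(y)^{-1}\|\le\sigma^{-1/2}\|Df_{t_{j+1}}(f^j_{\underline t}(x))^{-1}\|$, so no gap results.
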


The next result says that every non-trivial positively
invariant subset for random non-uniformly expanding
dynamical system must contain a ball of a definite size.

\begin{definition}[Random positively invariant set]
  We say that a subset $A\subset M$ is random positively
  invariant if, for $\mu^{\epsilon}$-almost every $x\in A$,
  we have that $f_{t}(x)\in A$ for
  $\theta_{\epsilon}$-almost every t.
\end{definition}

We note that if $A$ is random positively invariant and
$\lambda(A)>0$, then the closure of its Lebesgue density
points $A^+$ is also random positively invariant, since $A$
is dense in $A^+$.

\begin{proposition}[Proposition 2.13 in \cite{AlV13}]\label{pr:nucleus}
  For $\delta_{1}$ given by previous proposition, given any
  random positively invariant set $A\subset M$ with
  $\mu^{\epsilon}(A)>0$, there is a ball of radius
  $\delta_{1}/4$ such that $\lambda(B\setminus A^+)=0$.
\end{proposition}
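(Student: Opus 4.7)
The plan is to use random hyperbolic times for a Lebesgue density point of $A^+$ together with the bounded distortion property in Proposition~\ref{pr:randhyptimes} to push forward this density and fill a ball of definite size. Since $\mu^\epsilon \ll \lambda$ and $\mu^\epsilon(A) > 0$ we have $\lambda(A^+) > 0$, and using $\mu^\epsilon \ll \lambda$ together with a Fubini argument on the skew-product measure $\theta_\epsilon^\N \times \mu^\epsilon$, the random positive invariance of $A^+$ can be upgraded so that for $\theta_\epsilon^\N$-almost every $\un t$ and every $j \ge 1$ one has $f^j_{\un t}(A^+) \subset A^+$ up to a $\lambda$-null set. Choose a Lebesgue density point $x_0$ of $A^+$ which, for $\theta_\epsilon^\N$-almost every sample path, also has infinitely many random $(\sigma,\delta)$-hyperbolic times, and fix a sample path $\un t$ simultaneously satisfying both properties; denote the corresponding hyperbolic times of $(\un t, x_0)$ by $n_1 < n_2 < \cdots$.

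By Proposition~\ref{pr:randhyptimes}, at each hyperbolic time $n_k$ there is a neighborhood $V_k = V_{n_k}(\un t, x_0) \ni x_0$ with $\mathrm{diam}(V_k) \le \delta_1 \sigma^{n_k/2} \to 0$, mapped diffeomorphically by $f^{n_k}_{\un t}$ onto $B_{\delta_1}(y_k)$, where $y_k := f^{n_k}_{\un t}(x_0)$, with Jacobian distortion bounded by $C_1$. Since $x_0$ is a Lebesgue density point of $A^+$ we have $\lambda(V_k \setminus A^+)/\lambda(V_k) \to 0$, and the change of variables formula together with the distortion bound give
\begin{align*}
\frac{\lambda\bigl(B_{\delta_1}(y_k) \setminus f^{n_k}_{\un t}(A^+ \cap V_k)\bigr)}{\lambda\bigl(B_{\delta_1}(y_k)\bigr)}
\le
C_1^{\,2} \cdot \frac{\lambda(V_k \setminus A^+)}{\lambda(V_k)}
\xrightarrow[k\to\infty]{} 0.
\end{align*}
By the upgraded random positive invariance along the chosen sample path, $f^{n_k}_{\un t}(A^+ \cap V_k) \subset A^+$ up to a $\lambda$-null set, and therefore $\lambda(B_{\delta_1}(y_k) \setminus A^+) \to 0$.

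Finally, by compactness of $M$ the sequence $(y_k)$ has an accumulation point $y_\infty \in M$; extract a subsequence $(y_{k_j})$ with $d(y_{k_j}, y_\infty) < \delta_1/4$, so that $B := B_{\delta_1/4}(y_\infty) \subset B_{\delta_1}(y_{k_j})$ for every such $j$, whence $\lambda(B \setminus A^+) \le \lambda(B_{\delta_1}(y_{k_j}) \setminus A^+) \to 0$ and so $\lambda(B \setminus A^+) = 0$, as required. The main technical obstacle is the bookkeeping of null sets in the first step: random positive invariance of $A^+$ is an \emph{a priori} statement $\mu^\epsilon$-almost everywhere, whereas the Lebesgue density argument in the push-forward step needs the analogous conclusion $\lambda$-almost everywhere along a specific sample path $\un t$. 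Bridging this gap requires $\mu^\epsilon \ll \lambda$, the absolute continuity $(f_x)_\ast \theta_\epsilon^\N \ll \lambda$ from Proposition~\ref{pr:abscontpullback}, and a diagonal argument over $j \ge 1$ to control all iterates simultaneously along the chosen random orbit.
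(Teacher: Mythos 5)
Your overall strategy (take a Lebesgue density point of $A^+$, push it forward at random hyperbolic times using Proposition~\ref{pr:randhyptimes}, control distortion, and extract a fixed ball by compactness) is the right mechanism for a result of this type; the paper gives no proof here, only the citation to \cite{AlV13}, so there is no internal argument to compare against.

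The step you yourself flag as the ``main technical obstacle'' is, however, a genuine gap that your proposed repair does not close. You claim the random positive invariance of $A^+$ (an $\mu^\epsilon$-a.e.\ statement) can be promoted to a $\lambda$-a.e.\ statement ``using $\mu^\epsilon\ll\lambda$.'' This runs in the wrong direction: $\mu^\epsilon\ll\lambda$ upgrades $\lambda$-a.e.\ statements to $\mu^\epsilon$-a.e.\ statements, not conversely. After Fubini on $\theta_\epsilon^\N\times\mu^\epsilon$ you only obtain that, for $\theta_\epsilon^\N$-a.e.\ $\un t$ and every $j$, the set $\{x\in A^+:f^j_{\un t}(x)\notin A^+\}$ is $\mu^\epsilon$-null; the push-forward step in your proof needs it to be $\lambda$-null inside $V_k$, and neither $\mu^\epsilon\ll\lambda$ nor $(f_x)_*\theta_\epsilon^\N\ll\lambda$ (again, the wrong direction) supplies that. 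An extra idea is needed. One way: since $(f_{x})_*\theta_\epsilon$ is the \emph{normalized Lebesgue measure} on $B(f(x),\zeta(x)\epsilon)$, random positive invariance yields for $\mu^\epsilon$-a.e.\ $x_*\in A^+$ that $\lambda\big(B(f(x_*),\zeta(x_*)\epsilon)\setminus A^+\big)=0$, so $A^+$ contains a small ball $B_0$ mod $\lambda$. Combining stationarity $\mu^\epsilon=\int(f_x)_*\theta_\epsilon\,d\mu^\epsilon$ with $x_*\in\supp\mu^\epsilon$ and the local constancy of $\zeta$ near $x_*$, one checks that $d\mu^\epsilon/d\lambda$ is bounded below on a slightly smaller ball $B_0'\subset B_0$, hence $\lambda\ll\mu^\epsilon$ on $B_0'$, which converts the $\mu^\epsilon$-null bad sets from invariance into $\lambda$-null sets there. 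At that point a \emph{single} random hyperbolic time $n$ with $V_n(\un t,x_0)\subset B_0'$ already gives $\lambda(B_{\delta_1}(f^n_{\un t}(x_0))\setminus A^+)=0$ exactly, because the $\lambda$-null bad set is carried by the diffeomorphism $f^n_{\un t}|_{V_n}$ to a $\lambda$-null set; the density limit, the distortion constant, and the compactness extraction all become unnecessary.
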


The following is well-known from the theory of Markov chains.

\begin{lemma}[Lemma 8.2 in
  \cite{vdaraujo2000}] \label{le:restrict} The normalized
  restriction of a stationary measure to a random positively
  invariant set is a stationary measure.
\end{lemma}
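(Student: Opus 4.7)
The plan is to verify directly the stationary identity of Definition~\ref{eq:defstationary} for the normalized restriction $\mu_A^\epsilon := \mu^\epsilon|_A/\mu^\epsilon(A)$, using the stationarity of $\mu^\epsilon$ itself. First I would invoke the extension of the stationary identity from continuous test functions to $\mu^\epsilon$-integrable functions, which was recorded in the text just before Proposition~\ref{pr:abscontpullback} using $L^1$-density of continuous functions. This allows us to apply the stationary identity to the non-continuous function $\phi\cdot\chi_A$, where $\phi:M\to\R$ is an arbitrary continuous test function and $\chi_A$ is the indicator of $A$.

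The key preliminary step is to observe that random positive invariance of $A$ combined with stationarity forces the complement $A^c$ to be random positively invariant as well. Applying the stationary identity to $\chi_A$ itself gives
\begin{align*}
  \mu^\epsilon(A)
  &= \int\chi_A\,d\mu^\epsilon
  = \int\int \chi_A(f_t(x))\,d\theta_\epsilon(t)\,d\mu^\epsilon(x).
\end{align*}
Splitting the right-hand side according to $x\in A$ or $x\in A^c$ and using that, by the hypothesis of random positive invariance, $\chi_A(f_t(x))=1$ for $\mu^\epsilon$-a.e.\ $x\in A$ and $\theta_\epsilon$-a.e.\ $t$, the $A$-portion contributes exactly $\mu^\epsilon(A)$. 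Cancelling leaves
\begin{align*}
  \int_{A^c}\int \chi_A(f_t(x))\,d\theta_\epsilon(t)\,d\mu^\epsilon(x) = 0,
\end{align*}
so $f_t(x)\in A^c$ for $\mu^\epsilon$-a.e.\ $x\in A^c$ and $\theta_\epsilon$-a.e.\ $t$. Equivalently, $\chi_A(x)=\chi_A(f_t(x))$ holds $\mu^\epsilon\otimes\theta_\epsilon$-a.e.

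With this in hand, the stationary identity for $\mu_A^\epsilon$ follows from a short calculation. For any continuous $\phi:M\to\R$,
\begin{align*}
  \int \phi\,d\mu_A^\epsilon
  &= \frac{1}{\mu^\epsilon(A)}\int \phi\,\chi_A\,d\mu^\epsilon \\
  &= \frac{1}{\mu^\epsilon(A)}\int\int \phi(f_t(x))\,\chi_A(f_t(x))\,d\theta_\epsilon(t)\,d\mu^\epsilon(x) \\
  &= \frac{1}{\mu^\epsilon(A)}\int\int \phi(f_t(x))\,\chi_A(x)\,d\theta_\epsilon(t)\,d\mu^\epsilon(x) \\
  &= \int\int \phi(f_t(x))\,d\theta_\epsilon(t)\,d\mu_A^\epsilon(x),
\end{align*}
where the second equality applies stationarity of $\mu^\epsilon$ to the integrable function $\phi\cdot\chi_A$ and the third uses the a.e.\ identity established above. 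This is exactly the stationary condition for $\mu_A^\epsilon$.

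I expect the only delicate point to be the symmetry argument showing that $A^c$ inherits random positive invariance from $A$, since the definition is asymmetric and phrased in terms of forward images; this is precisely where stationarity (which gives a mass-balance equality rather than just an inequality) is essential. Every other step is a Fubini-plus-change-of-test-function manipulation.
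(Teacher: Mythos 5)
Your proof is correct and is the standard argument for this type of result; the paper itself merely cites Lemma~8.2 of \cite{vdaraujo2000} without reproducing the proof. You correctly identified the one nontrivial point: deducing random positive invariance of $A^c$ from that of $A$ via the mass-balance $\mu^\epsilon(A)=\int\int\chi_A(f_t(x))\,d\theta_\epsilon(t)\,d\mu^\epsilon(x)$, after which the stationary identity for the normalized restriction is a two-line computation using the a.e.\ identity $\chi_A\circ f_t=\chi_A$.
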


Now we can prove that each stationary probability measure
$\mu^\epsilon$ for our random dynamical systems is ergodic.
Arguing by contradiction, let us assume that
$\mu^{\epsilon}$ is not ergodic.

Hence, there are random (positively) invariant sets $S_{1}$
and $S_{2}= M\setminus S_{1}$ such that both have
$\mu^{\epsilon}$-positive measure. From
Proposition~\ref{pr:nucleus} both sets contain a
$\delta_1/4$-ball. Thus there exist $n_1,n_2>1$ such that
$f^{n_1}(w)\in S_1$ and $f^{n_2}(w)\in S_2$, where $w$ is a
point with dense positive $f$-orbit. Therefore,
$\overline{S_1}=M=\overline{S_2}$ which is a contradiction.



\section{Stochastic stability}

Now we combine the results of the previous sections to prove
our main Theorem~\ref{principal}. We use the same strategy
as \cite{AA03} taking advantage of the uniformity of the
first hyperbolic time with respect to the adapted random
perturbations. Indeed, from the previous constructions and
from Remark~\ref{rmk:hbdd}, we
have that there exist $0<\sigma,\delta<1$ such that
\begin{align*}
  \hat h:\Omega\times M\to M, \quad (\underline t,x)\mapsto
  \inf\{k\ge1: k \text{  is a  }
  (\sigma,\delta)-\text{hyperbolic time for  } (\underline t,x)\}
\end{align*}
satisfies $\hat h(\underline t,x)=\hat h(\un 0,x)=h(x)\le
H(x)$ for all $\underline
t\in\supp\theta_\epsilon^{\mathbb{N}}$ for
$\lambda$-a.e. $x\in M$, where $\underline 0$ is the
constant sequence equal to zero and $h(x)$ denotes the first
hyperbolic time map associated to the unperturbed dynamics
of $f$, as defined in
Section~\ref{sec:adapted-random-pertu}.

Hence, if we assume that $h\in L^p(\lambda)$ for some $p>3$,
then we have also that the series
 \begin{equation}\label{c.unif}
 \|\hat h\|_1
 =
 \int \hat h\,d(\theta_\epsilon^{\mathbb{N}}\times \lambda)
 =
 \sum_{k=0}^{\infty}k\cdot
 (\theta_\epsilon^{\mathbb{N}}\times \lambda)
 \big(\{(\underline t,x)\colon \hat h(\underline t,x)=k\}\,\big)
 \end{equation}
 has {\em uniform $L^1$-tail}, that is, the series in the
 right hand side of \eqref{c.unif} converges uniformly to
 $\|\hat h\|_1$ (as a series of functions of the variable
 $\epsilon$).

 \begin{remark}\label{rmk:L1enough}
   For this argument it is enough that we assume $h\in
   L^1(\lambda)$, as long as $\hat h(\cdot,x)=h(x)$ for
   $\lambda$-a.e.  $x\in M$ is established.
 \end{remark}

 Now we can follow the same arguments as in \cite[Section
 5]{AA03}. We sketch them here for the convenience of the
 reader.  Since there exists a unique ergodic absolutely
 continuous stationary measure $\mu^\epsilon$ for all small
 enough $\epsilon>0$, we have that
\begin{align*}
  \mu_n^\epsilon =
  \frac1n\sum_{j=0}^{n-1}\int (f^j_{\underline
    t})_{*} \lambda\,
  d\theta_\epsilon^{\mathbb{N}}(\underline t).
\end{align*}
converges in the weak$^*$ topology to $\mu^\epsilon$ as
$n\to+\infty$. We define for each $\underline t\in
\Omega^{\mathbb{N}}$ and $n\geq 1$
\begin{align*}
  H_n(\un t)
  &=
  \{ x\in B(\mu^\epsilon)\colon \mbox{ $n$ is a
    $(\sigma,\delta)$-hyperbolic time for $(\un t,x)$ }\},
  \quad\text{and}
\\
  H^*_n(\un t)
  &=
  \{ x\in B(\mu^\epsilon)\colon \mbox{ $n$ is the first
 $(\sigma,\delta)$-hyperbolic time for $(\un t,x)$ }\}.
\end{align*}
Here $H^*_n(\un t)$ is the set of points $x$ for which $\hat
h(\un t,x)=n$.  For $n,k\geq 1$ we define $R_{n,k}(\un t)$
as the set of points $x$ for which $n$ is a
$(\sigma,\delta)$-hyperbolic time and $n+k$ is the first
$(\sigma,\delta)$-hyperbolic time after $n$, that is
\begin{align*}
  R_{n,k}(\un t)= \left\{x\in H_n(\un t)\colon \:f^n_{\un
      t}(x)\in H^*_k(\sigma^n\un t)\: \right\},
\end{align*}
where $\sigma:\Omega\circlearrowleft$ is the left shift map.
Now using the measures
\begin{align*}
 \nu^\epsilon_n
 &=
 \int ( f_{\un t}^n)_*\big(\lambda\mid H_n(\un
 t)\big)\,d\theta_\epsilon^\N(\un t)
\quad\text{and}\quad
\eta_n^\epsilon
=
\sum_{k=2}^\infty\sum_{j=1}^{k-1}\int (f_{\un
t}^{n+j})_*\big(\lambda\mid R_{n,k}(\un t)\big) 
\,d\theta_\epsilon^\N(\un t),
\end{align*}
we obtain the bound 
 $ \mu_n^\epsilon\leq
\frac{1}{n}\sum_{j=0}^{n-1}(\nu_j^\epsilon+\eta_j^\epsilon).$
The bounded distortion property of hyperbolic times provides
the following.

\begin{proposition}\cite[Proposition 5.2]{AA03}\label{pr:dens1}
   There is a constant $C_2>0$ such that for every $n\geq 0$
 and $\un t\in\Omega$ we have
$   \frac{d}{d\lambda}(f_{\un t}^n)_*\big(\lambda\mid H_n(\un
   t)\big)\leq C_2.$
\end{proposition}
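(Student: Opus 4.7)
The plan is to use the bounded distortion at hyperbolic times from Proposition~\ref{pr:randhyptimes}(3), combined with a controlled-multiplicity covering of $H_n(\underline t)$ by the hyperbolic pre-balls $V_n(\underline t,x)$ supplied by the same proposition. The underlying heuristic is that on each $V_n(\underline t,x_i)$ the push-forward $(f_{\underline t}^n)_\ast(\lambda\mid V_n(\underline t,x_i))$ is comparable, up to the distortion constant $C_1$, to a multiple of Lebesgue measure on a ball of fixed radius $\delta_1$; since this image ball has Lebesgue measure bounded below by a constant depending only on $\delta_1$ and the dimension, the density stays uniformly bounded.

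First, for each fixed $\underline t\in\Omega$, I would select a covering $\{V_n(\underline t,x_i)\}_i$ of $H_n(\underline t)$ from the family provided by Proposition~\ref{pr:randhyptimes}, arranged so that the family of image balls $\{B(f_{\underline t}^n(x_i),\delta_1)\}_i$ has bounded multiplicity of intersection: at most some integer $N=N(\dim M)$, using a Besicovitch-type covering argument on $f_{\underline t}^n(H_n(\underline t))\subset M$. Because $f_{\underline t}^n\mid V_n(\underline t,x_i)$ is a diffeomorphism onto $B(f_{\underline t}^n(x_i),\delta_1)$, this bounded multiplicity on the image side transfers to bounded multiplicity $N$ on the source side as well.

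Next, on each single piece, I would apply the distortion bound in item~(3) of Proposition~\ref{pr:randhyptimes}. Writing $B_i=B(f_{\underline t}^n(x_i),\delta_1)$ and $V_i=V_n(\underline t,x_i)$, the change of variables formula gives
\begin{align*}
\lambda(B_i)=\int_{V_i}|\det Df_{\underline t}^n|\,d\lambda
\le \bigl(\sup_{V_i}|\det Df_{\underline t}^n|\bigr)\,\lambda(V_i)
\le C_1\bigl(\inf_{V_i}|\det Df_{\underline t}^n|\bigr)\,\lambda(V_i),
\end{align*}
so that $\inf_{V_i}|\det Df_{\underline t}^n|\ge \lambda(B_i)/(C_1\lambda(V_i))$. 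Hence the density of $(f_{\underline t}^n)_\ast(\lambda\mid V_i)$ satisfies, at every $y\in B_i$,
\begin{align*}
\frac{d}{d\lambda}(f_{\underline t}^n)_\ast(\lambda\mid V_i)(y)
=\frac{1}{|\det Df_{\underline t}^n((f_{\underline t}^n\mid V_i)^{-1}(y))|}
\le \frac{C_1^2\,\lambda(V_i)}{\lambda(B_i)}.
\end{align*}
Since $\lambda(B_i)\ge c_0$ for a constant $c_0>0$ depending only on $\delta_1$ and the dimension, this yields a density bound of the form $C_1^2\lambda(V_i)/c_0$ on each piece.

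Finally, I would assemble the pieces: because the $V_i$ cover $H_n(\underline t)$ with multiplicity at most $N$ and the image balls also have multiplicity at most $N$, summing the local density bounds over $i$ gives
\begin{align*}
\frac{d}{d\lambda}(f_{\underline t}^n)_\ast(\lambda\mid H_n(\underline t))(y)
\le \sum_{i:\,y\in B_i}\frac{C_1^2\lambda(V_i)}{c_0}
\le \frac{C_1^2 N^2 \lambda(M)}{c_0}=:C_2,
\end{align*}
which is independent of $n$ and $\underline t$. The main subtlety is the covering step: one must extract a subfamily from $\{V_n(\underline t,x)\}_{x\in H_n(\underline t)}$ with controlled multiplicity, which requires invoking a Besicovitch-type lemma on $M$ (or, equivalently, on the image side since $M=\T^n$ has bounded geometry) and then pulling back via the diffeomorphisms $f_{\underline t}^n\mid V_i$. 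Beyond this combinatorial step the argument is just a routine application of bounded distortion, and the final constant $C_2$ depends only on $C_1$, $\delta_1$, the dimension, and $\lambda(M)$, so it is uniform in $n$ and $\underline t$ as claimed.
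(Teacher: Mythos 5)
Your overall plan---cover $H_n(\underline t)$ by hyperbolic pre-balls, use bounded distortion on each piece, and sum with bounded multiplicity---is the right heuristic, but the covering step has a genuine gap. You apply Besicovitch to the image balls $\{B(f_{\underline t}^n(x),\delta_1)\}_{x\in H_n(\underline t)}$, all of the same radius $\delta_1$, obtaining a subfamily $\{B_i\}$ that covers the \emph{image set} $f_{\underline t}^n(H_n(\underline t))$ with bounded multiplicity. This does not produce a covering of $H_n(\underline t)$ by the pieces $V_n(\underline t, x_i)$: a point $z\in H_n(\underline t)$ whose image lies in the selected ball $B_i$ need not lie in $V_n(\underline t,x_i)$, since $z$ may sit on a \emph{different} inverse branch of $f_{\underline t}^n$ over $B_i$ than the one through $x_i$. (Your remark that source-side multiplicity is bounded by image-side multiplicity is fine; the missing piece is that the selected $V_i$'s actually cover $H_n(\underline t)$.) A Besicovitch argument directly on the source side does not rescue this, because by Lemma~\ref{le:innerball} the pre-balls $V_n(\underline t,x)$ are sandwiched between balls of radius $\delta_1\omega^{-n}$ and $\delta_1\sigma^{n/2}$, whose ratio grows like $(\omega\sigma^{1/2})^n$; a Vitali or Besicovitch multiplicity bound would therefore depend on $n$, destroying the uniformity of $C_2$.

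The argument in \cite[Proposition 5.2]{AA03}, which this paper cites without reproving, sidesteps covering lemmas entirely. Fix any $y\in M$ and put $B=B(y,\delta_1/2)$. Each $x\in H_n(\underline t)$ with $f_{\underline t}^n(x)\in B$ determines, via $(f_{\underline t}^n\mid V_n(\underline t,x))^{-1}$, an inverse branch $\phi$ of $f_{\underline t}^n$ defined on all of $B$ (possible since $B\subset B(f_{\underline t}^n(x),\delta_1)$). Two such branches either coincide on all of $B$ (uniqueness of local inverses on the connected set $B$) or have disjoint images. Hence $\{x\in H_n(\underline t): f_{\underline t}^n(x)\in B\}\subset\bigcup_\phi \phi(B)$ with the $\phi(B)$ pairwise disjoint. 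Bounded distortion gives $\lambda(\phi(A))\le C_1\,\lambda(\phi(B))\,\lambda(A)/\lambda(B)$ for $A\subset B$, and summing with $\sum_\phi\lambda(\phi(B))\le\lambda(M)$ yields a density bound $C_1\lambda(M)/\lambda(B_{\delta_1/2})$ uniform in $n$ and $\underline t$. This replaces your combinatorial multiplicity control with exact disjointness of inverse branch images, which is what makes the constant $C_2$ come out independent of $n$. (A smaller point: your local density estimate should read $C_1\lambda(V_i)/\lambda(B_i)$ rather than $C_1^2\lambda(V_i)/\lambda(B_i)$; you use the distortion constant twice where once suffices. This looseness is harmless, but the covering gap above is not.)
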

Hence we have $\frac{d\nu_n^\epsilon}{d\lambda}\leq C_2$ for
every $n\geq 0$ and small $\epsilon>0$. We now control the
density of the measures $\eta_n^\epsilon$ so that we ensure
the absolute continuity of the weak$^*$ accumulation point
of $\mu^\epsilon$ when $\epsilon\searrow0$.
\begin{proposition}\cite[Proposition 5.3]{AA03}\label{pr:dens2}
  Given $\zeta>0$, there is $C_3(\zeta)>0$ such that for
  every $n\geq 0$ and $\epsilon>0$ we may bound
  $\eta_n^{\epsilon}$ by the sum of two measures
  $\eta_n^{\epsilon} \leq
  \omega^{\:\epsilon}+\rho^{\:\epsilon}$ satisfying
  $\frac{d\omega^{\:\epsilon}}{d\lambda}\leq C_3(\zeta)$ and
  $\rho^{\:\epsilon}(M)<\zeta.$
\end{proposition}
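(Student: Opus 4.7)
The plan is to split $\eta_n^\epsilon$ into a piece $\omega^\epsilon$ with bounded density and a piece $\rho^\epsilon$ with small total mass by cutting the double sum at a level $K=K(\zeta)$ on the gap $k$ between consecutive hyperbolic times, using the uniform $L^1$-tail of $\hat h$ from \eqref{c.unif} to control what is discarded. The first step would be to swap the order of summation via $\sum_{k\ge 2}\sum_{j=1}^{k-1}=\sum_{j\ge 1}\sum_{k>j}$ and the identity $\bigcup_{k>j}R_{n,k}(\un t)=E_{n,j}(\un t):=\{x\in H_n(\un t):\hat h(\sigma^n\un t,f_{\un t}^n(x))>j\}$, rewriting
\begin{align*}
\eta_n^\epsilon
=
\sum_{j\ge 1}\int (f_{\un t}^{n+j})_{*}\bigl(\lambda\mid E_{n,j}(\un t)\bigr)\,d\theta_\epsilon^\N(\un t),
\end{align*}
and then declaring $\rho^\epsilon$ to be the tail $j>K$ and $\omega^\epsilon$ the initial block $1\le j\le K$ (with a refinement below).

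For the mass of the tail, Proposition~\ref{pr:dens1} gives the density bound $d((f_{\un t}^n)_{*}(\lambda\mid H_n(\un t)))/d\lambda\le C_2$, so $\lambda(E_{n,j}(\un t))\le C_2\,\lambda\{y:\hat h(\sigma^n\un t,y)>j\}$; applying shift-invariance of $\theta_\epsilon^\N$ and summing,
\begin{align*}
\rho^\epsilon(M)\le C_2\sum_{j>K}(\theta_\epsilon^\N\times\lambda)(\hat h>j)\le C_2\sum_{k>K}k\cdot(\theta_\epsilon^\N\times\lambda)(\hat h=k),
\end{align*}
which is smaller than $\zeta$ for $K=K(\zeta)$ large enough, uniformly in $\epsilon$, by the uniform $L^1$-tail property of $\hat h$.

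For the block $\omega^\epsilon$ there are at most $K$ summands. In each of them, Proposition~\ref{pr:randhyptimes} gives bounded distortion for the first $n$ iterates on $E_{n,j}(\un t)\subset H_n(\un t)$, producing an intermediate measure of density at most $C_2$; the remaining $j\le K$ iterates of $f_{\sigma^n\un t}$ raise the density by at most $(\inf_{y\in U^c}|\det Df(y)|)^{-j}$, uniformly bounded once we restrict to orbits that avoid a fixed neighborhood $U$ of $\mathcal C$ throughout the intermediate steps. On this good part the density is at most $C_3(\zeta)$, depending only on the chosen $U$ and $K$. The bad complement --- orbits visiting $U$ during the intermediate iterates --- is moved into $\rho^\epsilon$, and its Lebesgue measure is controlled by slow recurrence to $\mathcal C$ (transferring to random orbits via Proposition~\ref{pr:RSlow}) at the cost of shrinking $U$ enough to keep this extra mass below $\zeta$.

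The main obstacle is the density control for $\omega^\epsilon$: since $|\det Df|$ can be arbitrarily small near $\mathcal C$, a naive push-forward density estimate over the $j\le K$ intermediate iterates is unbounded. Resolving this requires a careful balance between the radius of $U$ (which determines $C_3(\zeta)$ through the Jacobian lower bound from (S1)) and the extra mass absorbed into $\rho^\epsilon$ from orbits that hit $U$; both can be arranged using (S1) and the slow-recurrence estimates of Proposition~\ref{pr:RSlow}.
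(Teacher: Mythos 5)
Your overall strategy is the right one: cut the double sum at a level $K=K(\zeta)$, send the tail into $\rho^\epsilon$, and use Proposition~\ref{pr:dens1} together with the uniform $L^1$-tail of $\hat h$ from \eqref{c.unif} to make that tail uniformly small. The computation of the tail mass (swap of summation, shift-invariance of $\theta_\epsilon^\N$, the density bound $C_2$ on $(f^n_{\un t})_*(\lambda\mid H_n(\un t))$) is fine.

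The gap is in the density bound for the finite block, and it is a real one. You try to control the Jacobian of the extra $j\le K$ iterates by restricting to orbits that stay out of a fixed neighborhood $U$ of $\SC$, and to absorb the complement into $\rho^\epsilon$ ``using slow recurrence''. This does not close, for two reasons. First, Proposition~\ref{pr:RSlow} is a pointwise statement about asymptotic Birkhoff averages; it does not give a measure bound, uniform in $\epsilon$, on the set of points whose first $K$ random iterates enter $U$. Second, even heuristically, the two requirements pull in opposite directions: the density bound you invoke is $C_2\bigl(\inf_{U^c}|\det Df|\bigr)^{-j}$, and by (S1) one has $\inf_{U^c}|\det Df|\sim r(U)^{\beta\cdot\dim M}$ as $U$ shrinks, while the mass you want to discard is of order $\lambda(U)\sim r(U)^{\mathrm{codim}\,\SC}$; tracking the push-forward density through the $K$ intermediate steps produces terms of the form $(d_f/c_U)^{K-1}\lambda(U)$, which blow up as $r(U)\to 0$ once $K$ exceeds roughly $1+(\mathrm{codim}\,\SC)/(\beta\,\dim M)$. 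Since $K=K(\zeta)$ must be taken large when $\zeta$ is small while $\beta$ is fixed by the map, this balance cannot be arranged.

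What is missing is that no extra cutting near $\SC$ is needed at all, because the distance control is already built into $R_{n,k}$. If $x\in R_{n,k}(\un t)$ with $k\le K$, then $y:=f^n_{\un t}(x)$ has $k$ as a $(\sigma,\delta)$-hyperbolic time for $(\sigma^n\un t,y)$, and Definition~\ref{def:hyptimesrand} (with $l=k-i$) gives
\begin{align*}
 d\bigl(f^i_{\sigma^n\un t}(y),\SC\bigr)\;\ge\;\min\{\delta,\sigma^{bk}\}\;\ge\;\min\{\delta,\sigma^{bK}\},
 \qquad 0\le i<k.
\end{align*}
Hence, along the entire transit $0\le j<k\le K$, the orbit stays a definite distance (depending only on $K$, $\sigma$, $\delta$, $b$) from $\SC$, and (S1) gives a uniform lower bound $c_K>0$ on $|\det Df|$ at these iterates. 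Composing with Proposition~\ref{pr:dens1}, the density of $(f^{n+j}_{\un t})_*(\lambda\mid R_{n,k}(\un t))$ is at most $C_2(d_f/c_K)^K=:C_3(\zeta)$, where $d_f$ bounds the number of preimages. So it is cleanest to keep the original variable $k$, set $\omega^\epsilon=\sum_{2\le k\le K}\sum_{j<k}(\cdots)$ and $\rho^\epsilon=\sum_{k>K}\sum_{j<k}(\cdots)$; your reformulation in terms of $E_{n,j}$ loses exactly the hyperbolic-time structure that furnishes this bound, and is what forced you into the unworkable $U$-cutting.
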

It follows from Propositions \ref{pr:dens1} and
\ref{pr:dens2} that the weak$^*$ accumulation points $\mu^0$
of $\mu^\ep$ when $\ep\searrow0$ cannot have singular part,
and so are absolutely continuous with respect to
$\lambda$. Moreover, from Remark~\ref{re:accinvariant} we
have that the weak$^*$ accumulation points $\mu^0$ of a
family of stationary measures are always $f$-invariant
measures.

From the properties of non-uniformly expanding maps stated
in Theorem~\ref{thm:abv}, we conclude that $\mu^0$ is
a convex linear combination of finitely many physical
measures of $f$. This proves stochastic stability under
adapted random perturbations.

In our setting, where $f$ is transitive, we
have a unique physical measure $\mu$ for $f$, thus
$\mu^0=\mu$.


\def\cprime{$'$}


\end{document}